\documentclass[twoside,11pt]{entics} 
\usepackage{enticsmacro}
\usepackage{graphicx}
\usepackage[all]{xy}

\usepackage{amsmath,amssymb}
\usepackage{stmaryrd,mathpartir}
\usepackage{graphicx,graphbox,hyperref}
\usepackage{tikz-cd}
\usepackage{multicol}
\allowdisplaybreaks

\usepackage{setspace}
\doublespacing


\newcommand{\ip}{\rotatebox[origin=c]{180}{$\pi$}}
\newcommand{\from}{\leftarrow}
\newcommand{\C}{\mathbb{C}}

\newcommand{\A}{\mathbb{A}}
\newcommand{\s}{\mathsf{succ}}
\newcommand{\z}{\mathsf{zero}}
\newcommand{\p}{\mathsf{pred}}

\makeatletter
\providecommand{\leftsquigarrow}{%
  \mathrel{\mathpalette\reflect@squig\relax}%
}
\newcommand{\reflect@squig}[2]{%
  \reflectbox{$\m@th#1\rightsquigarrow$}%
}
\makeatother

\sloppy

\volume{NN}			
\begin{document}
\begin{frontmatter}
  \title{Elgot Categories and Abacus Programs\thanksref{ALL}} 						
 \thanks[ALL]{This work was supported by the Estonian Research Council grant PRG2764, and by the European Union under Grant Agreement No. 101087529. The author would like to thank JS Lemay and Tarmo Uustalu for their interest in and encouragement of the present work, and to acknowlegde useful conversations with Filippo Bonchi in the early stages of its formation.}   
  \author{Chad Nester\thanksref{a}\thanksref{myemail}}	
  \address[a]{Institute of Computer Science\\ University of Tartu\\				
    Tartu, Estonia}  							
  \thanks[myemail]{Email: \href{mailto:nester@ut.ee} {\texttt{\normalshape
        nester@ut.ee}}} 
\begin{abstract} 
  We introduce Elgot categories, a sort of distributive monoidal category with additional structure in which the partial recursive functions are representable. Moreover, we construct an initial Elgot category, the morphisms of which coincide with a lightly modified version of Lambek's abacus programs. The partial functions that are strongly representable in this initial Elgot category are precisely the partial recursive ones.
\end{abstract}
\begin{keyword}
  Please list keywords from your paper here, separated by commas.
\end{keyword}
\end{frontmatter}

\section*{Phil Scott}
This article is dedicated to the memory of Phil Scott (1947--2023).
\\\\
I was introduced to Phil in the summer of 2012 at a workshop in Halifax. At the time I was an undergraduate student, and do not remember understanding anything that took place. Nonetheless, I was made to feel welcome and my interest was warmly encouraged. A few years later, I would in turn be introduced to categorical logic by "Lambek and Scott"~\cite{Lambek1986}. Phil's book was the first material on category theory that I really took to, and it has undeniably shaped my view of the subject.

I would see Phil again from time to time, particularly during my time in Ottawa, where he lived, and in Edinburgh, where he would often visit. He was always kind, often smiling, and invariably good conversation. I think that Phil would have been interested in the work presented here, and would have very much liked to pick his brain on the subject. 

\section{Introduction}\label{sec:intro}

This paper concerns the categorical representation of the partial recursive functions. More precisely, we are interested in assumptions on distributive monoidal categories which ensure the representability of all partial recursive functions therein. We build directly on the work of Plotkin~\cite{Plotkin2013}, which supplies one such set of assumptions. Specifically, it suffices to assume that the category in question has a weak (left or right) natural numbers object $z : I \to N \from N : s$ in the sense of Par\'{e} and Rom\'{a}n~\cite{Pare1989} with the additional property that the morphism $[z,s] : I+N \to N$ is invertible, and that this inverse is a weakly final coalgebra of the functor $I + -$. 

The first contribution of this paper is another set of assumptions, of a rather different flavour, on a distributive monoidal category that ensure it represents the partial recursive functions. Specifically, we consider distributive monoidal categories whose additive monoidal structure is traced, and which additionally contain a distinguished object $N$ that is required to be isomorphic to $I+N$. We show that in any such category the conditions given by Plotkin are satisfied, meaning in particular that the partial recursive functions are representable. We propose to call categories like this in which the trace operator satisfies the uniformity axiom \emph{Elgot categories}.

The second contribution of this paper is the construction of a category whose morphisms correspond to abacus programs in the sense of Lambek~\cite{Lambek1961}. This construction is inspired by (and largely adapted from) the work of Bonchi et al.~\cite{Bonchi2023,Bonchi2024} on tape diagrams. We show that this category of abacus programs is an Elgot category, and that it is moreover an initial object in a suitable category of Elgot categories. 

We also obtain that our category of abacus programs \emph{strongly} represents the partial recursive functions. Loosely speaking, while (mere) representability is about being able to \emph{interpret} a given partial function in some category, strong representability is about being able to \emph{define} it in that category (see Section~\ref{sec:representability}). Plotkin's set of assumptions ensuring representability of the partial recursive functions can be extended to ensure their strong representability~\cite{Plotkin2013}. Specifically, it suffices to ask that the numerals $\underline{0} = z$ and $\underline{1} = zs$ are distinct, and that any partial function that is strongly representable in the category in question is necessarily partial recursive. Our category of abacus programs satisfies these extra conditions. Thus, our initial Elgot category strongly represents (so in a sense, generates) the partial recursive functions.

\subsection{Related Work}
That distributive categories model a kind of finitary branching computation has been known for some time (see e.g.,~\cite{Walters1989,Walters1992,Cockett1993,Carboni1993,Khalil1993}). However, the category of partial recursive functions is not distributive because it does not have products~\cite{Cockett2023}. Instead it has restriction products~\cite{Cockett2007}, making it a distributive restriction category in the sense of e.g.,~\cite{Cockett2023}. Every distributive restriction category is distributive monoidal, and the results of this paper can almost certainly be specialised to Elgot categories whose underlying distributive monoidal category is in fact a distributive restriction category. We leave this for future work (see Section~\ref{sec:conclusions}), and restrict our attention to the more general setting herein.

Traced monoidal categories and their string diagrams, introduced in~\cite{Joyal1996}, have become a standard tool in the categorical modelling of systems with feedback. In particular, a cocartesian monoidal category admits a parameterised iteration operator if and only if it is traced~\cite{Hasegawa1997}. This kind of iteration operator would seem to be fundamental to computer science, and has been studied extensively (see e.g., ~\cite{Elgot1975,Bloom1993,Simpson2000,Stefanescu2000,Adamek2010,Goncharov2022}). 

While distributive monoidal categories do not seem to have been much studied in and of themselves --- with the notable exception of~\cite{Labella2003} --- they are in particular rig categories\footnote{We note that what we call \emph{rig categories} are called \emph{tight bimonoidal categories} in the work of Johnson and Yau~\cite{JohnsonYau2024}.}, which are the subject of a good amount of existing work (see e.g.,~\cite{Laplaza1971,JohnsonYau2024}). Tape diagrams, which are a string-diagrammatic calculus for rig categories~\cite{Bonchi2023,Bonchi2024}, are particularly relevant to our development. The construction of the initial Elgot category of Abacus programs presented here is largely adapted from that of the rig category of tape diagrams (see Section~\ref{sec:abacus-programs} for a discussion of the differences), and the present work has been inspired in part by the evident visual similarity between abacus programs and tape diagrams.

For a historical discussion of the notion of representability see Part III of~\cite{Lambek1986}, which is itself concerned with the representability of (total) recursive funcitions in various categorical structures. The specific notion of (strong) representability used in this paper is from~\cite{Plotkin2013}, and we use the results of~\cite{Plotkin2013} repeatedly in our development. Natural numbers objects first appear in~\cite{Lawvere1964}, although the more general notion of natural numbers object in a monoidal category appears significantly later in~\cite{Pare1989}. A recent survey of these ideas and results concering them can be found in Section 4 of~\cite{Hofstra2021}.

Finally, there is a line of work in the field of reversible computation which is similar in spirit to our work. Namely, various sorts of (initial) traced rig category have been proposed as reversible programming languages~\cite{Bowman2011,James2012,Kaarsgaard2019,Chen2021,Kaarsgaard2021}.

\subsection{Synopsis}

Section~\ref{sec:preliminaries} recalls the notions of structured monoidal category that will be necessary for our development. Section~\ref{sec:representability} contains the necessary material on natural numbers algebras and (strong) representability of partial recursive functions. Section~\ref{sec:elgot-categories} contains the definition of Elgot category, and establishes that every such category represents the partial recursive functions (Theorem~\ref{thm:elgot-representable}). Section~\ref{sec:abacus-programs} introduces Lambek's abacus programs, constructs a category of abacus programs (Definition~\ref{def:abacus-programs}), establishes that it is an initial Elgot category (Theorem~\ref{thm:abacus-elgot} and Theorem~\ref{thm:abacus-initial}, and that it strongly represents all (and only) the partial recursive functions (Theorem~\ref{thm:abacus-strong-representability} and Corollary~\ref{cor:final}). The axioms for rig categories in both the usual and right-strict case are enumerated in Appendix~\ref{sec:distributive-monoidal-categories}, and Appendix~\ref{sec:proof-appendix} contains the proof of Theorem~\ref{thm:abacus-elgot}, which is long, tedious, and straightforward.
  
\section{Traced and Distributive Monoidal Categories}\label{sec:preliminaries}
In this section we set up the rest of our development by recalling the required notions of structured monoidal category: cocartesian monoidal categories (Definition~\ref{def:cocartesian-monoidal-category}), (uniform) traced monoidal categories (Definition~\ref{def:traced-monoidal-category} and Definition~\ref{def:uniform-traced-monoidal-category}), and distributive monoidal categories (Definition~\ref{def:distributive-monoidal-category}). 

We assume the reader to have some familiarity with category theory, and in particular with monoidal categories and their string diagrams. Some elementary knowledge of the partial recursive functions would also be helpful, but is not, strictly speaking, required to read this paper. We consider only strict monoidal category structure. In particular, this means that the additive and multiplicative monoidal structures of our distributive monoidal categories are both strict. Herein, ``monoidal category'' means ``strict monoidal category''. That said, one imagines that suitable analogues of the results presented here hold in the more general setting. We will write composition in diagrammatic order, meaning that given $f : A \to B$ and $g : B \to C$ we write $fg : A \to C$ and \emph{not} $gf : A \to C$. String diagrams are to be read top-to bottom.

We begin with the notion of cocartesian monoidal category:
\begin{definition}\label{def:cocartesian-monoidal-category}
  A \emph{cocartesian monoidal category} is a tuple $((\C,+,0,\sigma^+),(\mu,\eta))$ such that:
  \begin{itemize}
  \item $(\C,+,0,\sigma^+)$ is a symmetric monoidal category.
  \item $\mu$ and $\eta$ are monoidal natural transformations with components as in $\mu_A : A + A \to A$ and $\eta_A : 0 \to A$. 
  \item For all objects $A$ of $\C$, $(A,\mu_A,\eta_A)$ is a commutative monoid in $\C$. 
  \end{itemize}
\end{definition}

More explicitly, a symmetric monoidal category is cocartesian in case we have morphisms $\mu_A : A + A \to A$ and $\eta_A : 0 \to A$ for each object $A$ of $\C$ such that for all $f : A \to B$ we have:
\begin{mathpar}
  \mu_Af = (f+f)\mu_B

  \eta_Af = \eta_B
\end{mathpar}
which give that $\mu$ and $\eta$ are natural transformations of the appropriate type, along with:
\begin{mathpar}
  \mu_I = 1_I

  \mu_{A+B} = (1_A + \sigma^+_{B,A} + 1_B)(\mu_A + \mu_B)
  
  \eta_I = 1_I

  \eta_{A + B} = \eta_A + \eta_B
\end{mathpar}
which give that $\mu$ and $\eta$ are monoidal natural transformations, and finally for each $(A,\mu_A,\eta_A)$ to be a commutative monoid means that we have:
\begin{mathpar}
  (\mu_A + 1_A)\mu_A = (1_A + \mu_A)\mu_A

  \sigma^+_{A,A}\mu_A = \mu_A

  (1_A + \eta_A)\mu_A
\end{mathpar}

Morphisms of cocartesian monoidal categories may be represented string-diagrammatically by extending the diagrams for symmetric monoidal categories with the following components corresponding the $\mu$ and $\eta$:
\begin{mathpar}
    \mu
  \hspace{0.3cm}
  \leftrightsquigarrow
  \hspace{0.3cm}
  \includegraphics[height=1cm,align=c]{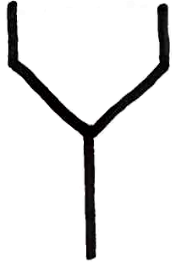}

  \eta
  \hspace{0.3cm}
  \leftrightsquigarrow
  \hspace{0.3cm}
  \includegraphics[height=1cm,align=c]{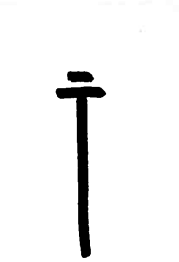}
\end{mathpar}

In a cocartesian monoidal category, $A+B$ is always a coproduct of $A$ and $B$, with the coparing $[f,g] : A+B \to C$ of $f : A \to C$ and $g : B \to C$ given by $(f+g)\mu_C$. The coprojections are given by $1_A + \eta_B : A \to A + B$ and $\eta_A + 1_B : B \to A + B$. Moreover, the unit object is necessarily initial, and so every cocartesian monoidal category has finite coproducts. In fact, asking that the tensor product and unit object of a symmetric monoidal category carry finite coproduct structure in this way is equivalent to asking that it be cocartesian monoidal~\cite{Fox1976}. 

The next notion we require is that of a traced monoidal category:
\begin{definition}\label{def:traced-monoidal-category}
  A \emph{traced monoidal category} is a tuple $((\C,\oplus,0,\sigma^\oplus),\mathsf{Tr})$ such that:
  \begin{itemize}
  \item $(\C,\oplus,0,\sigma^\oplus)$ is a symmetric monoidal category.
  \item $\mathsf{Tr}$ is a family of operations $\mathsf{Tr}^C_{A,B} : \C(A \oplus C, B \oplus C) \to \C(A,B)$ satisfying the following axioms:
  \begin{itemize}
  \item[] \textbf{[TR1]} $\mathsf{Tr}^I_{A,B}(f) = f$
  \item[] \textbf{[TR2]} $\mathsf{Tr}^{C \oplus D}_{A,B} = \mathsf{Tr}^C_{A,B}(\mathsf{Tr}^D_{A \oplus C,B \oplus C}(f)$
  \item[] \textbf{[TR3]} $g\mathsf{Tr}^C_{A,B}(f)h = \mathsf{Tr}^C_{A',B'}((g \oplus 1_C)f(h \oplus 1_C))$
  \item[] \textbf{[TR4]} $1_D \oplus \mathsf{Tr}^C_{A,B}(f) = \mathsf{Tr}^C_{D \oplus A,D \oplus B}(1_D \oplus f)$
  \item[] \textbf{[TR5]} $\mathsf{Tr}^C_{A,B}((1_A \oplus h)f) = \mathsf{Tr}^D_{A,B}(f(1_B \oplus h))$
  \item[] \textbf{[TR6]} $\mathsf{Tr}^A_{A,A}(\sigma^\oplus_{A,A}) = 1_A$
  \end{itemize}
  \end{itemize}
\end{definition}
Morphisms of traced monoidal categories admit a helpful string-diagrammatic representation as in:
\begin{mathpar}
  \mathsf{Tr}^C_{A,B}(f)
  \hspace{0.3cm}
  \leftrightsquigarrow
  \hspace{0.3cm}
  \includegraphics[height=1.4cm,align=c]{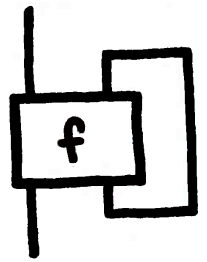}
\end{mathpar}

Some trace operators are additionally \emph{uniform}\footnote{We note that a trace operator is typically (see e.g.,~\cite{Selinger2010,Hasegawa2003}) said to be uniform with repsect to a class of morphisms $h$, called \emph{strict}, for which the uniformity axiom holds. In this paper we work with a more demanding notion of uniformity in which all morphsims are required to be strict.} in the following sense:
\begin{definition}\label{def:uniform-traced-monoidal-category}
  A traced monoidal category $((\C,\oplus,0,\sigma^\oplus),\mathsf{Tr})$ is called \emph{uniform} in case for all $f : A \oplus C \to B \oplus C$ and $g : A \oplus D \to B \oplus D$, if there exists $h : C \to D$ such that $(1_A \oplus h)f = g(1_B \oplus h)$ then we have $\mathsf{Tr}^C_{A,B}(f) = \mathsf{Tr}^D_{A,B}(g)$. 
\end{definition}

Hasegawa has shown~\cite{Hasegawa1997} that a cocartesian monoidal category is traced if and only if it admits a parameterised iteration operator in the sense of e.g., Simpson and Plotkin~\cite{Simpson2000}. Specifically, defining $(-)^\dagger : \C(A,X+A) \to \C(A,X)$ by $f^\dagger = \mathsf{Tr}^A_{A,X}(\mu_Af)$ makes $(-)^\dagger$ into a parameterised iteration operator, where the fixed point property $f^\dagger = f[1_X,f^\dagger]$ holds as in:
\[
\includegraphics[height=2.7cm,align=c]{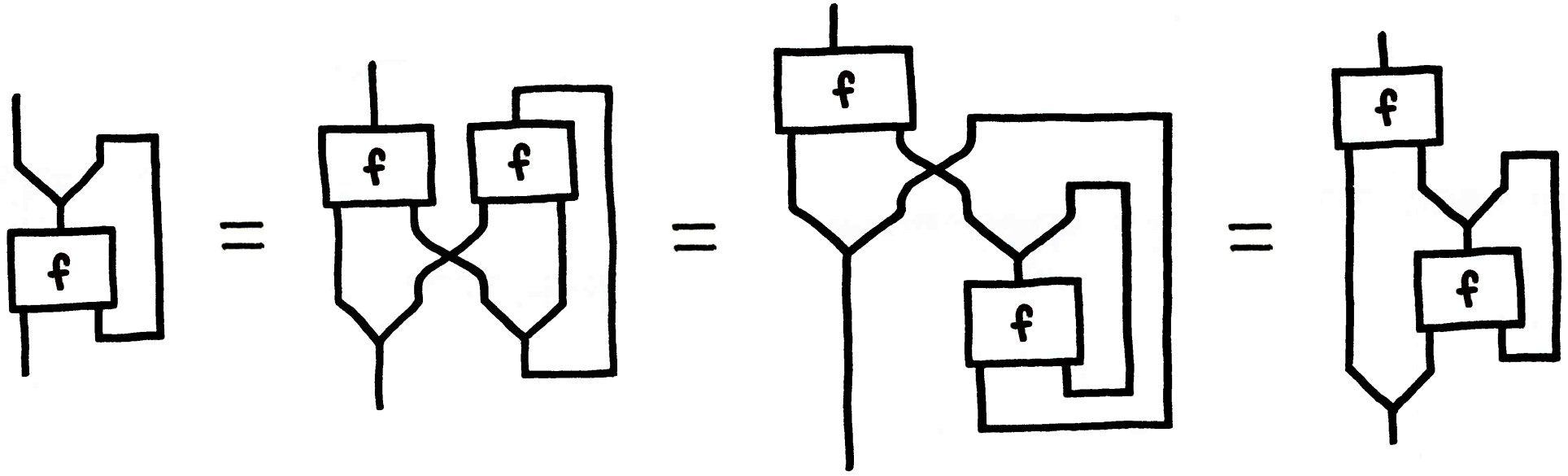}
\]

A distributive monoidal category is a category with two tensor product structures, one typically called \emph{multiplicative} and the other \emph{additive}, such that the multiplicative structure distributes over the additive one in the following sense:
\begin{definition}\label{def:distributive-monoidal-category}
  A \emph{distributive monoidal category} is a tuple $(\C,(+,0,\sigma^+),(\otimes,I),(\mu,\eta),(\lambda^\bullet,\rho^\bullet),(\delta^l,\delta^r))$ such that:
  \begin{itemize}
  \item $((\C,+,0,\sigma^+),(\mu,\eta))$ is a cocartesian monoidal category, which we refer to as the \emph{additive} monoidal structure on $\C$.
  \item $(\C,\otimes,I)$ is a monoidal category, which we refer to as the \emph{multiplicative} monoidal structure on $\C$.
  \item $\lambda^\bullet$ and $\rho^\bullet$ are natural isomorphisms with components $\lambda^\bullet_A : 0 \otimes A \to 0$ and $\rho^\bullet_A : A \otimes 0 \to 0$ called the left and right \emph{annihilator}, respectively.
  \item $\delta^l$ and $\delta^r$ are natural isomorphisms with components
    \begin{mathpar}
      \delta^l_{A,B,C} : A \otimes (B + C) \to (A \otimes B) + (A \otimes C)

      \delta^r_{A,B,C} : (A + B) \otimes C \to (A \otimes C) + (B \otimes C)
    \end{mathpar}
    called the left and right \emph{distributor}, respectively.
  \item The coherence axioms \textbf{[B1]}-\textbf{[B22]} (See Appendix~\ref{sec:distributive-monoidal-categories}) are satisfied.
  \end{itemize}
\end{definition}

We recall a common notational convenience: when referring to the objects of a distributive monoidal category one often denotes the multiplicative tensor product $\otimes$ by juxtaposition. For example, $A \otimes B$ becomes $AB$, $A \otimes B \otimes C$ becomes $ABC$, and $A \otimes (B + (C \otimes D))$ becomes $A(B + CD)$.

The notion of distributive monoidal category may be partially strictified. In particular, we say that a distributive monoidal category is \emph{right-strict} in case $\lambda^\bullet$, $\rho^\bullet$, and $\delta^r$ are identity natural transformations. The effect of these assumptions on the axioms of distributive monoidal category is treated explicitly in Appendix~\ref{sec:distributive-monoidal-categories}. There is of course a dual notion of \emph{left-strict distributive monoidal category}, although here we will work exclusively with the right-strict version.

While in this paper we privledge the point of view occupied by Definition~\ref{def:distributive-monoidal-category}, which emphasizes the rig category structure of distributive monoidal categories, it is worth mentioning that distributive monoidal categories admit a much simpler axiomatization: For any category $\C$ with monoidal structure $(\C,\otimes,I)$ and cocartesian monoidal structure $((\C,+,0,\sigma^+),(\mu,\eta))$ we may construct morphisms:
\begin{mathpar}
  \mu^l_{A,B,C} = \left[1_A \otimes \ip_0^{B,C}, 1_A \otimes \ip_1^{B,C}\right] : (A \otimes B) + (A \otimes C) \to A \otimes (B+C)

  \mu^r_{A,B,C} = \left[\ip_0^{A,B} \otimes 1_C,\ip_1^{A,B} \otimes 1_C\right] : (A \otimes C) + (B \otimes C) \to (A+B) \otimes C
\end{mathpar}
It so happens that asking for an inverse to every such map gets us most of the way to having a distributive monoidal category. Specifically, we have:
\begin{lemma}[after~\cite{Labella2003}]
  Let $((\C,+,0,\sigma^+),(\mu,\eta))$ be a cocartesian monoidal category, and let $(\C,\otimes,I)$ be a monoidal category. Suppose that $\lambda^\bullet$, $\rho^\bullet$, $\delta^l$, and $\delta^r$ are indexed families of morphisms as in Definition~\ref{def:distributive-monoidal-category}. Then $(\C,(+,0,\sigma^+),(\otimes,I),(\mu,\eta),(\lambda^\bullet,\rho^\bullet),(\delta^l,\delta^r))$ is a distributive monoidal category if and only if:
  \begin{itemize}
  \item Each $\lambda^\bullet_A : 0 \otimes A \to 0$ is the inverse of the morphism $\eta_{0 \otimes A} : 0 \to 0 \otimes A$ and, dually, each $\rho^\bullet_A : A \otimes 0 \to 0$ is the inverse of the morphism $\eta_{A \otimes 0} : 0 \to A \otimes 0$.
  \item Each $\delta^l_{A,B,C}$ is the inverse of $\mu^l_{A,B,C}$ and, dually, each $\delta^r_{A,B,C}$ is the inverse of $\mu^r_{A,B,C}$.
  \end{itemize}
\end{lemma}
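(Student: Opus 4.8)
The plan is to prove the equivalence by exploiting the fact that the additive structure is a genuine coproduct: the maps $\mu^l$, $\mu^r$ and the relevant instances of $\eta$ are canonical morphisms determined by the couniversal property, so their inverses, when they exist, are \emph{forced} to be $\delta^l$, $\delta^r$, $\lambda^\bullet$, $\rho^\bullet$, and the entire coherence package [B1]--[B22] reduces to bifunctoriality of $\otimes$ together with the coproduct laws. I treat the two directions separately, and in each the workhorse is that parallel morphisms out of a coproduct agree as soon as they agree after precomposition with the coprojections $\ip_0,\ip_1$.

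For the forward direction, assume $\C$ is a distributive monoidal category. Among the axioms [B1]--[B22] are the compatibility conditions asserting that the distributor respects the coprojections, i.e. $(1_A \otimes \ip_0^{B,C})\,\delta^l_{A,B,C} = \ip_0^{A\otimes B,\, A\otimes C}$ and likewise for $\ip_1$. Combining these with the copairing law $\ip_i^{A\otimes B,\,A\otimes C}\,\mu^l_{A,B,C} = 1_A \otimes \ip_i^{B,C}$, I compute $\ip_i^{A\otimes B,\,A\otimes C}\,(\mu^l_{A,B,C}\,\delta^l_{A,B,C}) = \ip_i^{A\otimes B,\,A\otimes C}$ for $i \in \{0,1\}$, so by the couniversal property $\mu^l_{A,B,C}\,\delta^l_{A,B,C} = 1$. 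Since $\delta^l$ is assumed invertible, this already forces $\delta^l_{A,B,C} = (\mu^l_{A,B,C})^{-1}$, and hence also $\delta^l_{A,B,C}\,\mu^l_{A,B,C} = 1$. The argument for $\delta^r$ is symmetric, and for the annihilators I use that $0$ is initial, so that $\eta_{0 \otimes A}$ is the unique map $0 \to 0 \otimes A$ and the corresponding coherence axiom identifies $\lambda^\bullet_A$ as its two-sided inverse, with $\rho^\bullet$ handled dually.

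For the backward direction, I first note that naturality is automatic: $\mu^l$, $\mu^r$ and $\eta$ are natural transformations built from coprojections, identities and $\otimes$, so if every component is invertible then the pointwise inverses $\delta^l$, $\delta^r$, $\lambda^\bullet$, $\rho^\bullet$ are again natural. Setting $\delta^l := (\mu^l)^{-1}$, the coprojection-compatibility $(1_A \otimes \ip_i^{B,C})\,\delta^l_{A,B,C} = \ip_i^{A\otimes B,\,A\otimes C}$ holds by construction, via the same copairing computation run in reverse. The remaining coherence axioms are then verified by a single uniform device: each is an equation between parallel morphisms whose common domain is (an iterate of) a coproduct, so by the couniversal property it suffices to precompose with coprojections, and on each coprojection both sides collapse --- using the compatibility just established together with the bifunctoriality of $\otimes$ and the commutative-monoid laws of the cocartesian structure --- to the same morphism. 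The annihilator and unit axioms are handled analogously, replacing the binary couniversal property with the initiality of $0$.

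The conceptual crux --- and the only genuinely interesting point --- is the recognition that distributivity here is a \emph{property} rather than extra structure: once the canonical maps are invertible, the distributor is unique and its coherence is forced. The main obstacle is therefore organizational rather than conceptual, namely carrying the ``check on coprojections'' reduction uniformly across all twenty-two axioms while tracking which ones have domains that are coproducts only after rewriting, and isolating the few that require initiality of $0$ in place of the binary couniversal property. This is precisely the bookkeeping that Labella's treatment~\cite{Labella2003} organizes, and I would follow that structure.
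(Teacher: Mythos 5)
Your proposal is correct in outline, and it is worth noting at the outset that the paper itself contains no proof of this lemma: it is stated ``after Labella'' and deferred entirely to~\cite{Labella2003}, so there is no in-paper argument to diverge from. Your sketch is the standard one, and its architecture --- uniqueness of the (co)universal comparison maps forces the distributors, and then coherence comes for free by checking on coprojections --- is the right way to prove it. One step in your forward direction needs repair, however: the coprojection compatibility $(1_A \otimes \ip_0^{B,C})\,\delta^l_{A,B,C} = \ip_0^{A\otimes B, A\otimes C}$ is \emph{not} literally among \textbf{[B1]}--\textbf{[B22]}, so you cannot simply cite it. It is derivable, and the derivation is short: writing $\ip_0^{B,C} = 1_B + \eta_C$, naturality of $\delta^l$ gives $(1_A \otimes \ip_0^{B,C})\,\delta^l_{A,B,C} = \delta^l_{A,B,0}\,(1_{AB} + (1_A \otimes \eta_C))$; axiom \textbf{[B19]} forces $\delta^l_{A,B,0} = 1_{AB} + (\rho^\bullet_A)^{-1}$, and initiality of $0$ gives $(\rho^\bullet_A)^{-1}(1_A \otimes \eta_C) = \eta_{AC}$, so the right-hand side collapses to $1_{AB} + \eta_{AC} = \ip_0^{A\otimes B, A\otimes C}$. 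The $\ip_1$ case uses \textbf{[B17]} analogously, and \textbf{[B18]}, \textbf{[B20]} handle $\delta^r$. With that supplied, your couniversality computation $\mu^l\delta^l = 1$ together with the assumed invertibility of $\delta^l$ does force $\delta^l = (\mu^l)^{-1}$, exactly as you say.

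Two smaller corrections. First, for the annihilators no coherence axiom is needed at all, contrary to your phrasing: invertibility of $\lambda^\bullet_A$ plus initiality of $0$ already identify $(\lambda^\bullet_A)^{-1}$ with the unique map $\eta_{0 \otimes A}$, and $\eta_{0\otimes A}\lambda^\bullet_A = 1_0$ holds because $0 \to 0$ is unique. Second, in the backward direction your ``domains that are coproducts only after rewriting'' caveat is better handled by comparing inverses rather than precomposing: each of \textbf{[B1]}--\textbf{[B22]} equates two isomorphisms whose inverses are built from the copairings $\mu^l$, $\mu^r$, whiskered coprojections, and components of $\eta$, so it suffices to check the inverses agree on coprojections (or invoke initiality of $0$ for the annihilator axioms \textbf{[B9]}--\textbf{[B20]}), after which uniqueness of inverses yields the axiom; for instance \textbf{[B9]} is immediate since $\lambda^\bullet_0$ and $\rho^\bullet_0$ are both inverse to $\eta_{0\otimes 0}$. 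This is your plan made precise, and it does close; your conceptual takeaway --- that distributivity here is a property rather than structure, with the coherence forced by the universal properties --- is exactly the content of the cited result.
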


\section{Categorical Representability of Partial Recursive Functions}\label{sec:representability}

In this section we review the elementary definitions concerning (strong) representability, and recall the results of Plotkin~\cite{Plotkin2013} concerning the (strong) representability of the partial recursive functions (Theorem~\ref{thm:plotkin-representable} and Theorem~\ref{thm:plotkin-strong-representable}). This section is almost entirely adapted from the work of Plotkin~\cite{Plotkin2013}, although it seems appropriate to mention the work of Lambek and Scott~\cite{Lambek1986} in connection with the categorical representability of numerical functions, and the work of Par\'{e} and Rom\'{a}n~\cite{Pare1989} in connection natural numbers objects in monoidal categories. 

We begin with the notion of natural numbers algebra, the point of which is to allow us to represent the natural numbers in a given monoidal category:
\begin{definition}
  A \emph{natural numbers algebra} in a monoidal category $(\C,\otimes,I)$ is a tuple $(N,z,s)$ such that $N$ is an object of $\C$ and $z : I \to N$, $s : N \to N$ are morphisms of $\C$. Any such natural numbers algebra defines a \emph{numeral} $\underline{n} : I \to N$ for each $n \in \mathbb{N}$, as in:
  \begin{mathpar}
    \underline{0} = z

    \underline{n+1} = \underline{n}s
  \end{mathpar}
  We will say that a natural numbers algebra is \emph{strong} in case $\underline{n} = \underline{m} \Rightarrow n = m$ for all $n,m \in \mathbb{N}$. 
\end{definition}
The notion of representability for numerical partial functions we consider is as follows: 
\begin{definition}
  Suppose $(\C,\otimes,I)$ is a monoidal category, and that $(N,z,s)$ is a natural numbers algebra therein. Given a partial function $f : \mathbb{N}^n \to \mathbb{N}$ and a morphism $\underline{f} : N^n \to N$ of $\C$, we say that \emph{$\underline{f}$ represents $f$ with respect to $(N,z,s)$} in case for all $k_1,\ldots,k_n \in \mathbb{N}$ we have:
  \begin{mathpar}
    f(k_1,\ldots,k_n) = k
    \hspace{0.3cm}
    \Rightarrow
    \hspace{0.3cm}
    (\underline{k_1} \otimes \cdots \otimes \underline{k_n})\underline{f} = \underline{k}
  \end{mathpar}
  Note that since $f$ is a partial function there may not be any such $k \in \mathbb{N}$, in which case $f(k_1,\ldots,k_n)$ is undefined and we require nothing of the morphism $(\underline{k_1} \otimes \cdots \otimes \underline{k_n})\underline{f}$. 
\end{definition}
Strong representability is now a natural strengthening of (mere) representability:
\begin{definition}
  Suppose $(\C,\otimes,I)$ is a monoidal category, and that $(N,z,s)$ is a natural numbers algebra therein. Given a partial function $f : \mathbb{N}^n \to \mathbb{N}$ and a morphism $\underline{f} : N^n \to N$ of $\C$, we say that \emph{$\underline{f}$ strongly represents $f$ with respect to $(N,z,s)$} in case for all $k_1,\ldots,k_n \in \mathbb{N}$ we have:
  \begin{mathpar}
    f(k_1,\ldots,k_n) = k
    \hspace{0.3cm}
    \Leftrightarrow
    \hspace{0.3cm}
    (\underline{k_1} \otimes \cdots \otimes \underline{k_n})\underline{f} = \underline{k}
  \end{mathpar}
  Note that in contrast to mere representability, strong representability requires that $f(k_1,\ldots,k_n)$ is undefined if and only if the morphism $(\underline{k_1} \otimes \cdots \otimes \underline{k_n})\underline{f}$ is not a numeral. That is, $(\underline{k_1} \otimes \cdots \otimes \underline{k_n})\underline{f}$ is not equal to $\underline{k}$ for any $k \in \mathbb{N}$. 
\end{definition}
One way to understand strong representability is as a more general notion of Kleene equality. For partial functions $f,g : \mathbb{N}^n \to \mathbb{N}$ recall that \emph{$f$ is Kleene Equal to $g$}, written $f \simeq g$, in case for all $k_1,\ldots,k_n \in \mathbb{N}$ we have:
\begin{mathpar}
  f(k_1,\ldots,k_n) = k
  \hspace{0.3cm}
  \Leftrightarrow
  \hspace{0.3cm}
  g(k_1,\dots,k_n) = k
\end{mathpar}
Intuitively, $f \simeq g$ says that $f$ is defined precisely when $g$ is defined, in which case they agree.

There is a monoidal category structure $(\mathsf{Par},\times,1)$ on the category $\mathsf{Par}$ of sets and partial functions where $\times$ is the cartesian product of sets and $1 = \{*\}$ is any singleton set. Let $\mathsf{succ} : \mathbb{N} \to \mathbb{N}$ and $\mathsf{zero} : 1 \to \mathbb{N}$ be the successor function and constant zero function, respectively. Then $(\mathbb{N},\mathsf{zero},\mathsf{succ})$ is a natural numbers algebra in $\mathsf{Par}$. Now, given partial funcions $f,g : \mathbb{N}^n \to \mathbb{N}$ we have that $f$ strongly represents $g$ with respect to $(\mathbb{N},\mathsf{zero},\mathsf{succ})$ if and only if $f \simeq g$. Moreover, $f$ (merely) represents $g$ with respect to $(\mathbb{N},\mathsf{zero},\mathsf{succ}$) if and only if $g$ \emph{extends} $f$.

Another way to understand strong representability is in terms of \emph{definition} of partial functions. If $(N,z,s)$ is a strong natural numbers algebra in a monoidal category $(X,\otimes,I)$ then any morphism $f : N^n \to N$ defines a partial function $\overline{f} : \mathbb{N}^n \to \mathbb{N}$ as in:
\begin{mathpar}
  \overline{f}(k_1,\ldots,k_n)
  =
  \begin{cases}
    k \text{ if } (\underline{k_1} \otimes \cdots \otimes \underline{k_n})f = \underline{k} \\
    \uparrow \text{ otherwise}
  \end{cases}
\end{mathpar}
Then $f$ strongly represents $\overline{f}$ with respect to $(N,z,s)$. Note that while any morphism strongly represents at most one partial function, a given partial function may be strongly represented by multiple morphisms.

Before moving on, let us record a convenient fact about strong natural numbers algebras:
\begin{lem}\label{lem:pred-strong-nna}
  Let $(X,\otimes,I)$ be a monoidal category, and let $(N,z,s)$ be a natural numbers algebra therein. If $\underline{0} \neq \underline{1}$ and the (total) predecessor function $\mathsf{pred} : \mathbb{N} \to \mathbb{N}$ defined as in:
  \begin{mathpar}
    \mathsf{pred}(n)
    =
    \begin{cases}
      0 \text{ if } n = 0\\
      n' \text{ if } n = n' + 1
    \end{cases}
  \end{mathpar}
  is representable with respect to $(N,z,s)$, then $(N,z,s)$ is a strong natural numbers algebra.
\end{lem}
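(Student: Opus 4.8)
The plan is to extract from the representability of $\mathsf{pred}$ a single morphism $\underline{\mathsf{pred}} : N \to N$ that decrements numerals, and then to use it to ``peel off'' successors: a putative collision $\underline{n} = \underline{m}$ with $n \neq m$ will be driven down, by repeated post-composition, to the forbidden equation $\underline{0} = \underline{1}$.

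First I would record the defining equations of the representing morphism. Since $\mathsf{pred}$ is total and $\underline{\mathsf{pred}}$ represents it, for every $k \in \mathbb{N}$ we have $\underline{k}\,\underline{\mathsf{pred}} = \underline{\mathsf{pred}(k)}$; explicitly $\underline{0}\,\underline{\mathsf{pred}} = \underline{0}$ and $\underline{k+1}\,\underline{\mathsf{pred}} = \underline{k}$. An easy induction on $j$ then yields $\underline{n}\,\underline{\mathsf{pred}}^{j} = \underline{\mathsf{pred}^j(n)}$, and since iterating the predecessor $j$ times sends $n$ to $\max(n-j,0)$, this reads $\underline{n}\,\underline{\mathsf{pred}}^{j} = \underline{\max(n-j,0)}$.

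Next I would prove the key special case $\underline{0} \neq \underline{k}$ for every $k \geq 1$. Supposing toward a contradiction that $\underline{0} = \underline{k}$ with $k \geq 1$, post-composition of both sides with $\underline{\mathsf{pred}}^{k-1}$ gives $\underline{0}$ on the left and $\underline{\max(k-(k-1),0)} = \underline{1}$ on the right, i.e. $\underline{0} = \underline{1}$, contradicting the hypothesis $\underline{0} \neq \underline{1}$. Finally I would deduce strongness in general: given $\underline{n} = \underline{m}$, assume without loss of generality $n \leq m$; post-composing with $\underline{\mathsf{pred}}^{n}$ sends the two sides to $\underline{0}$ and $\underline{m-n}$ respectively, so $\underline{0} = \underline{m-n}$, whence $m-n = 0$ by the special case, i.e. $n = m$. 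Thus $n \mapsto \underline{n}$ is injective and $(N,z,s)$ is strong.

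I expect no serious obstacle here; the argument is elementary. The only points requiring care are the bookkeeping in the induction $\underline{n}\,\underline{\mathsf{pred}}^{j} = \underline{\max(n-j,0)}$, which relies only on associativity of composition together with the two defining equations, and the observation that representability of a \emph{total} function supplies an equation for \emph{every} input, so that each stage of repeated post-composition is legitimate. The role of the hypothesis $\underline{0} \neq \underline{1}$ is exactly to seed the contradiction at the bottom of this descent.
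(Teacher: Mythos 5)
Your proposal is correct and is essentially the paper's own argument: both extract a representing morphism $p$ for $\mathsf{pred}$ and drive a putative collision $\underline{n} = \underline{m}$ down to the forbidden equation $\underline{0} = \underline{1}$ by iterated post-composition with $p$. The only difference is bookkeeping --- the paper post-composes once with $p^{m-1}$ (assuming $n < m$) to hit $\underline{0} = \underline{1}$ directly, whereas you factor through the intermediate special case $\underline{0} \neq \underline{k}$ for $k \geq 1$, which is a harmless reorganization of the same descent.
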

\begin{proof}
  Let $p : N \to N$ represent $\mathsf{pred} : \mathbb{N} \to \mathbb{N}$. Suppose $\underline{n} = \underline{m}$. If $n \neq m$ then we have (without loss of generality) that $n < m$, and so $\mathsf{pred}^{m-1}(n) = 0$ and $\mathsf{pred}^{m-1}(m) = 1$. Now since $p$ represents $\mathsf{pred}$ we have $\underline{n}\,p^{m-1} = \underline{0}$ and $\underline{m}\,p^{m-1} = \underline{1}$. But then we have $\underline{0} = \underline{n}\,p^{m-1} = \underline{m}\,p^{m-1} = \underline{1}$, a contradiciton. It follows that $n = m$, as required. 
\end{proof}

For the numerals of a natural numbers algebra to behave more-or-less like the natural numbers, it must be a natural numbers object. Here we are interested in the ``weak'' version of this notion:
\begin{definition}\label{def:weak-left-nno}
  A \emph{weak left\footnote{There is of course a dual notion of \emph{weak right natural numbers object} in which $h : B \otimes N \to A$ instead. In a symmetric monoidal category the two notions coincide, and in our development anything true of one will be true of the other. We work with the ``left'' version, but could easily have chosen to work with the ``right'' one.} natural numbers object} in a monoidal category $(\C,\otimes,I)$ is a natural numbers algebra $(N,z,s)$ in $(\C,\otimes,I)$  with the property that for any diagram $B \stackrel{b}{\to} A \stackrel{a}{\from} A$ in $\C$ there exists a morphism $h : N \otimes B \to A$ of $\C$ such that
  \[
  \begin{tikzcd}
    B \ar[d,"1_B"'] \ar[r,"z \otimes 1_B"] & N \otimes B \ar[d,"h"] & N \otimes B \ar[l,"s \otimes 1_B"'] \ar[d,"h"] \\
    B \ar[r,"b"'] & A & \ar[l,"a"] A
  \end{tikzcd}
  \]
  Note in particular that $h$ need not be the unique such morphism.
\end{definition}
Weak natural numbers objects relate to representability of numerical partial functions as follows:
\begin{prop}[Plotkin~\cite{Plotkin2013}]
  Let $(\C,\otimes,I)$ be a monoidal category, and let $(N,z,s)$ be a weak left natural numbers object in $\C$. Then all primitive recursive functions are representable in $\C$ relative to $(N,z,s)$.
\end{prop}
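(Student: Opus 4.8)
The plan is to proceed by structural induction on the construction of the primitive recursive functions, exhibiting a representing morphism for each basic function and showing that the representable functions are closed under composition and primitive recursion. The engine driving everything is the iteration principle packaged in the weak natural numbers object: unwinding Definition~\ref{def:weak-left-nno} with a chosen base point $b : B \to A$ and step $a : A \to A$ yields a morphism $h : N \otimes B \to A$ satisfying $(z \otimes 1_B)h = b$ and $(s \otimes 1_B)h = ha$, from which a routine induction on $n$ (using $\underline{n+1} = \underline{n}s$ and the interchange law) gives the key identity $(\underline{n} \otimes 1_B)h = ba^n$ for every $n \in \mathbb{N}$. Crucially, all subsequent verifications take place at the level of numeral-points $I \to N^k$, so I only ever need the constructed morphisms to act correctly on numerals, never to satisfy any universal property or naturality condition.

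First I would manufacture the bookkeeping morphisms that a monoidal setting does not supply for free. Applying the iteration principle with $B = I$, $A = I$, $b = 1_I$, $a = 1_I$ produces a ``delete'' map $!_N : N \to I$ with $\underline{n}\,!_N = 1_I$ for all $n$; applying it with $B = I$, $A = N \otimes N$, $b = z \otimes z$, $a = s \otimes s$ produces a ``copy'' map $\Delta_N : N \to N \otimes N$ with $\underline{n}\,\Delta_N = \underline{n} \otimes \underline{n}$ (here I use strictness, so $N \otimes I = N$). These are not genuine comonoid structures, but they behave as copying and deletion on every numeral, which is all that representability requires. From them, together with identities, the unitors, and the available structural isomorphisms for regrouping factors (the symmetry, in the symmetric case), I assemble for each $m$ and $k$ a diagonal $N^m \to (N^m)^{\otimes k}$ and the projections $N^m \to N$, each correct on numeral-tuples.

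With this in hand the base cases and composition are essentially bookkeeping. The successor is represented by $s$, the constant zero of any arity by $!$ followed by $z$, and the $i$-th projection by deleting the remaining factors. For composition, if $\underline{f}$ represents $f : \mathbb{N}^n \to \mathbb{N}$ and each $\underline{g_j}$ represents $g_j : \mathbb{N}^m \to \mathbb{N}$, then the $n$-fold diagonal $N^m \to (N^m)^{\otimes n}$ followed by $\underline{g_1} \otimes \cdots \otimes \underline{g_n}$ and then $\underline{f}$ represents $f \circ (g_1, \ldots, g_n)$; precomposing with a numeral-tuple and invoking correctness of copy on numerals together with functoriality of $\otimes$ discharges the verification.

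The main obstacle is closure under primitive recursion, since the iteration principle only furnishes the ``blind'' iterate $ba^n$, whereas the recursion step needs access to the running counter $n$. The standard remedy is to iterate on enriched states carrying the counter alongside the accumulated value and the parameters. Concretely, for $f$ defined from $\underline{g}$ and $\underline{h}$ by $f(0,\vec y) = g(\vec y)$ and $f(n+1,\vec y) = h(n, f(n,\vec y), \vec y)$, I would set $A = N \otimes N \otimes N^m$, take the base point $b : N^m \to A$ sending $\vec y$ to $(\underline{0}, \underline{g}(\vec y), \vec y)$ and the step $a : A \to A$ sending $(n, v, \vec y)$ to $(s(n), \underline{h}(n, v, \vec y), \vec y)$, both built from copy, $z$, $s$, $\underline{g}$, $\underline{h}$, and structural maps. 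The iteration principle then yields $\Phi : N \otimes N^m \to A$ with $(\underline{n} \otimes 1_{N^m})\Phi = ba^n$, and an induction on $n$ using the recursion equations shows this carries $\underline{n} \otimes \underline{\vec y}$ to $(\underline{n}, \underline{f(n,\vec y)}, \underline{\vec y})$; postcomposing with the projection onto the middle factor gives the representing morphism. The delicate points here are purely organizational: threading the counter and parameters through the state so that the step map typechecks, and repeatedly appealing to the correctness of copy and delete on numerals to collapse the resulting diagram chase. No genuinely hard categorical content beyond the iteration principle is involved.
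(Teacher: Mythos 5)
The paper offers no proof of this proposition---it is imported wholesale from Plotkin~\cite{Plotkin2013}---so your proposal must be judged against the statement itself, and the statement assumes only that $(\C,\otimes,I)$ is a \emph{monoidal} category, with no symmetry. Your overall strategy (induction over the primitive recursive functions, the key identity $(\underline{n} \otimes 1_B)h = ba^n$, copy and delete maps manufactured by iteration and required to be correct only on numerals, and the enriched state $(n, v, \vec{y})$ for primitive recursion) is the standard and essentially correct route. But there is a genuine gap exactly where you hedge with ``the symmetry, in the symmetric case'': in a plain monoidal category the only structural morphisms are associators and unitors, which cannot permute tensor factors. Applying $\Delta_N$ wire-by-wire to $N^{\otimes m}$ yields the interleaved tuple $(x_1, x_1, x_2, x_2, \ldots, x_m, x_m)$, and nothing in your toolkit reshuffles this into the block form $(x_1,\ldots,x_m,x_1,\ldots,x_m)$. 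Consequently, for $m \geq 2$ you cannot assemble the diagonal $N^{\otimes m} \to (N^{\otimes m})^{\otimes k}$ needed for composition, nor the base map $\vec{y} \mapsto (\underline{0}, \underline{g}(\vec{y}), \vec{y})$ and step map of your primitive recursion, both of which copy the parameter block $\vec{y}$. This is not a pedantic point for the present paper: the multiplicative structure of its distributive monoidal categories (in particular of $\A$) is \emph{not} symmetric, so the proposition is invoked downstream precisely in the setting where your proof stalls.

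The gap is repairable, and the repair is a pleasant application of the very iteration principle you are using: the weak left natural numbers object manufactures a \emph{swap on numerals} without any ambient symmetry. Take $B = N$, $A = N \otimes N$, $b = 1_N \otimes z : N \to N \otimes N$ and $a = 1_N \otimes s$; Definition~\ref{def:weak-left-nno} yields $w : N \otimes N \to N \otimes N$ with $(\underline{m} \otimes 1_N)w = (1_N \otimes z)(1_N \otimes s)^m = 1_N \otimes \underline{m}$, whence $(\underline{m} \otimes \underline{n})w = \underline{n} \otimes \underline{m}$ by interchange. Like your copy and delete maps, $w$ is not a braiding---it is correct only on numerals---but your framework already insists that all verifications happen at numeral-points, so this is enough: composing such swaps gives every numeral-correct permutation of $N^{\otimes m}$, hence block copies from single-wire copies, after which your composition and primitive-recursion arguments go through verbatim. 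With this one additional construction inserted, the proposal becomes a complete proof of the proposition in its stated generality.
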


We will be particularly interested in the case where the ambient monoidal category $(\C,\otimes,I)$ is the multiplicative fragment of a distributive monoidal category. Then a natural numbers algebra is equivalently a morphism $\iota : I + N \to N$. From any such morphism we recover a natural numbers algebra $(N,\ip_0\iota,\ip_1\iota)$ and conversely from any natural numbers algebra $(N,z,s)$ we obtain $\iota = [z,s] : I+N \to N$. Moreover, these two constructions are inverses. Thus, a natural numbers algebra in given distributive monoidal category is equivalently a tuple $(N,\iota)$ where $\iota : I + N \to N$. 

We give an equivalent formulation of Definition~\ref{def:weak-left-nno} in this new setting:
\begin{lem}\label{lem:alternative-form-nno}
  In a distributive monoidal category, a natural numbers algebra $(N,\iota)$ defines a weak left natural numbers object $(N,z,s)$ with $[z,s] = \iota$ if and only if for any diagram $B \stackrel{b}{\to} A \stackrel{a}{\from} A$ there exists a morphism $h : NB \to A$ such that:
  \begin{mathpar}
    \begin{tikzcd}
        (I+N)B \ar[r,"\iota \otimes 1_B"] \ar[d,"\delta^r_{I,N,B}"'] & NB \ar[d,"h"] \\
        B + NB \ar[r,"{[b,ha]}"'] & A
      \end{tikzcd}
  \end{mathpar}
\end{lem}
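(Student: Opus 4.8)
The plan is to observe that the single square of the Lemma is merely a repackaging of the two commuting squares in Definition~\ref{def:weak-left-nno}, obtained by transporting along the isomorphism $\delta^r_{I,N,B}$ and then splitting a copairing. Since we work with strict multiplicative structure we have $I \otimes B = B$, so the codomain $(I\otimes B)+(N\otimes B)$ of $\delta^r_{I,N,B}$ is literally $B + NB$ and the square typechecks. The one external ingredient I would invoke is the preceding Labella-style lemma, which tells us that $\delta^r_{I,N,B}$ is invertible with inverse the canonical map $\mu^r_{I,N,B} = [\ip_0 \otimes 1_B, \ip_1 \otimes 1_B] : B + NB \to (I+N)\otimes B$, where $\ip_0 : I \to I+N$ and $\ip_1 : N \to I+N$ are the coprojections used to recover $z = \ip_0\iota$ and $s = \ip_1\iota$.

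First I would rewrite the Lemma's condition $(\iota \otimes 1_B)h = \delta^r_{I,N,B}[b,ha]$ by composing on the left with the isomorphism $\mu^r_{I,N,B}$, yielding the equivalent equation $\mu^r_{I,N,B}(\iota \otimes 1_B)h = [b,ha]$. Next I would simplify the left-hand side: since $\mu^r_{I,N,B}$ is a copairing and postcomposition distributes over copairings, $\mu^r_{I,N,B}(\iota\otimes 1_B)h = [(\ip_0\otimes 1_B)(\iota\otimes 1_B)h,\,(\ip_1\otimes 1_B)(\iota\otimes 1_B)h]$. By bifunctoriality of $\otimes$ we have $(\ip_0\otimes 1_B)(\iota\otimes 1_B) = (\ip_0\iota)\otimes 1_B = z\otimes 1_B$ and likewise $(\ip_1\otimes 1_B)(\iota\otimes 1_B) = s\otimes 1_B$. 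Hence the condition becomes $[(z\otimes 1_B)h,(s\otimes 1_B)h] = [b,ha]$, and by the universal property of the coproduct $B + NB$ (two copairings agree if and only if their components do, i.e.\ after precomposing with $\ip_0$ and $\ip_1$) this is equivalent to the conjunction $(z\otimes 1_B)h = b$ and $(s\otimes 1_B)h = ha$. These are exactly the commutativities of the left and right squares of Definition~\ref{def:weak-left-nno}, so for each fixed diagram $B\stackrel{b}{\to}A\stackrel{a}{\from}A$ a morphism $h$ witnesses one condition precisely when it witnesses the other, giving the claimed equivalence.

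I do not expect any serious obstacle here; the content is entirely formal. The only points demanding care are bookkeeping ones: confirming via the Labella-style lemma that $(\delta^r_{I,N,B})^{-1} = \mu^r_{I,N,B}$, keeping the typing straight under the strict identification $I\otimes B = B$, and correctly matching the coprojections $\ip_0,\ip_1$ appearing in $\mu^r_{I,N,B}$ with those used to define $z$ and $s$ from $\iota$. Given these, the argument is a short chain of the copairing fusion law, bifunctoriality of $\otimes$, and uniqueness of copairings.
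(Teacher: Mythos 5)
Your proposal is correct and is essentially the paper's own argument: the paper likewise reduces the single square to the two squares of Definition~\ref{def:weak-left-nno} using the fact that $\delta^r_{I,N,B}$ is mutually inverse with $\mu^r_{I,N,B} = [\ip_0 \otimes 1_B, \ip_1 \otimes 1_B]$, bifunctoriality of $\otimes$ (so that $(\ip_i \otimes 1_B)(\iota \otimes 1_B) = z \otimes 1_B$ resp.\ $s \otimes 1_B$), and the universal property of the coproduct $B + NB$. The only difference is organizational --- you run a single chain of equivalences after transporting along $\mu^r$, whereas the paper proves the two implications separately (the forward one via naturality of $\delta^r$) --- but the content coincides.
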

\begin{proof}
  Suppose that $(N,z,s)$ is a weak left natural numbers object. Then we have:
  \begin{align*}
    & \delta^r_{I,N,B}[b,ha]
    = \delta^r_{I,N,B}[(z \otimes 1_B)h,(s \otimes 1_B)h]
    = \delta^r_{I,N,B}((z \otimes 1_B) + (s \otimes 1_B))\mu_{NB}h
    \\&= ((z + s) \otimes 1_B)\delta^r_{N,N,B}\mu_{NB}h
    = ((z+s) \otimes 1_B)(\mu_N \otimes 1_B)h
    = ([z,s] \otimes 1_B)h
    = (\iota \otimes 1_B)h
  \end{align*}
  Conversely, suppose that we have $\delta^r_{I,N,B}[b,ha] = (\iota \otimes 1_B)h$. Then we have:
  \begin{align*}
    & (z \otimes 1_B)h
    = (\ip_0\iota \otimes 1_B)h
    = (\ip_0 \otimes 1_B)(\iota \otimes 1_B)h
    = (\ip_0 \otimes 1_B)\delta^r_{I,N,B}[b,ha]
    = \ip_0[b,ha]
    = b
  \end{align*}
  and
  \begin{align*}
    & (s \otimes 1_B)h
    = (\ip_1\iota \otimes 1_B)h
    = (\ip_1 \otimes 1_B)(\iota \otimes 1_B)h
    = (\ip_1 \otimes 1_B)\delta^r_{I,N,B}[b,ha]
    = \ip_1[b,ha]
    = ha
  \end{align*}
  The claim follows. 
\end{proof}

From this new perspective, natural numbers algebras are algebras of the $I+-$ functor. We will also be interested in coalgebras of this functor, which we name appropriately:
\begin{definition}
  A \emph{natural numbers coalgebra} in a distributive monoidal category is a tuple $(N,\gamma)$ where $\gamma : N \to I+N$. A natural numbers coalgebra $(N,\gamma)$ in a distributive monoidal category is said to be \emph{weakly final} in case for all $\beta : A \to I + A$ there exists a morphism $h : A \to N$ such that
  \[
  \begin{tikzcd}
    A \ar[d,"h"'] \ar[r,"\beta"] & I + A \ar[d,"1_I + h"] \\
    N \ar[r,"\gamma"'] & I + N
  \end{tikzcd}
  \]
  Note that $h$ need not be the unique such morphism. 
\end{definition}
Of particular interest is the case in which we have an isomorphism $I+N \cong N$. That is, a natural numbers algebra $\iota : I+N \to N$ and a natural numbers coalgebra $\iota^{-1} : N \to I+N$ which are mutually inverse. In this case, the numerals defined by $\iota$ are augmented by a sort of predecessor operation, given by $\iota^{-1}$. Moreover, straightforward conditions on $\iota$ and $\iota^{-1}$ yield representability of all partial recursive functions: 
\begin{theorem}[after Plotkin~\cite{Plotkin2013}]\label{thm:plotkin-representable}
  In a distributive monoidal category $\C$, suppose that:
  \begin{itemize}
  \item $\iota : I+N \to N$ is an isomorphism with inverse $\iota^{-1} : N \to I+N$.
  \item $(N,\iota)$ is a weak left natural numbers object.
  \item $(N,\iota^{-1})$ is a weakly final natural numbers coalgebra.
  \end{itemize}
  Then all partial recursive functions are representable in $\C$ relative to $(N,z,s)$ where $[z,s] = \iota$. 
\end{theorem}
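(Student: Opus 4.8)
The plan is to show that the class of partial functions representable relative to $(N,z,s)$ contains every primitive recursive function --- which is exactly the content of Plotkin's proposition recalled above --- and is in addition closed under the minimisation (least-search) operator. By Kleene's normal form theorem every partial recursive $f$ may be written as $f(\vec x) \simeq U(\mu y.[T(\vec x,y)=0])$ with $U$ unary and $T$ primitive recursive, so once minimisation is in hand the remaining composition is the trivial post-composition $\underline{m}\,\underline{U}$. Thus essentially all of the work lies in the minimisation step, and that is where the hypotheses on $\iota^{-1}$ are used.

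Before the main construction I would manufacture the structural wiring it requires. Even though the multiplicative structure is only monoidal, the weak natural numbers object already provides a copy map $c : N \to N\otimes N$ and a discard map $w : N \to I$ that act correctly on numerals, namely $\underline{n}\,c = \underline{n}\otimes\underline{n}$ and $\underline{n}\,w = 1_I$. These arise from Definition~\ref{def:weak-left-nno}: taking $B = I$, $A = N\otimes N$, $b = z\otimes z$, $a = s\otimes s$ yields $c$, and taking $A = B = I$, $a = b = 1_I$ yields $w$, the numeral equations following by induction from the defining commuting squares. Tensoring these gives copy and discard on every power $N^k$, which is all the wiring the sequel needs.

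The heart of the argument, and its hardest part, is minimisation, where the weakly final coalgebra is essential. Given $\underline{g} : N^{n+1}\to N$ representing $g$, I would build a search coalgebra $\beta : N^{n+1}\to I + N^{n+1}$ whose state carries the parameters together with the current candidate $y$: the map $\beta$ copies the state with $c$, evaluates $\underline g$ on one copy, applies $\iota^{-1}$ as a combined zero-test-and-predecessor, and then routes the result with the left distributor $\delta^l$. On the summand where $\underline g$ produced $\underline 0$ it halts by discarding the state with $w$ into the $I$ component; on the other summand it discards the spurious predecessor, increments the candidate by $1_{N^n}\otimes s$, and loops. Weak finality of $(N,\iota^{-1})$ supplies $h : N^{n+1}\to N$ with $h\,\iota^{-1} = \beta(1_I + h)$, and I would take the representing morphism to be $\underline{f} = (1_{N^n}\otimes z)\,h$, i.e.\ the unfold started from candidate $\underline 0$.

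It then remains to check that $\underline f$ represents $f(\vec x) = \mu y.[g(\vec x,y)=0]$, and the delicate point is seeing that the number of search steps equals the value returned. Supposing $f(\vec x)=k$, so that $g(\vec x,k)=0$ while each $g(\vec x,j)$ with $j<k$ is defined and nonzero, I would rewrite the coalgebra equation as $h = \beta(1_I + h)\,\iota$ and prove by downward induction on $j$ that $(\underline{x_1}\otimes\cdots\otimes\underline{x_n}\otimes\underline j)\,h = \underline{k-j}$ for $0\le j\le k$: at $j = k$ the zero-test enters the $I$ summand and $\ip_0\,\iota = z$ returns $\underline 0$, while for $j<k$ the nonzero outcome enters the $N^{n+1}$ summand and $\ip_1\,\iota = s$ applies one successor to the inductive value. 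Evaluating at $j=0$ gives $(\underline{x_1}\otimes\cdots\otimes\underline{x_n})\,\underline f = \underline k$, as required. This induction touches $g$ only at arguments $j\le k$, all of which lie in its domain, so the behaviour of $\underline g$ off the domain of $g$ is irrelevant; and because mere representability asks only for the forward implication, the non-uniqueness of $h$ afforded by weak finality is harmless. Composing with $\underline U$ as dictated by the normal form then yields every partial recursive function.
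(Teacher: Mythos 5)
The paper does not actually prove this theorem in-text: it is stated ``after Plotkin'' and its proof is deferred to \cite{Plotkin2013}, of whose results the paper explicitly notes this is a weaker, more specific version. Your blind proof is correct, and it reconstructs the cited argument along essentially the same lines: primitive recursive functions from the weak natural numbers object (the proposition the paper quotes), minimisation by unfolding a step-counting search coalgebra into the weakly final coalgebra $(N,\iota^{-1})$, and Kleene normal form to reduce the general case to a single minimisation followed by unary post-composition with $\underline{U}$, which neatly sidesteps any need for general closure of representable functions under composition. The delicate points all check out: your copy and discard maps obtained from Definition~\ref{def:weak-left-nno} are only numeral-correct rather than natural, but mere representability never evaluates them off numerals lying in the domain of $g$ (and for the normal form $g$ is total anyway); the routing step needs $(1 \otimes \ip_0)\delta^l = \ip_0$ and $(1 \otimes \ip_1)\delta^l = \ip_1$, which hold because $\delta^l$ is inverse to $\mu^l = [1 \otimes \ip_0, 1 \otimes \ip_1]$ by the paper's lemma after Labella; and your downward induction correctly combines $h = \beta(1_I + h)\iota$ with $z\,\iota^{-1} = \ip_0$ and $\underline{m+1}\,\iota^{-1} = \underline{m}\,\ip_1$, using, as you note, only the forward implication that mere representability demands, so the non-uniqueness of the unfold $h$ is indeed harmless.
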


There is a version of Theorem~\ref{thm:plotkin-representable} for strong representability:
\begin{theorem}[after Plotkin~\cite{Plotkin2013}]\label{thm:plotkin-strong-representable}
  In a distributive monoidal category $\C$, suppose that:
  \begin{itemize}
  \item $\iota : I+N \to N$ is an isomorphism with inverse $\iota^{-1} : N \to I+N$.
  \item $(N,\iota)$ is a weak left natural numbers object.
  \item $(N,\iota^{-1})$ is a weakly final natural numbers coalgebra.
  \item $z \neq zs : I \to N$ where $[z,s] = \iota$.
  \item Every partial function that is strongly representable in $\C$ is partial recursive. 
  \end{itemize}
  Then all partial recursive functions are strongly representable in $\C$ relative to $(N,z,s)$ where $[z,s] = \iota$. 
\end{theorem}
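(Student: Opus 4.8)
The plan is to bootstrap from the mere representability already guaranteed by Theorem~\ref{thm:plotkin-representable}, whose three hypotheses coincide with the first three assumed here. Thus every partial recursive function is representable relative to $(N,z,s)$, and it remains only to upgrade representability to strong representability.

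First I would establish that $(N,z,s)$ is a strong natural numbers algebra. Since the predecessor $\p : \mathbb{N} \to \mathbb{N}$ is primitive recursive it is representable, and together with the hypothesis $z \neq zs$ --- that is, $\underline{0} \neq \underline{1}$ --- Lemma~\ref{lem:pred-strong-nna} yields that $(N,z,s)$ is strong. Strongness is exactly what makes ``the partial function strongly represented by a morphism'' well defined, and it immediately promotes mere representability to strong representability for \emph{total} functions: if $f$ is total and $\underline{f}$ represents it, then $(\underline{k_1} \otimes \cdots \otimes \underline{k_n})\underline{f} = \underline{f(k_1,\ldots,k_n)}$ for every input, and strongness supplies the converse implication. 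In particular all primitive recursive functions are strongly representable.

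The heart of the argument is the genuinely partial case. I would use Kleene's normal form, by which every partial recursive $f$ is a composite of primitive recursive functions with a single application of the minimisation operator, so that it suffices to show the strongly representable partial functions are closed under minimisation. Given a strongly representable total predicate, I would implement the unbounded search as a while-loop built from the parameterised iteration operator $(-)^\dagger$ supplied by the trace, whose fixed-point property $f^\dagger = f[1_X,f^\dagger]$ realises the loop, using $\iota : I+N \to N$ to step the search variable and $\iota^{-1} : N \to I+N$ to perform the case analysis at each stage, with the weak finality of the coalgebra $(N,\iota^{-1})$ guaranteeing that the search may proceed unboundedly. The resulting morphism $\underline{h}$ strongly represents \emph{some} partial function: by strongness of $(N,z,s)$ this function is well defined, by the final hypothesis it is partial recursive, and by construction $\underline{h}$ merely represents $f$; I would then argue that these constraints force the strongly represented function to equal $f$ exactly.

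The main obstacle is control of the undefined case: strong representability demands that $(\underline{k_1} \otimes \cdots \otimes \underline{k_n})\underline{h}$ \emph{fail} to be a numeral precisely when $f(k_1,\ldots,k_n)$ is undefined, whereas mere representability of the ingredients says nothing about their behaviour off their domains. This is exactly where the weakly final coalgebra is essential: it allows the search loop to genuinely diverge, so that the categorical value on an input lacking a witness is a non-numeral, matching the divergence of minimisation. Verifying that $(-)^\dagger$ does not spuriously return a numeral on non-terminating inputs --- and that the hypothesis ``every strongly representable function is partial recursive'' then pins the strongly represented function down to $f$ itself rather than a proper extension --- is the delicate step; the remaining closure checks (projections, the coprojections $z$ and $s$, and composition respecting strongness) are routine once the search step is in place.
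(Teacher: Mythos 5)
You should first note what the paper actually does here: it gives \emph{no} proof of Theorem~\ref{thm:plotkin-strong-representable}. The result is imported wholesale from Plotkin~\cite{Plotkin2013}, with the explicit remark that the stated version is a weakened restatement of his theorem, chosen for ease of integration. So there is no in-paper argument to match, and your attempt must be judged as a standalone proof --- and as such it has a genuine gap, precisely at the step you yourself flag as delicate. Your central claim is that weak finality of $(N,\iota^{-1})$ ``allows the search loop to genuinely diverge, so that the categorical value on an input lacking a witness is a non-numeral.'' Weak finality asserts only the \emph{existence} of a mediating morphism $h$ satisfying $h = \beta(1_I + h)\iota$; it constrains but does not determine the value of $(\underline{k_1} \otimes \cdots \otimes \underline{k_n})h$ on witness-free inputs. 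Unwinding the fixed-point equation on such an input yields only $(\underline{k_1} \otimes \cdots \otimes \underline{k_n})h = x_m s^m$ for every $m$; applying the representable predecessor $p$ (with $sp = 1_N$) reduces a putative numeral value $\underline{j}$ to the disjointness claim $z \neq xs$, equivalently $\ip_0 \neq x\,\ip_1 : I \to I+N$. That disjointness is \emph{not} among the hypotheses: in a distributive \emph{monoidal} category there is no discard $N \to I$ with which to separate the summands, and $\underline{0} \neq \underline{1}$ does not deliver it (in Plotkin's own cartesian distributive-category setting disjointness of coproducts is a theorem, which is exactly the extra structure your argument silently borrows).

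Two further steps do not go through as stated. First, ``these constraints force the strongly represented function to equal $f$ exactly'' is asserted, not proved: knowing that $\overline{h}$ is partial recursive (by the final hypothesis) and extends $f$ (by mere representability) is compatible with $\overline{h}$ being a \emph{proper} partial recursive extension of $f$, so the final hypothesis alone cannot ``pin down'' $f$; its role in Plotkin's proof is more specific than this. Second, the composition closure you dismiss as routine is in fact where spurious numerals arise: a strongly representable \emph{total} $U$ in Kleene normal form is only constrained on numeral inputs, so it may send a non-numeral global element $I \to N$ (the output of a diverging search) to a numeral, turning divergence of the composite into a defined value. Closure of the strongly representable partial functions under composition is precisely what fails in a general model, and any correct proof must either fold $U$ into the search loop so the unwinding argument applies to the whole composite, or use the final hypothesis in a sharper way. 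Your proposal correctly identifies all the pressure points but resolves none of them; the paper avoids the issue entirely by citation.
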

It is worth noting that Theorem~\ref{thm:plotkin-representable} and Theorem~\ref{thm:plotkin-strong-representable} are in fact weaker, more specific versions of the results obtained by Plotkin in~\cite{Plotkin2013}, which assume slightly less structure. We have chosen to present the weaker statements here in order to more easily integrate these results into our development.

In the situation described by Theorem~\ref{thm:plotkin-strong-representable} the natural numbers algebra $(N,\iota)$ is always strong:
\begin{lemma}
  Let $(N,\iota)$ be a natural numbers algebra in a distributive monoidal category and suppose that $\iota$ in invertible. If $\underline{0} \neq \underline{1}$ then $(N,\iota)$ is a strong natural numbers algebra. 
\end{lemma}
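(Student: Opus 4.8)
The plan is to reduce the statement to Lemma~\ref{lem:pred-strong-nna}, which already tells us that a natural numbers algebra with $\underline{0} \neq \underline{1}$ in which the total predecessor function $\p$ is representable is automatically strong. Since $\underline{0} \neq \underline{1}$ is assumed, it suffices to exhibit a single morphism $p : N \to N$ representing $\p$ with respect to $(N,z,s)$, where $[z,s] = \iota$. The whole point of the hypothesis is that invertibility of $\iota$ supplies exactly the ``decompose a successor'' operation one needs in order to define a predecessor.

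Concretely, I would take
\[
  p = \iota^{-1}[z,1_N] : N \to N,
\]
the intuition being that $\iota^{-1}$ decomposes an element of $N$ into the information ``it was zero'' (landing in the $I$ summand) versus ``it was the successor of $m$'' (landing in the $N$ summand), after which $[z,1_N]$ returns $\underline{0}$ in the former case and $m$ in the latter. To verify that this $p$ represents $\p$, I would use the facts recalled just before Lemma~\ref{lem:alternative-form-nno}, namely $z = \ip_0\iota$ and $s = \ip_1\iota$, together with $\iota\iota^{-1} = 1_{I+N}$ and the coproduct equations $\ip_0[z,1_N] = z$ and $\ip_1[z,1_N] = 1_N$. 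The base case computes as $\underline{0}\,p = z\iota^{-1}[z,1_N] = \ip_0\iota\iota^{-1}[z,1_N] = \ip_0[z,1_N] = z = \underline{0}$, and the successor case as $\underline{n+1}\,p = \underline{n}\,s\,\iota^{-1}[z,1_N] = \underline{n}\,\ip_1\iota\iota^{-1}[z,1_N] = \underline{n}\,\ip_1[z,1_N] = \underline{n}$. These are precisely the defining equations $\p(0) = 0$ and $\p(n+1) = n$, so $p$ represents $\p$.

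With representability of $\p$ established and $\underline{0} \neq \underline{1}$ in hand, Lemma~\ref{lem:pred-strong-nna} applies directly and yields that $(N,\iota)$ is a strong natural numbers algebra, as required.

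There is no deep obstacle here; the argument is essentially a short construction followed by an appeal to the earlier lemma. The only points demanding care are getting the definition of $p$ oriented correctly --- in particular placing $z$ (and not, say, the identity or a successor) in the $I$-component of the copairing, so that the $n=0$ case is sent to $\underline{0}$ rather than something undefined --- and keeping track of the diagrammatic left-to-right composition order when chasing the two verification equations. Everything else reduces to the already-proved Lemma~\ref{lem:pred-strong-nna}.
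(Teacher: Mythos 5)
Your proposal is correct and matches the paper's own proof essentially verbatim: both reduce the claim to Lemma~\ref{lem:pred-strong-nna}, define $p = \iota^{-1}[z,1_N]$, and verify the two equations via $z = \ip_0\iota$, $s = \ip_1\iota$, and $\iota\iota^{-1} = 1_{I+N}$. The only cosmetic difference is that the paper states the verification at the level of the equations $zp = z$ and $sp = 1_N$, from which your numeral-by-numeral computations follow immediately.
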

\begin{proof}
  Lemma~\ref{lem:pred-strong-nna} tells us that it suffices to show that $\mathsf{pred} : \mathbb{N} \to \mathbb{N}$ is representable. Define $p = \iota^{-1}[z,1_N] : N \to N$ and let $[z,s] = \iota$. Then we have $zp = \ip_0\iota\iota^{-1}[z,1_N] = z$ and $sp = \ip_1\iota\iota^{-1}[z,1_N] = 1_N$. It follows that $p$ represents $\mathsf{pred}$, and so $(N,\iota)$ is strong.
\end{proof}

\section{Elgot Categories}\label{sec:elgot-categories}

In this section we introduce (pre-)Elgot categories, and show that in any (pre-)Elgot category the partial recursive functions are representable (Theorem~\ref{thm:elgot-representable}). We do this by showing that any pre-Elgot category satisfies the conditions of Theorem~\ref{thm:plotkin-representable}. Of particular interest is the way in which the traced cocartesian monoidal structure gives rise to the necessary weakly initial and weakly final algebra structure (Lemma~\ref{lem:pre-elgot-representable}).

We begin with the definition of (pre-)Elgot category:
\begin{definition}
  A \emph{pre-Elgot category} is a tuple $(\C,(+,0,\sigma^+),(\otimes,I),(\mu,\eta),\mathsf{Tr},(\lambda^\bullet,\rho^\bullet),(\delta^l,\delta^r),(N,\iota))$ such that:
  \begin{enumerate}
  \item $(\C,(+,0,\sigma^+),(\otimes,I),(\mu,\eta),(\lambda^\bullet,\rho^\bullet),(\delta^l,\delta^r))$ is a distributive monoidal category.
  \item $((\C,+,0,\sigma^+),\mathsf{Tr})$ is a traced monoidal category.
  \item $N$ is an object of $\C$ with $\iota : I+N \to N$ an isomorphism.
  \end{enumerate}
  An \emph{Elgot category} is a pre-elgot category in which the trace is uniform. Moreover, a given (pre-)Elgot category is \emph{right-strict} in case the underlying distributive monoidal category structure is right-strict.
\end{definition}

While the results of this section hold for any pre-Elgot category, the construction of an initial Elgot category of abacus programs given in Section~\ref{sec:abacus-programs} requires a uniform trace operator. Our naming scheme is motivated by the convention that an ``Elgot iteration operator'' is implied to satisfy the uniformity axiom. Examples of Elgot categories (ignoring differences in strictness for the moment) include the category of sets and partial functions, and the category of sets and partial recursive functions.

In any pre-Elgot category the conditions of Theorem~\ref{thm:plotkin-representable} are satisfied:
\begin{lemma}\label{lem:pre-elgot-representable}
  For any pre-Elgot category $(\C,(+,0,\sigma^+),(\otimes,I),(\mu,\eta),\mathsf{Tr},(\lambda^\bullet,\rho^\bullet),(\delta^l,\delta^r),(N,\iota))$ we have:
  \begin{enumerate}
  \item $(N,\iota)$ is a weak left natural numbers object in $\C$.
  \item $(N,\iota^{-1})$ is a weakly final natural numbers coalgebra in $\C$. 
  \end{enumerate}
\end{lemma}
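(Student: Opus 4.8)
The plan is to use the iteration operator $(-)^\dagger$ that the traced cocartesian structure supplies (Hasegawa, as recalled above), and then to verify the defining diagrams using only the fixed-point law $f^\dagger = f[1_X,f^\dagger]$ together with the distributor coherences. For the first claim I would pass to the reformulation of Lemma~\ref{lem:alternative-form-nno}: given $b : B \to A$ and $a : A \to A$ it suffices to produce $h : N \otimes B \to A$ with $h = g[b,ha]$, where $g = (\iota^{-1} \otimes 1_B)\delta^r_{I,N,B} : N\otimes B \to B + (N \otimes B)$ is the ``decrement'' map sending the zero case into the $B$-summand and the successor case into the $(N\otimes B)$-summand. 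Intuitively $h(n,y)$ ought to be $a^n(b(y))$.

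Since $h = g[b,ha]$ is not a bare fixed-point equation (the step map $a$ is applied \emph{after} the recursive call), I would realise $h$ by a tail-recursive loop carrying an accumulator of type $A$. Concretely, set $\ell = (\iota^{-1}\otimes 1_A)\delta^r_{I,N,A}(1_A + (1_N\otimes a)) : N\otimes A \to A + (N\otimes A)$ and define $h = (1_N\otimes b)\,\ell^\dagger$. The morphism $\ell$ decrements the counter and applies $a$ to the accumulator on each pass, so that the fixed-point law, together with $z\iota^{-1} = \ip_0$, $s\iota^{-1} = \ip_1$ and the coherence $(\ip_i \otimes 1)\delta^r = \ip_i$, yields the two clean identities $(z\otimes 1_A)\ell^\dagger = 1_A$ and $(s\otimes 1_A)\ell^\dagger = (1_N\otimes a)\ell^\dagger$. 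The base equation $(z\otimes 1_B)h = b$ then follows immediately.

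The main obstacle is the successor equation $(s\otimes 1_B)h = ha$, which unwinds (via $(s\otimes 1_A)\ell^\dagger = (1_N\otimes a)\ell^\dagger$) to the interchange law $(1_N\otimes a)\ell^\dagger = \ell^\dagger a$, asserting that applying the step map to the accumulator agrees with applying it to the output of the completed iteration. This is not an instance of the fixed-point law alone --- naively commuting $a$ through the loop body duplicates it --- but it does follow from the trace axioms. I would rewrite both $\ell^\dagger a$ and $(1_N\otimes a)\ell^\dagger$ as traces using the tightening law \textbf{[TR3]}, and then slide the loop-level copies of $a$ around the feedback wire using \textbf{[TR5]} until both reduce to the common trace $\mathsf{Tr}^{N\otimes A}_{N\otimes A,A}\!\left(\mu_{N\otimes A}\,d\,(a + (1_N\otimes a))\right)$ with $d = (\iota^{-1}\otimes 1_A)\delta^r_{I,N,A}$, the codiagonal naturality of $\mu$ and the naturality of $\delta^r$ handling the bookkeeping. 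The crucial point is that \textbf{[TR5]} holds for \emph{arbitrary} morphisms, so this argument uses only \textbf{[TR1]}--\textbf{[TR6]} and not uniformity; uniformity is needed only later, for the abacus category.

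For the second claim I would argue dually. Given $\beta : A \to I+N$-style data $\beta : A \to I + A$, I want $h : A \to N$ with $h\iota^{-1} = \beta(1_I + h)$, equivalently $h = \beta[z,hs]$; here $h$ should count the number of $\iota$-unfoldings of $\beta$ and emit the corresponding numeral. I would build $h$ by the dual tail-recursive loop, threading a numeral accumulator that is incremented by $s$ on each pass (branching on $\beta$ with $\delta^r$ and incrementing with $\iota$) and releasing the accumulator when $\beta$ lands in the $I$-summand. The base and step verifications mirror those above, the only nonformal step again being an interchange law --- now commuting the increment $s$ past the iteration --- proved by the same double-sliding use of \textbf{[TR3]} and \textbf{[TR5]}. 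Assembling the two parts gives the statement.
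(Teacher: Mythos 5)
Your proposal is correct and takes essentially the same route as the paper: the paper's witness $h = \mathsf{Tr}^{NA}_{NB,A}([1_N \otimes b,\,1_N \otimes a](\iota^{-1} \otimes 1_A)\delta^r_{I,N,A})$ is exactly your $(1_N \otimes b)\,\ell^\dagger$ after one application of \textbf{[TR5]}, and its string-diagrammatic verification of the square of Lemma~\ref{lem:alternative-form-nno} performs precisely the tightening-and-sliding interchange of $a$ (dually, of $s$ for the coalgebra half, where the paper takes $h = \mathsf{Tr}^{AN}_{A,N}([1_A \otimes z,\,1_A \otimes s](\beta \otimes 1_N)\delta^r_{I,A,N})$) that you spell out algebraically. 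Your observation that only \textbf{[TR1]}--\textbf{[TR6]} and naturality, not uniformity, are needed is likewise consistent with the paper, which states the lemma for arbitrary pre-Elgot categories.
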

\begin{proof}
\begin{enumerate}
\item We will use the alternate presentation given in Lemma~\ref{lem:alternative-form-nno}. Suppose we have $B \stackrel{b}{\to} A \stackrel{a}{\from} A$. Let $h : NB \to A$ be given as in:
    \[
    h
    =
    \mathsf{Tr}^{NA}_{NB,A}([1_N \otimes b,1_N \otimes a](\iota^{-1} \otimes 1_A)\delta^r_{I,N,A})
    \]
    Then we have:
    \begin{align*}
      & (\iota \otimes 1_B)h
      = (\iota \otimes 1_B)\mathsf{Tr}^{NA}_{NB,A}([1_N \otimes b,1_N \otimes a](\iota^{-1} \otimes 1_A)\delta^r_{I,N,A}) 
      \\&\stackrel{*}{=} \delta^r_{I,N,B}[b,\mathsf{Tr}^{NA}_{NB,A}([1_N \otimes b,1_N \otimes a](\iota^{-1} \otimes 1_A)\delta^r_{I,N,A})a]
      = \delta^r_{I,N,B}[b,ha]
    \end{align*}
    where the marked equation ($\stackrel{*}{=}$) holds as in:
    \begin{mathpar}
      \includegraphics[height=5cm,align=c]{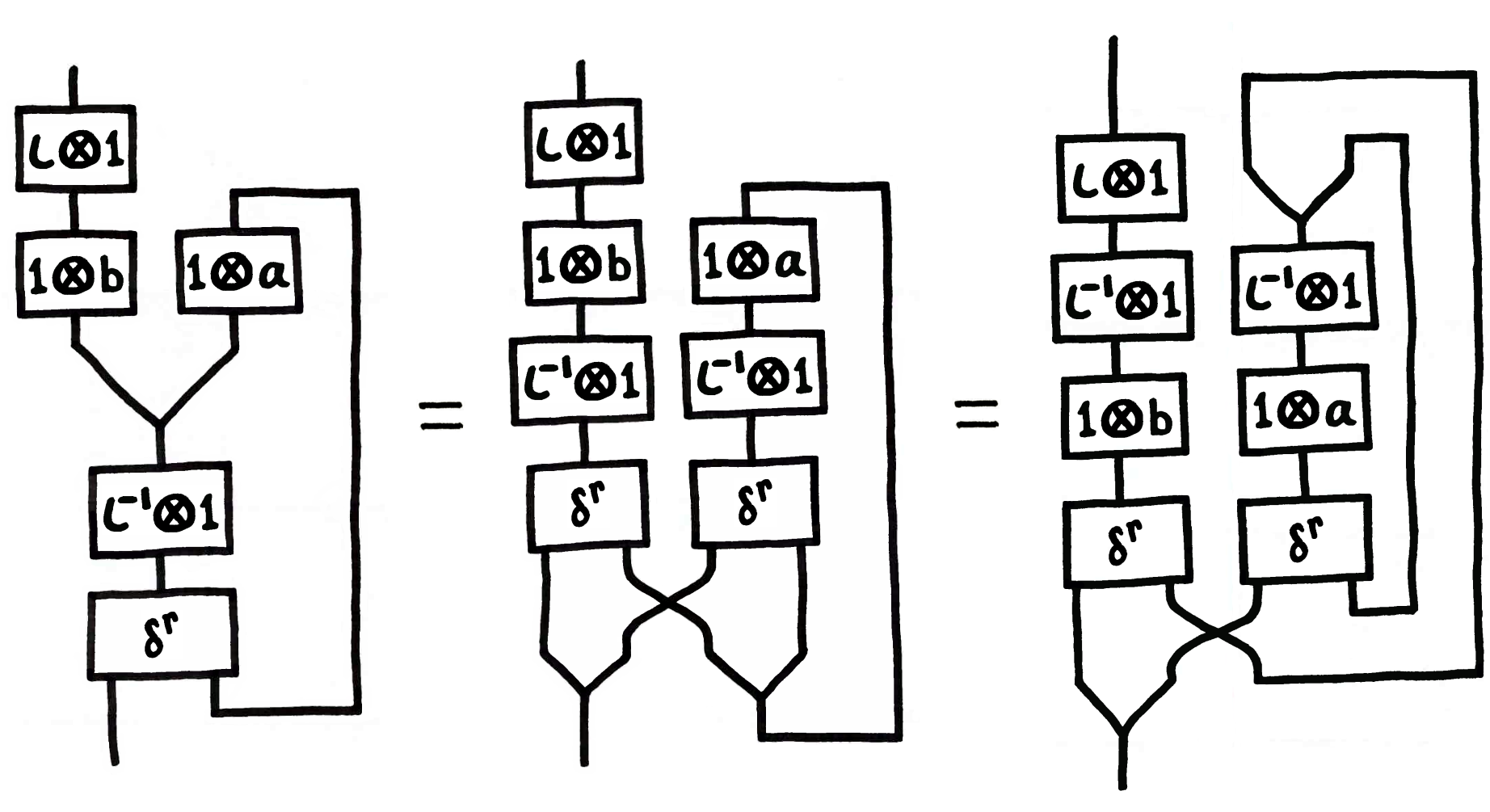}
    \end{mathpar}
    \begin{mathpar}
      \includegraphics[height=5cm,align=c]{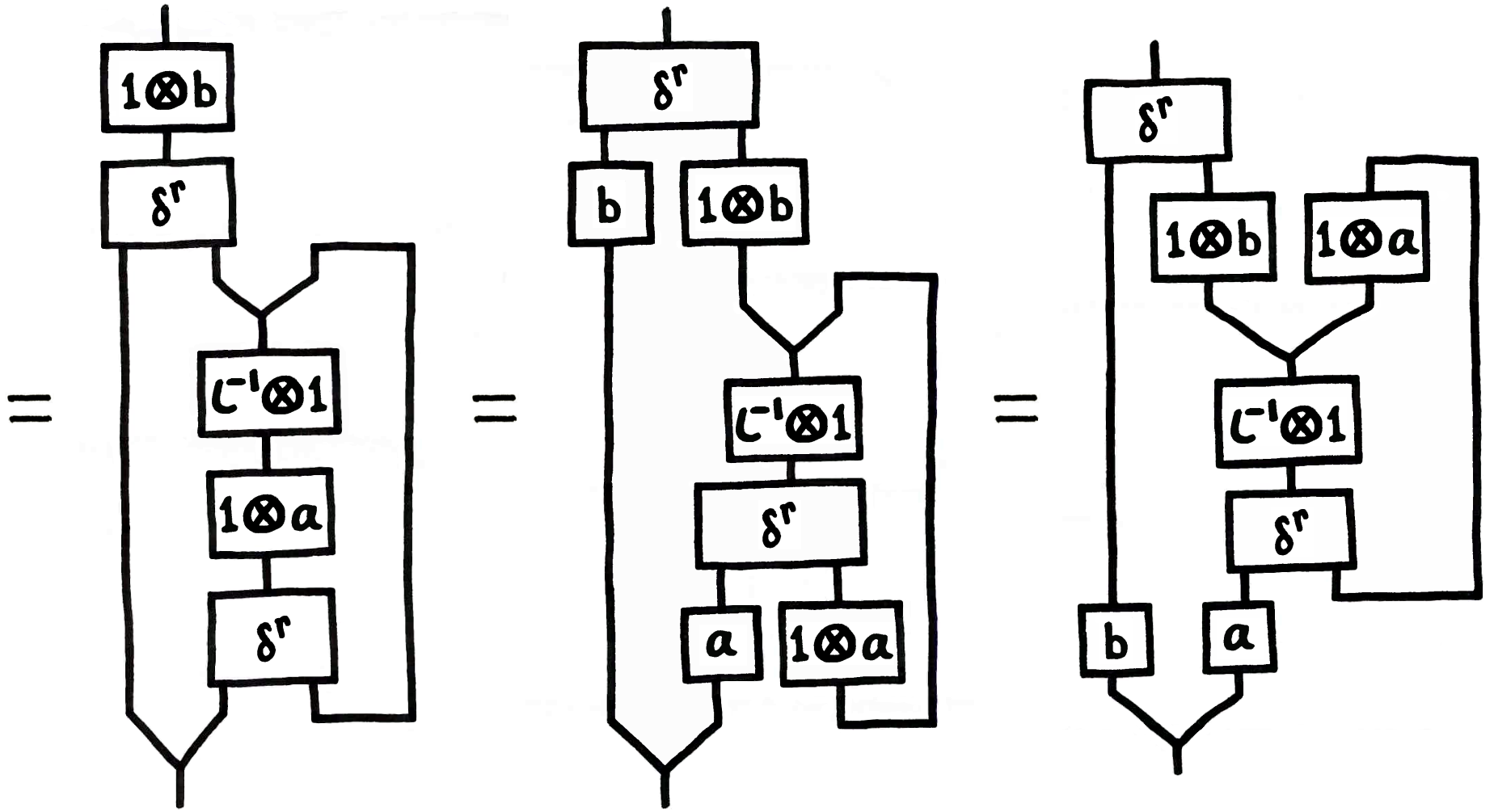}
    \end{mathpar}
    The claim follows.
\item We must show that for any $\beta : A \to I+A$ there exists $h : A \to N$ such that:
    \begin{mathpar}
      \begin{tikzcd}
        A \ar[d,"h"'] \ar[r,"\beta"] & I+A \ar[d,"1_I + h"] \\
        N \ar[r,"{\iota^{-1}}"'] & I+N 
      \end{tikzcd}
      
      \text{or equivalently}

      \begin{tikzcd}
        A \ar[d,"h"'] \ar[r,"\beta"] & I+A \ar[d,"1_I + h"]  \\
        N & I+N \ar[l,"\iota"] 
      \end{tikzcd}
    \end{mathpar}
    To that end, suppose $\beta : A \to I + A$. Let $\iota = [z,s]$, and define $h : A \to N$ as in:
    \[
    h
    =
    \mathsf{Tr}^{AN}_{A,N}([1_A \otimes z, 1_A \otimes s](\beta \otimes 1_N)\delta^r_{I,A,N})
    \]
    Then we have:
    \begin{align*}
      & h
      = \mathsf{Tr}^{AN}_{A,N}([1_A \otimes z, 1_A \otimes s](\beta \otimes 1_N)\delta^r_{I,A,N})
      \\&\stackrel{*}{=} \beta(1_I + \mathsf{Tr}^{AN}_{A,N}([1_A \otimes z, 1_A \otimes s](\beta \otimes 1_N)\delta^r_{I,A,N}))\iota
      = \beta(1_I + h)\iota
    \end{align*}
  where the marked equation ($\stackrel{*}{=}$) holds as in:
\begin{mathpar}
  \includegraphics[height=4.5cm,align=c]{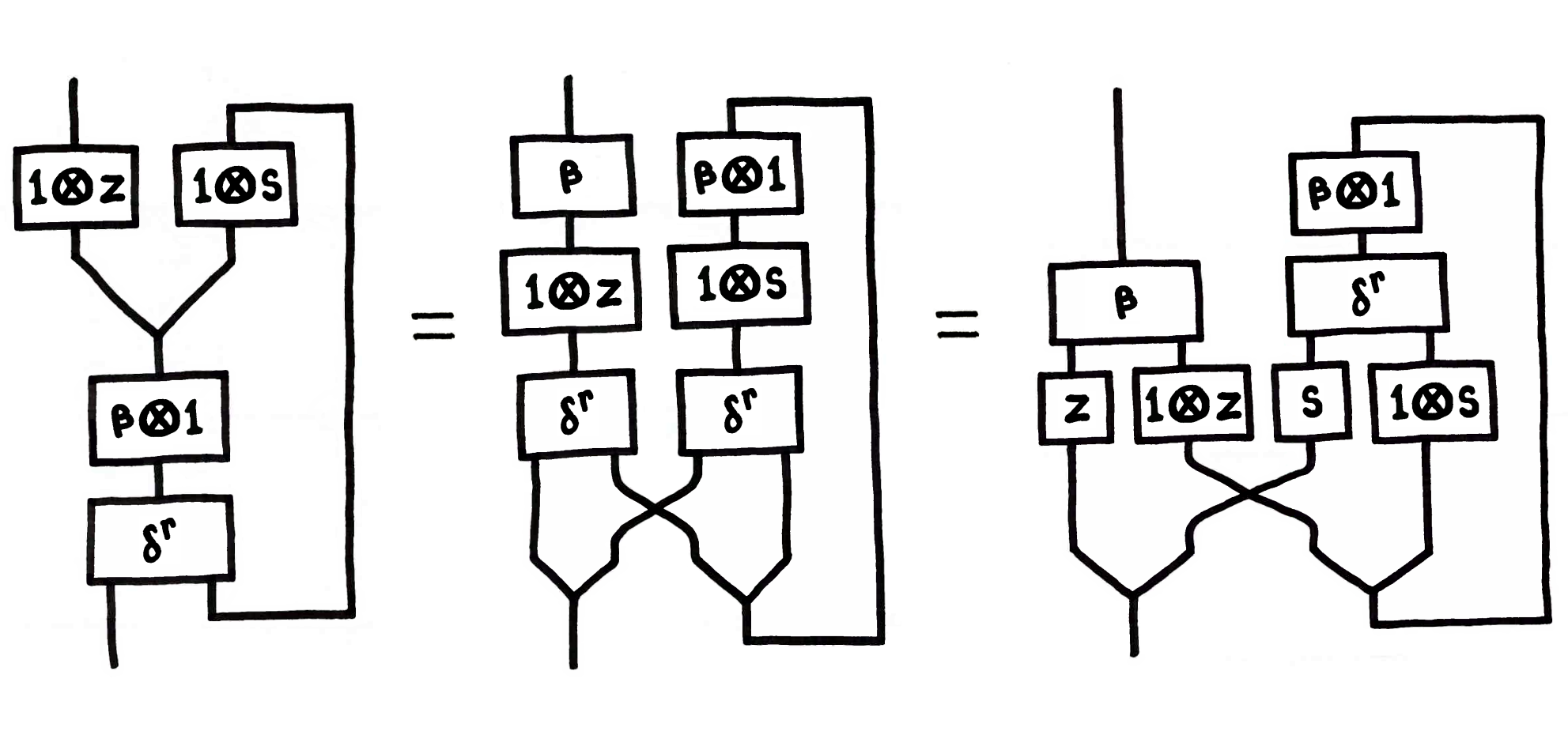}
\end{mathpar}
\begin{mathpar}
  \includegraphics[height=4.5cm,align=c]{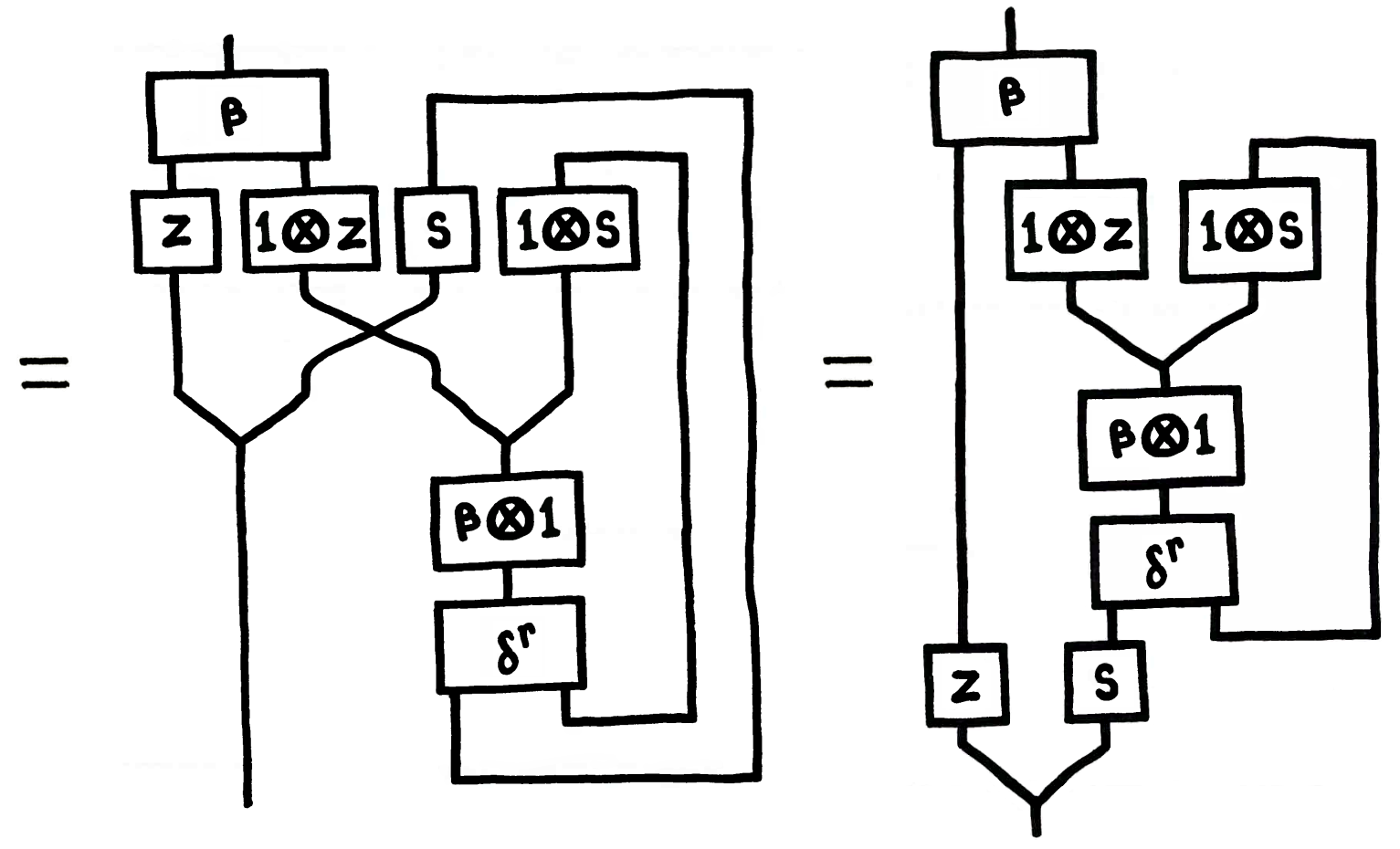}
\end{mathpar}
The claim follows.
\end{enumerate}
\end{proof}

Now as a corollary we obtain our representability result:
\begin{theorem}\label{thm:elgot-representable}
  Every partial recursive function is representable in any pre-Elgot category.
\end{theorem}
\begin{proof}
Immediate from Lemma~\ref{lem:pre-elgot-representable} and Theorem~\ref{thm:plotkin-representable}. 
\end{proof}

\section{Abacus Programs}\label{sec:abacus-programs}
In this section we introduce Lambek's abacus programs~\cite{Lambek1961}, construct a category of abacus programs (Definition~\ref{def:abacus-programs}), show that it is an Elgot category (Theorem~\ref{thm:abacus-elgot}), that it strongly represents the partial recursive functions (Theorem~\ref{thm:abacus-strong-representability}), and that it is an initial object in a category of right-strict Elgot categories and structure-preserving functors (Theorem~\ref{thm:abacus-initial}). 

What we call \emph{abacus programs} are a Turing-complete model of computation introduced by Lambek~\cite{Lambek1961}, in which one assumes a countably infinite set of \emph{locations}, each of which contains a natural number of \emph{counters}. Programs are specified as flowcharts with two sorts of instruction for each location $X$:
\begin{mathpar}
  \includegraphics[height=1.7cm,align=c]{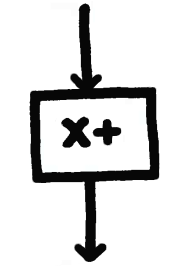}

  \includegraphics[height=1.7cm,align=c]{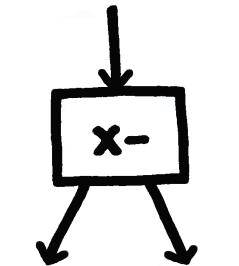}
\end{mathpar}
The instruction above left adds a single counter to location $X$, following the only arrow out of the instruction box to reach the next instruction. The effect of the instruction above right depends on the number of counters $X$ contains. If it is zero, then it remains zero and we follow the left arrow to the next instruction box. If it is nonzero, then we remove a counter from $X$ and follow the right arrow instead\footnote{Lambek's original presentation uses the opposite convention.}. For example, the following abacus program moves all of the counters from location $X$ to location $Y$:
\begin{mathpar}
  \includegraphics[height=3cm,align=c]{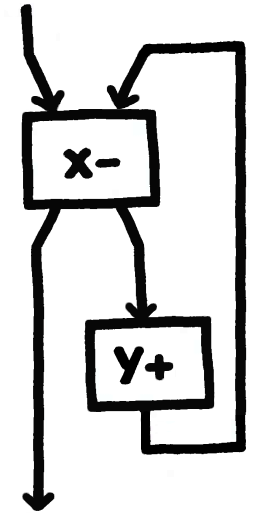}
\end{mathpar}
Arrows with no source are \emph{entry points} to the program, and arrows with no target are \emph{exit points}. The textbook of Boolos, Burgess, and Jeffery treats abacus programs at some length~\cite[Chapter~5]{Boolos2002}, and is highly recommended to the reader interested in a more detailed exposition.


We next turn to the construction of a category of abacus programs. Before we can proceed, we require some notation and terminology relating to the free monoid $X^*$ on a set $X$, and also to the the free monoid $(X^*)^*$ on $X^*$. Elements of $X^*$, which we will refer to as \emph{monomials over $X$}, are sequences of elements of $X$, and the monoid operation is concatenation of sequences. We write elements of $X^*$ by juxtaposing the necessary elements of $X$, as in $abc \in X^*$ for $a,b,c \in X$. We write $I$ for the empty such sequence. Note in particular that since $I$ is the unit of the monoid structure on $X^*$ that we have $IU = U = UI$ for any $U \in X^*$. The letters $U,V,W,\ldots$ will tend to denote monomials.

Elements of $(X^*)^*$, which we will refer to as \emph{polynomials over $X$}, are of course sequences of monomials with the monoid operation given by concatenation. We write elements of $(X^*)^*$ as sequences delimited by the symbol '$+$', as in $U + V + W \in (X^*)^*$ for $U,V,W \in X^*$. We write $0$ for the empty such sequence. Note in particular that since $0$ is the unit of the monoid structure on $(X^*)^*$ we have $0 + P = P = P + 0$ for any $P \in (X^*)^*$. The letters $P,Q,R,\ldots$ will tend to denote polynomials.

For example, the following are all monomials over $\{A,B\}$:
\begin{mathpar}
  I

  A

  AB

  BBB

  BAABBBA
\end{mathpar}
and the following are all polynomials over $\{A,B\}$ that are not monomials:
\begin{mathpar}
  0

  BA + A

  AAB + BBBBB

  A + A + 0
\end{mathpar}

We proceed to construct a category of abacus programs:
\begin{definition}\label{def:abacus-programs}
  Let $\A$ be the category defined as follows: objects of $\A$ are polynomials over a single generator $N$, so that $\A_0 = (\{N\}^*)^*$. Arrows of $\A$ are generated according to the following inference rules:
  \begin{mathparpagebreakable}
    \inferrule{U,V \text{ monomial}}
              {\s_{U,V} : UNV \to UNV}

    \inferrule{U,V, \text{ monomial}}
              {\z_{U,V} : UV \to UNV}

    \inferrule{U,V \text{ monomial}}
              {\p_{U,V} : UNV \to UV + UNV}

    \inferrule{U \text{ monomial}}             
              {1_U : U \to U}

    \inferrule{U \text{ monomial}}
              {\eta_U : 0 \to U}

    \inferrule{U \text{ monomial}}
              {\mu_U : U + U \to U}

    \inferrule{\text{}}
              {1_0 : 0 \to 0}

    \inferrule{U,W \text{ monomial}}
              {\sigma^+_{U,W} : U + W \to W + U}

    \inferrule{f : P \to Q \\ g : Q \to R}
              {fg : P \to R}

    \inferrule{f : P \to Q \\ g : R \to S}              
              {f + g : P + R \to Q + S}

    \inferrule{W \text{ monomial} \\ P,Q \text{ polynomial} \\ f : P + W \to Q + W}
              {\mathsf{Tr}^W_{P,Q}(f) : P \to Q}
  \end{mathparpagebreakable}
  These arrows are subject to the axioms of a traced distributive monoidal category. We give these explicitly in stages. At each stage, we will inductively construct the relevant structural morphisms, and then give the corresponding equations.

  First, we ensure that $(\A,+,0)$ is a monoidal category. This requires an identity morphism $1_P : P \to P$ for any polynomial $P$, which we define inductively. Any such polynomial $P$ is either $0$, in which case $1_0$ exists by assumption, or $U+Q$ for some monomial $U$ and polynomial $Q$, in which case we define $1_{U+Q} = 1_U + 1_Q$, noting that $1_U$ is assumed to exist for all monomials $U$. Now the following equations ensure that $(\A,+,0)$ is in fact monoidal:
  \begin{mathpar}
    \textbf{[S1]} (fg)h = f(gh)
    
    \textbf{[S2]} 1_P f = f = f 1_Q
    \\
    \textbf{[S3]} (f + g)(h + k) = fh + gk
    
    \textbf{[S4]} 1_0 + f = f = f + 1_0
    
    \textbf{[S5]} (f + g) + h = f + (g + h)
  \end{mathpar}

  Next, we must ensure that $(\A,+,0,\sigma^+)$ is a symmetric monoidal category. For any monomial $U$ we define $\sigma^+_{U,Q}$ for an arbitrary polynomial $Q$ by structural induction on $Q$ as in:
  \begin{mathpar}
    \sigma^+_{U,0} = 1_U
    
    \sigma^+_{U,V+Q} = (\sigma^+_{U,V} + 1_Q)(1_V + \sigma^+_{U,Q})
  \end{mathpar}
  and now for any polynomials $P,Q$ we define $\sigma^+_{P,Q}$ by structural induction on $P$ as in:
  \begin{mathpar}
    \sigma^+_{0,Q} = 1_Q
    
    \sigma^+_{U+P,Q} = (1_U + \sigma^+_{P,Q})(\sigma^+_{U,Q} + 1_P)
  \end{mathpar}
  and we may now state the corresponding equations:
  \begin{mathpar}
    \textbf{[S6]} \sigma^+_{P,Q}\sigma^+_{Q,P} = 1_{P+Q}

    \textbf{[S7]} (f + 1_Q)\sigma^+_{P',Q} = \sigma^+_{P,Q}(1_Q + f)
  \end{mathpar}

  For the cocartesian structure, we define $\eta_P$ and $\mu_P$ for arbitrary polynomials $P$ by structural induction on $P$, as in:
  \begin{mathpar}
    \eta_0 = 1_0
          
    \eta_{U+P} = \eta_U + \eta_P
    \\
    \mu_0 = 1_0
    
    \mu_{U+P} = (1_U + \sigma^+_{P,U} + 1_P)(\mu_U + \mu_P)
  \end{mathpar}
  and the corresponding equations are as follows:
  \begin{mathpar}
    \textbf{[S8]} (\mu_P + 1_P)\mu_P = (1_P + \mu_P)\mu_P

    \textbf{[S9]} \sigma^+_{P,P}\mu_P = \mu_P

    \textbf{[S10]} (\eta_P + 1_P)\mu_P = 1_P

    \textbf{[S11]} (f + f)\mu_{Q} = \mu_Pf

    \textbf{[S12]} \eta_Pf = \eta_{Q}
  \end{mathpar}

  Finally, for any polynomials $P,Q,R$ and $f : P+R \to Q+R$ we define $\mathsf{Tr}^R_{P,Q}$ by structural induction on $R$, as in:
  \begin{mathpar}
    \mathsf{Tr}^0_{P,Q}(f) = f

      \mathsf{Tr}^{W+R}_{P,Q}(f) = \mathsf{Tr}^{W}_{P,Q}(\mathsf{Tr}^{R}_{P+W,Q+W}(f))
  \end{mathpar}
  with the corresponding equations being:
  \begin{mathpar}
    \textbf{[S13]} g\mathsf{Tr}^R_{P,Q}(f)h = \mathsf{Tr}^R_{P',Q'}((g + 1_R)f(h+1_R))
    
    \textbf{[S14]} 1_S + \mathsf{Tr}^R_{P,Q}(f) = \mathsf{Tr}^R_{S+P,S+Q}(1_S + f)
    
    \textbf{[S15]} \mathsf{Tr}^R_{P,Q}((1_P + h)f) = \mathsf{Tr}^{R'}_{P,Q}(f(1_Q + h))
    
    \textbf{[S16]} \mathsf{Tr}^P_{P,P}(\sigma^+_{P,P}) = 1_P
  \end{mathpar}

  Further, morphisms of $\A$ are subject to a number of equations concerning $\mathsf{succ}$, $\mathsf{zero}$, and $\mathsf{pred}$:
  \begin{itemize}
  \item[] \textbf{[N1]} $(\z_{U,V} + \s_{U,V})\mu_{UNV}\p_{U,V} = 1_{UV+UNV}$

  \item[] \textbf{[N2]} $\p_{U,V}(\z_{U,V} + \s_{U,V})\mu_{UNV} = 1_{UNV}$

  \item[] \textbf{[N3]} $\s_{U,VNW} \s_{UNV,W} = \s_{UNV,W} \s_{U,VNW}$

  \item[] \textbf{[N4]} $\s_{U,VW} \z_{UNV,W} = \z_{UNV,W} \s_{U,VNW}$
    
  \item[] \textbf{[N5]} $\z_{U,VNW} \s_{UNV,W} = \s_{UV,W} \z_{U,VNW}$

  \item[] \textbf{[N6]} $\z_{U,VW} \z_{UNV,W} = \z_{UV,W} \z_{U,VNW}$

  \item[] \textbf{[N7]} $\s_{U,VNW} \p_{UNV,W} = \p_{UNV,W} (\s_{U,VW} + \s_{U,VNW})$

  \item[] \textbf{[N8]} $\z_{U,VNW} \p_{UNV,W} = \p_{UV,W} (\z_{U,VW} + \z_{U,VNW})$

  \item[] \textbf{[N9]} $\s_{UNV,W} \p_{U,VNW} = \p_{U,VNW}(\s_{UV,W} + \s_{UNV,W})$

  \item[] \textbf{[N10]} $\z_{UNV,W} \p_{U,VNW} = \p_{U,VW}(\z_{UV,W} + \z_{UNV,W})$

  \item[] \textbf{[N11]} $\p_{U,VNW}(\p_{UV,W} + \p_{UNV,W}) \\= \p_{UNV,W}(\p_{U,VW} + \p_{U,VNW})(1_{UVW} + \sigma^+_{UNVW,UVNW} + 1_{UNVNW})$
  \end{itemize}
\end{definition}

Morphisms of $\A$ can be understood as a modified notion of abacus program in which the locations used by the program are tracked explicitly. This modified notion of abacus program has the same expressive power as the original one. Briefly, the arrows of our flowcharts become wires in string diagrams corresponding to the additive structure of $\A$, and every such wire is now labelled with the non-repeating list of location names used by the program at that point. Explicitly, there are now three sorts of instruction:
\begin{mathpar}
  \z_{U,V}
  \hspace{0.3cm}
  \leftrightsquigarrow
  \hspace{0.3cm}
  \includegraphics[height=1.7cm,align=c]{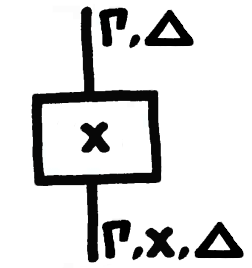}
  
  \s_{U,V}
  \hspace{0.3cm}
  \leftrightsquigarrow
  \hspace{0.3cm}
  \includegraphics[height=1.7cm,align=c]{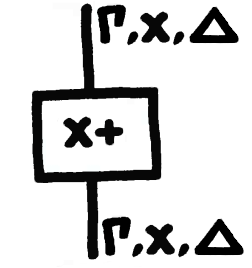}

  \p_{U,V}
  \hspace{0.3cm}
  \leftrightsquigarrow
  \hspace{0.3cm}
  \includegraphics[height=1.7cm,align=c]{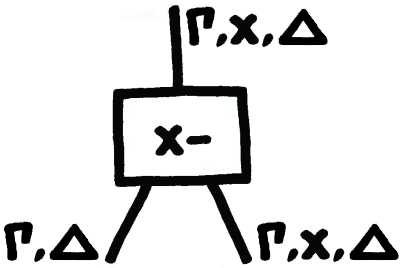}
\end{mathpar}
The instructions above middle and above right work as before, although note that the $X-$ instruction now removes the location $X$ from the list in the case where it no longer contains counters. The instruction above left is new, and adds a new location $X$ containing no counters to the list. We reiterate that in these diagrams the lists of locations are assumed to be non-repeating, which in particular means that whenever we write $\Gamma,X,\Delta$ it is implied that neither $\Gamma$ nor $\Delta$ contains $X$. These non-repeating lists correspond to the monomial part of objects in $\A$ with, for example, the list $X$,$Y$,$Z$,$W$ corresponding to the monomial $NNNN$. The other differences from unmodified abacus programs are superficial: instead of directed arrows we draw wires, adopting the convention that a wire connected to the top of an instruction box is ``incoming'' and an arrow connected to the bottom is ``outgoing''. Instruction boxes with multiple in-degree must now be constructed using the cocartesian monoidal structure, and arrows that point ``backwards'' must be constructed using the trace operator.

The location $X$ labelling each instruction serves only to indicate a position in the relevant list, and in $\A$ this positional information is present in the monomial subscripts of $\z$,$\s$, and $\p$. For example, the morphism $\s_{NN,N}$ of $\A$ corresponds to a modified abacus program concerned with a sequence of four locations that has the effect of adding a new counter to the third location in the sequence. This more structural representation means that modified abacus programs which are identical in all  but choice of location names correspond to the same morphism of $\A$. For example, recall the abacus program that moves all of the counters from location $X$ to location $Y$. From this one obtains a modified abacus program concerning only locations $X$ and $Y$ and the corresponding morphism of $\A$ as in:
\begin{mathpar}
  \mathsf{Tr}^{NN}_{NN,N}(\mu_{NN} \p_{I,N} (1_N + \s_{N,I}))
  \hspace{0.5cm}
  \leftrightsquigarrow
  \hspace{0.5cm}
  \includegraphics[height=2.8cm,align=c]{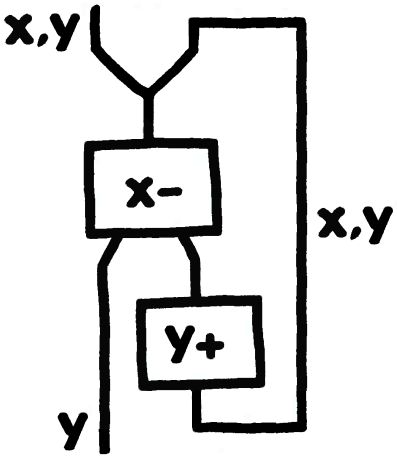}  
\end{mathpar}

The axioms \textbf{[N1-N11]} of $\A$ are more easily understood by considering the corresponding abacus program. For example, from this perspective the axiom \textbf{[N3]} states that it does not matter which order one adds counters to distinct locations in:
\begin{mathpar}
  \includegraphics[height=2.4cm,align=c]{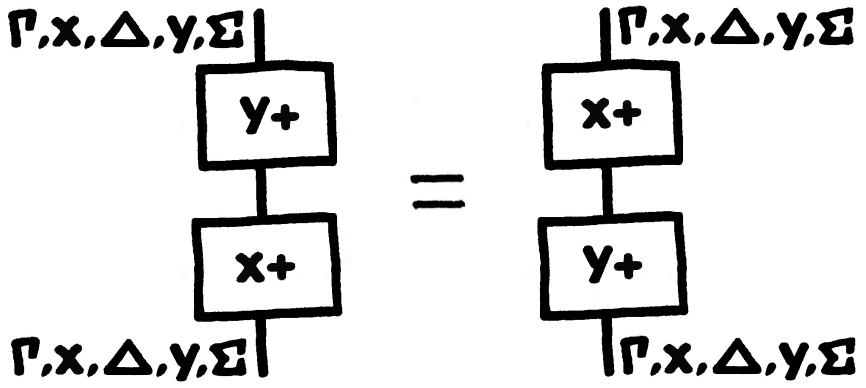}
\end{mathpar}
and the axiom \textbf{[N11]} states something similar, if more involved, about the predecessor operation:
\begin{mathpar}
  \includegraphics[height=3.5cm,align=c]{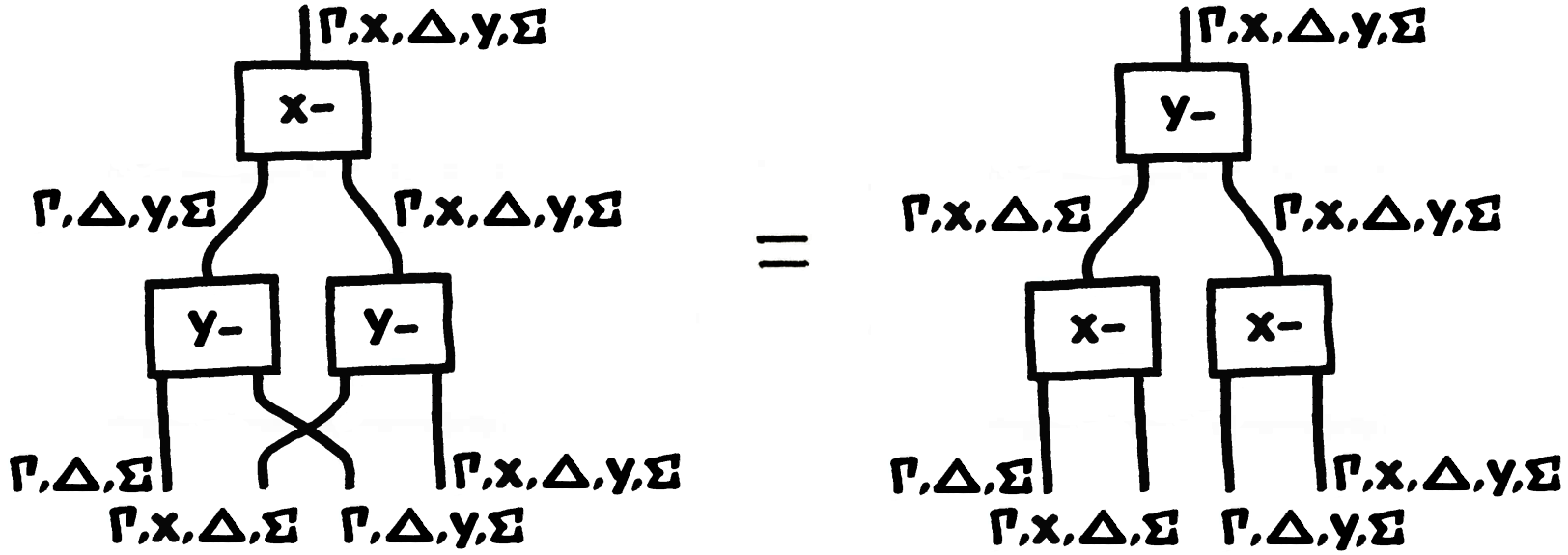}
\end{mathpar}

  
  
\noindent The axioms \textbf{[N4]} through \textbf{[N10]} are of a similar character. The axioms \textbf{[N1]} and \textbf{[N2]} are different, serving to ensure that $N$ is isomorphic to $I + N$. Notice that the none of the equations of $\A$ change the operational effect of the corresponding abacus program.

We proceed to show that $\A$ is an Elgot category. First, we require another monoidal structure $(\A,\otimes,I)$. We begin with the action of $\otimes$ on objects. Of course, $I$ will be the empty monomial. We define $UQ = U \otimes Q$ for $U$ monomial and $P$ polynomial by induction on $P$, as in:
\begin{mathpar}
  U0 = 0
  
  U(V+Q) = UV + UQ
\end{mathpar}
Next, we define $PQ = P \otimes Q$ for polynomials $P,Q$:
\begin{mathpar}  
  0P = P

  (U+P)Q = UQ + PQ
\end{mathpar}
Put another way, if $P = \Sigma_i U_i$ and $Q = \Sigma_j V_j$ then $P \otimes Q = \Sigma_i (\Sigma_j U_iV_j)$.

The definition of the multiplicative tensor product on morphisms is rather invovled. The first step is to inductively define two endofunctors $(U \ltimes -)$ and $(- \rtimes U)$ on $\A$ for any monomial $U$ as follows:
  \begin{multicols}{2}
  \begin{enumerate}
    \item[] $U \ltimes \s_{V,W} = \s_{UV,W}$
    \item[] $U \ltimes \z_{V,W} = \z_{UV,W}$
    \item[] $U \ltimes \p_{V,W} = \p_{UV,W}$
    \item[] $U \ltimes 1_V = 1_{UV}$
    \item[] $U \ltimes \eta_V = \eta_{UV}$
    \item[] $U \ltimes \mu_V = \mu_{UV}$
    \item[] $U \ltimes 1_0 = 1_0$
    \item[] $U \ltimes \sigma^+_{V,W} = \sigma^+_{UV,UW}$
    \item[] $U \ltimes fg = (U \ltimes f)(U \ltimes g)$
    \item[] $U \ltimes f + g = (U \ltimes f) + (U \ltimes g)$
    \item[] $U \ltimes \mathsf{Tr}^W_{P,Q} = \mathsf{Tr}^{UW}_{UP,UQ}(U \ltimes f)$
    \item[] $\s_{V,W} \rtimes U = \s_{V,WU}$
    \item[] $\z_{V,W} \rtimes U = \z _{V,WU}$
    \item[] $\p_{V,W} \rtimes U = \p_{V,WU}$
    \item[] $1_V \rtimes U = 1_{VU}$
    \item[] $\eta_V \rtimes U = \eta_{VU}$
    \item[] $\mu_V \rtimes U = \mu_{VU}$
    \item[] $1_0 \rtimes U = 1_0$
    \item[] $\sigma^+_{V,W} \rtimes U = \sigma^+_{VU,WU}$
    \item[] $fg \rtimes U = (f \rtimes U)(g \rtimes U)$
    \item[] $f + g \rtimes U = (f \rtimes U) + (g \rtimes U)$
    \item[] $\mathsf{Tr}^W_{P,Q} \rtimes U = \mathsf{Tr}^{WU}_{PU,QU}(f \rtimes U)$
  \end{enumerate}
  \end{multicols}
  \noindent We call $(U \ltimes -)$ and $(- \rtimes U)$ \emph{monomial left whiskering} and \emph{monomial right whiskering}, respectively.

  We also require the left distributors. While the right distributors in $\A$ will be identities, the left distributors $\delta^l_{P,Q,R}$ are defined by induction on $P$ as in:
  \begin{mathpar}
    \delta^l_{0,Q,R} = 1_0
    
    \delta^l_{U+P,Q,R} = (1_{UQ+UR} + \delta^l_{P,Q,R})(1_{UQ} + \sigma^+_{UR,PQ} + 1_{PR})
  \end{mathpar}
  \noindent Notice that for monomials $U$ we have $\delta^l_{U,Q,R} = \delta^l_{U+0,Q,R} = 1_{UP + UQ}$. 

  Now, for any polynomial $P$ in $\A$ we define an endofunctor $(P \ltimes -)$ of $\A$ called \emph{polynomial left whiskering}, defined on $f : R \to S$ as in:
  \begin{mathpar}
    0 \ltimes f = 1_0

    U + P \ltimes f = (U \ltimes f) + (P \ltimes f)
  \end{mathpar}
  \noindent Similarly, we define an endofunctor $(- \rtimes P)$ of $\A$ called \emph{polynomial right whiskering}, defined on $f : R \to S$ as in:
  \begin{mathpar}
    f \rtimes 0 = 1_0
    
    f \rtimes U + P = \delta^l_{R,U,P}((f \rtimes U) + (f \rtimes P))(\delta^l_{S,U,P})^{-1}
  \end{mathpar}
Finally, the action of the multiplicative tensor product on morphsims $f : R \to S$ and $g : R' \to S'$ is given by $f \otimes g = (f \rtimes R')(S \ltimes g) = (R \ltimes g)(f \rtimes S')$ and we have:
\begin{theorem}\label{thm:abacus-elgot}
  $(\A,(+,0,\sigma^+),(\otimes,I),(\mu,\eta),\mathsf{Tr},(1,1),(\delta^l,1),(N,(\z_{I,I}+\s_{I,I})\mu_N))$ is a right-strict Elgot category.
\end{theorem}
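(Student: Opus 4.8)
The plan is to verify, clause by clause, that the data in the statement meet the definition of a right-strict Elgot category, namely: a distributive monoidal category that is right-strict, whose additive structure is traced, equipped with an object $N$ and an isomorphism $\iota : I + N \to N$, with the trace uniform. Most of the labour is bookkeeping forced by the presentation of $\A$, so I would organise it around three structures checked in turn --- the traced cocartesian additive structure, the multiplicative monoidal structure, and their distributive interaction --- followed by the two conditions peculiar to Elgot categories: invertibility of $\iota$ and uniformity.

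The additive part is essentially true by construction. The generators together with the equations \textbf{[S1]}--\textbf{[S16]} are exactly the axioms of a traced cocartesian monoidal category: \textbf{[S8]}--\textbf{[S12]} furnish the commutative-comonoid and naturality data of Definition~\ref{def:cocartesian-monoidal-category}, while \textbf{[S13]}--\textbf{[S16]} are \textbf{[TR3]}--\textbf{[TR6]} of Definition~\ref{def:traced-monoidal-category}. What genuinely needs doing here is to confirm that the inductively defined structural morphisms ($1_P$, $\sigma^+_{P,Q}$, $\mu_P$, $\eta_P$, and $\mathsf{Tr}^R_{P,Q}$ for arbitrary polynomials) cohere: that the component-wise clauses assemble into honest (monoidal) natural transformations, and that the single-monomial trace recursion delivers \textbf{[TR1]} (its base case) and the general \textbf{[TR2]} (by induction on the traced-out polynomial). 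These are routine inductions on the polynomial structure of objects.

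The core of the proof is the multiplicative structure $(\A,\otimes,I)$, which is assembled from whiskerings, and I would proceed in the order the definitions are layered. First, show that each monomial whiskering $U \ltimes -$ and $- \rtimes U$ is a well-defined strict (additive-)monoidal endofunctor preserving the trace; since these are specified on generators, this reduces to checking that the clauses respect every relation \textbf{[S1]}--\textbf{[S16]} and \textbf{[N1]}--\textbf{[N11]}. Next, promote these to the polynomial whiskerings $P \ltimes -$ and $- \rtimes P$, where the left distributor $\delta^l$ enters the right-whiskering clause. Finally, define $f \otimes g$; the crucial lemma is that the two expressions $(f \rtimes R')(S \ltimes g)$ and $(R \ltimes g)(f \rtimes S')$ coincide, i.e.\ that left and right whiskering interchange. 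This interchange together with functoriality of the whiskerings yields bifunctoriality of $\otimes$, and the associativity and unit coherences of $(\A,\otimes,I)$ then follow by induction on objects. I expect this whiskering bookkeeping --- establishing the interchange law and bifunctoriality against the full list of relations --- to be the main obstacle, in the sense that it is the long but mechanical content deferred to Appendix~\ref{sec:proof-appendix}.

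With both monoidal structures available I would check distributivity. Because $\lambda^\bullet$, $\rho^\bullet$, and $\delta^r$ are taken to be identities, right-strictness collapses the coherence requirements to the right-strict forms of \textbf{[B1]}--\textbf{[B22]} (Appendix~\ref{sec:distributive-monoidal-categories}), and the substantive point is that $\delta^l$, as defined, is a natural isomorphism satisfying them, verified by induction from the clause for $\delta^l_{U+P,Q,R}$. Invertibility of $\iota = (\z_{I,I}+\s_{I,I})\mu_N$ is then immediate: instantiating $U = V = I$ in \textbf{[N1]} and \textbf{[N2]} says precisely that $\p_{I,I}$ is a two-sided inverse of $\iota$, so $(N,\iota)$ is a natural numbers algebra with $I + N \cong N$. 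The remaining clause, uniformity of the trace (Definition~\ref{def:uniform-traced-monoidal-category}), is the conceptually delicate one: it is a conditional equation and is not visibly among the imposed identities \textbf{[S1]}--\textbf{[S16]}. I would expect it to be part of the presentation of $\A$ (as is in any case required for $\A$ to be an \emph{Elgot} category and, later, an initial one), so that the task reduces to confirming that the whiskering functors, the tensor $\otimes$, and the distributor $\delta^l$ all descend to the resulting quotient --- equivalently, that they are compatible with the iteration operator $f^\dagger = \mathsf{Tr}^A_{A,X}(\mu_A f)$ under which uniformity is most naturally stated.
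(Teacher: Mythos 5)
Your proposal follows essentially the same route as the paper's Appendix~\ref{sec:proof-appendix}: the additive traced cocartesian structure holds by construction from \textbf{[S1]}--\textbf{[S16]}, the multiplicative structure is assembled in layers from monomial to polynomial whiskerings with the interchange law $(f \rtimes R)(Q \ltimes g) = (P \ltimes g)(f \rtimes S)$ as the crux (Lemma~\ref{lem:polynomial-interchange}, whose base cases consume \textbf{[N3]}--\textbf{[N11]}), right-strict distributivity follows from the $\delta^l$ lemmas, and \textbf{[N1]}--\textbf{[N2]} instantiated at $U = V = I$ make $\p_{I,I}$ a two-sided inverse of $\iota = (\z_{I,I}+\s_{I,I})\mu_N$. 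You also correctly anticipated that uniformity is imposed as part of the presentation of $\A$ rather than derived from \textbf{[S1]}--\textbf{[S16]}, which is exactly how the paper treats it --- indeed the trace cases of its interchange lemma invoke the uniformity axiom directly as an inference rule, which is the same verification you describe as checking that the whiskerings descend to the quotient.
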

\begin{proof}
  See Appendix~\ref{sec:proof-appendix}.
\end{proof}

The construction of the category $\A$ and the proof of Theorem~\ref{thm:abacus-elgot} are adapted from the construction of the category of tape diagrams over a monoidal signature and the proof that it forms a rig category with finite biproducts found in Section 5 of~\cite{Bonchi2023}. Therein, the authors begin with a monoidal signature $\Sigma$, construct the symmetric monoidal category $C_\Sigma$ that it presents, and then freely add finite biproducts to the underlying category. The finite biproducts become the additve structure of the resulting rig category, while the multiplicative strucutre is essentially inherited from the monoidal structure of $C_\Sigma$. While this approach can also be used to construct a distributive monoidal category by freely adding finite coproducts instead of finite biproducts, it is not entirely suitable for the situation considered in this paper. In particular, we cannot include the generating morphisms $\p_{U,V} : UNV \to UV + UNV$ as part of a monoidal signature since their codomain involves the additive structure. We instead perform the construction in a single step, relying on the axioms $\textbf{[N3-N11]}$ to ensure that the result is a distributive monoidal category. Curiously, for both the original construction of~\cite{Bonchi2023} and our construction here to work in the case where the additive structure is traced, the trace operator must be assumed to be uniform. This is also observed in more recent work on tape diagrams~\cite{Bonchi2024} involving the trace.

The promised initiality result follows almost immediately:
\begin{theorem}\label{thm:abacus-initial}
$(\A,(+,0,\sigma^+),(\otimes,I),(\mu,\eta),\mathsf{Tr},(1,1),(\delta^l,1),(N,(\z_{I,I}+\s_{I,I})\mu_N))$ is initial in the category of right-strict Elgot categories and functors between them that strictly preserve the Elgot category structure. 
\end{theorem}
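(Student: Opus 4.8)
The plan is to verify the universal property directly: given an arbitrary right-strict Elgot category $\mathbb{D}$ with distinguished natural numbers algebra $(N_{\mathbb{D}},\iota_{\mathbb{D}})$ and $[z_{\mathbb{D}},s_{\mathbb{D}}] = \iota_{\mathbb{D}}$, I would construct a functor $F : \A \to \mathbb{D}$ strictly preserving all the Elgot structure and show it is the unique such functor. The whole argument is organised around the observation that $\A$ is, by construction, a presentation of a right-strict Elgot category by generators and relations, so $F$ is forced at every stage and the only real work is checking that the forced assignment respects the relations.

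First I would fix $F$ on objects. Preservation of $N$, $I$, $0$, $\otimes$, and $+$ forces $F(N) = N_{\mathbb{D}}$, and since every object of $\A$ is a polynomial in the single generator $N$ --- a $+$-sum of $\otimes$-powers of $N$ --- this determines $F$ on objects completely. Right-strictness is exactly what makes this well-defined on the nose, since there are then no $\lambda^\bullet$, $\rho^\bullet$, or $\delta^r$ coherence isomorphisms to track. Next I would fix $F$ on generators. The structural generators $1$, $\eta$, $\mu$, $\sigma^+$ and the trace must go to their counterparts in $\mathbb{D}$. For the numerical generators, note that $\iota = (\z_{I,I} + \s_{I,I})\mu_N = [\z_{I,I},\s_{I,I}]$ in $\A$, so $\z_{I,I} = \ip_0\iota$ and $\s_{I,I} = \ip_1\iota$, while \textbf{[N1]} and \textbf{[N2]} identify $\p_{I,I}$ with $\iota^{-1}$; preservation of $\iota$ therefore forces $F(\z_{I,I}) = z_{\mathbb{D}}$, $F(\s_{I,I}) = s_{\mathbb{D}}$, and $F(\p_{I,I}) = \iota_{\mathbb{D}}^{-1}$. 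Every remaining numerical generator is a monomial whiskering of a basic one --- for instance $\s_{U,V} = 1_U \otimes \s_{I,I} \otimes 1_V$, and similarly for $\z$ and $\p$ --- so preservation of $\otimes$ forces its image as well. Finally $F$ extends to all arrows by the evident induction, with $F(fg) = F(f)F(g)$, $F(f+g) = F(f)+F(g)$, and $F(\mathsf{Tr}^W_{P,Q}(f)) = \mathsf{Tr}^{F(W)}_{F(P),F(Q)}(F(f))$, each clause again forced by preservation of the relevant operation.

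The substance of the proof is then well-definedness: that $F$ respects the defining equations of $\A$. The equations \textbf{[S1]}--\textbf{[S16]} are precisely the axioms of a traced symmetric monoidal category instantiated at the generators, so they hold in $\mathbb{D}$ because $\mathbb{D}$ carries such structure; \textbf{[N1]} and \textbf{[N2]} hold because $\iota_{\mathbb{D}}$ is invertible, which is part of being an Elgot category; and \textbf{[N3]}--\textbf{[N11]} assert commutativities between whiskered copies of $s$, $z$, and $\iota^{-1}$ acting at disjoint locations, which in $\mathbb{D}$ follow from the bifunctoriality of $\otimes$ and the interchange law for left and right whiskering, with \textbf{[N11]} additionally using naturality of $\delta^l$ and $\sigma^+$. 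This is the only place requiring genuine computation, and it is the same bookkeeping already carried out in establishing Theorem~\ref{thm:abacus-elgot}; in effect, well-definedness of $F$ is an instance of the universal property of the presentation.

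Uniqueness is then immediate, since the steps above show that any structure-preserving functor is forced on objects and on every generator and must commute with composition, $+$, and trace; as every arrow of $\A$ is built from generators by exactly these operations, such a functor necessarily coincides with $F$. I expect the main obstacle to be the well-definedness check for \textbf{[N3]}--\textbf{[N11]}, but as noted this reduces to interchange for whiskering and is inherited directly from the Elgot structure of $\mathbb{D}$; this is why, as remarked after Theorem~\ref{thm:abacus-elgot}, the initiality result follows almost immediately.
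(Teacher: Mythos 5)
Your proposal is correct and takes essentially the same approach as the paper: the paper likewise treats $\A$ as a presentation and argues that strict preservation of the Elgot structure forces $F$ on objects, forces $F(\s_{I,I}) = \ip_1\iota$ and $F(\z_{I,I}) = \ip_0\iota$ (hence all whiskered generators $\s_{U,V}$, $\z_{U,V}$ via preservation of $\otimes$), and forces $F(\p_{U,V})$ as the inverse of $F((\z_{U,V}+\s_{U,V})\mu_{UNV})$. The only difference is that you additionally spell out the existence half --- well-definedness of the forced assignment against \textbf{[S1]}--\textbf{[S16]} and \textbf{[N1]}--\textbf{[N11]} (to be fully complete you should also note that the identifications generated by the uniformity rule in $\A$ are respected precisely because the trace of the target Elgot category is uniform) --- which the paper's proof leaves implicit in its appeal to the generators-and-relations construction.
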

\begin{proof}
  Say we have another right-strict Elgot category $(\C,(+,0,\sigma^+),(\otimes,I),(\mu,\eta),\mathsf{Tr},(1,1),(\delta^l,1),(N,\iota))$. Any functor $F : \A \to \C$ that strictly preserves the Elgot category structure must have $FN = N$. Since $F$ preserves the multiplicative monoidal structure this means that $F(N^n) = F(N)^n$, and so the value of the object mapping of $F$ on monomials is forced. The object mapping of $F$ on the polynomials is forced because $F$ is assumed to strictly preserve the cocartesian monoidal structure.

  On morphisms, the requirements that $F$ be a functor, strictly preserves the cocartesian monoidal structure, and strictly preserves the trace operator means that the only choice we might have is where to map the $\z$, $\s$, and $\p$ arrows. Since $F$ strictly preserves the distinguished object $N$ and isomorphism $\iota : I+N \to N$ we must have t $F(\s_{I,I}) = \ip_1\iota$. Together with the assumption that $F$ strictly preserves the multiplicative monoidal structure this means that we must have $F(\s_{U,V}) = F(1_U \otimes \s_{I,I} \otimes 1_V) = 1_U \otimes \ip_1\iota = 1_V$. Similarly, we must have $F(\z_{U,V}) = 1_U \otimes \z_{I,I} \otimes 1_V$. Since $F$ is a functor and $\p_{U,V}$ is inverse to $(\z_{U,V} + \s_{U,V})\mu_{UNV}$ in $\A$, we know that $F(\p_{U,V})$ must be the inverse of $F((\s_{U,V} + \z_{U,V})\mu_{UNV})$, and so the definition of the arrow mapping of $F$ is forced. The claim follows.
\end{proof}

Finally, our category of abacus programs strongly represents the partial recursive functions.
\begin{theorem}\label{thm:abacus-strong-representability}
  $\A$ strongly represents the partial recursive functions.
\end{theorem}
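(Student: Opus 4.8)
The plan is to verify the five hypotheses of Theorem~\ref{thm:plotkin-strong-representable} for $\A$, equipped with the natural numbers algebra $(N,\iota)$ where $\iota = (\z_{I,I} + \s_{I,I})\mu_N$. By Theorem~\ref{thm:abacus-elgot}, $\A$ is a right-strict Elgot category, hence in particular a pre-Elgot category, so Lemma~\ref{lem:pre-elgot-representable} immediately supplies three of the five hypotheses: $\iota$ is an isomorphism, $(N,\iota)$ is a weak left natural numbers object, and $(N,\iota^{-1})$ is a weakly final natural numbers coalgebra. It remains to establish the two arithmetic hypotheses, namely that $\underline 0 \neq \underline 1$ (equivalently $z \neq zs$ where $[z,s] = \iota$), and that every partial function strongly representable in $\A$ is partial recursive.

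For $\underline 0 \neq \underline 1$ I would exhibit a structure-preserving functor out of $\A$ into a model in which the two numerals are visibly distinct. The category $\mathsf{Par}$ of sets and partial functions, suitably strictified, is a right-strict Elgot category whose canonical natural numbers algebra $(\mathbb{N}, \mathsf{zero}, \mathsf{succ})$ is strong. By initiality (Theorem~\ref{thm:abacus-initial}) there is a unique functor $F : \A \to \mathsf{Par}$ strictly preserving the Elgot structure; since $F$ preserves $N$ and $\iota$ it sends each numeral $\underline n$ to $n \in \mathbb{N}$. As $\mathsf{zero} \neq \mathsf{zero}\,\mathsf{succ}$ in $\mathsf{Par}$, we conclude $\underline 0 \neq \underline 1$ in $\A$. (Concretely, $F$ is the operational semantics sending an abacus program to the partial function it computes on states.) By the lemma showing that an invertible natural numbers algebra with $\underline 0 \neq \underline 1$ is strong, it follows that $(N,\iota)$ is a strong natural numbers algebra, a fact needed below.

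For the final hypothesis, suppose $g : N^n \to N$ strongly represents a partial function $f$. Since a morphism strongly represents at most one partial function, $f = \overline g$, whose graph is
\[
\{(k_1,\ldots,k_n,k) : (\underline{k_1} \otimes \cdots \otimes \underline{k_n})g = \underline k \text{ in } \A\}.
\]
The key observation is that equality of parallel morphisms in $\A$ is recursively enumerable: $\A$ is presented by finitely many schemes of generating morphisms and finitely many equational axiom schemes, all indexed effectively by monomials and polynomials, so two parallel terms are equal if and only if they are connected by a finite equational derivation, and such derivations can be enumerated. Consequently the graph above is recursively enumerable, by enumerating derivations while ranging over all input tuples and all target numerals $\underline k$. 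Moreover it is single-valued: if $(\vec k, k)$ and $(\vec k, k')$ both lie in the graph, then $\underline k = \underline{k'}$ in $\A$, whence $k = k'$ because $(N,\iota)$ is strong. A single-valued recursively enumerable relation is the graph of a partial recursive function, so $f = \overline g$ is partial recursive.

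With all five hypotheses in hand, Theorem~\ref{thm:plotkin-strong-representable} yields that every partial recursive function is strongly representable in $\A$ relative to $(N,z,s)$, which is the claim. I expect the main obstacle to be this last hypothesis rather than the routine structural checks: the crucial, and perhaps least obvious, point is that strong representability in a syntactically presented category such as $\A$ automatically entails partial recursiveness, via the recursive enumerability of the equational theory together with single-valuedness inherited from strongness. A secondary technical nuisance is pinning down a genuinely right-strict model of partial functions for the separation $\underline 0 \neq \underline 1$; this is where the strictness bookkeeping, elsewhere deferred, must be taken seriously, though it can be sidestepped by constructing the operational-semantics functor $F$ by hand and checking directly that it respects the axioms \textbf{[S1]}--\textbf{[S16]} and \textbf{[N1]}--\textbf{[N11]}.
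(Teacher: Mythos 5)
Your proposal is correct and follows the same skeleton as the paper's proof --- verify the hypotheses of Theorem~\ref{thm:plotkin-strong-representable}, with three of them discharged via Theorem~\ref{thm:abacus-elgot} and Lemma~\ref{lem:pre-elgot-representable} --- but you substantiate the two arithmetic hypotheses quite differently, and in both cases more rigorously than the paper does. For $\underline{0} \neq \underline{1}$ the paper simply writes ``obviously''; since $\A$ is a quotient of a freely generated structure, distinctness of two morphisms genuinely requires exhibiting an invariant, and your separation via the (strictified) operational-semantics functor $F : \A \to \mathsf{Par}$ supplies exactly that --- it is in effect a formalisation of the paper's earlier remark that none of the equations of $\A$ change the operational effect of the corresponding abacus program. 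For the hypothesis that every strongly representable partial function is partial recursive, the paper identifies $\overline{f}$ with the partial function computed by the associated abacus program and invokes the classical fact, due to Lambek, that abacus-computable functions are partial recursive; you instead argue purely recursion-theoretically, observing that the equational theory of $\A$ is recursively enumerable (countably many effectively indexed generator and axiom schemes, with provable equality enumerable via finite derivations), so the graph of $\overline{g}$ is r.e.\ and single-valued, hence the graph of a partial recursive function. Your route is more self-contained --- it needs neither the well-definedness of the operational semantics on equivalence classes nor Lambek's equivalence of abacus computability with partial recursiveness, only the standard fact that a partial function with r.e.\ graph is partial recursive --- while the paper's route is shorter and reuses the semantics it implicitly needs anyway for the numeral separation. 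Two small remarks: your appeal to Theorem~\ref{thm:abacus-initial} for the \emph{existence} of $F$ leans on an initiality proof that, as written in the paper, argues only that the functor is forced, but you anticipate this correctly by offering to construct $F$ by hand and verify \textbf{[S1]}--\textbf{[S16]} and \textbf{[N1]}--\textbf{[N11]} directly; and once $g$ is assumed to strongly represent some partial function $f$, the single-valuedness of your displayed relation is automatic (it is literally the graph of $f$), so the detour through strongness of $(N,\iota)$, while valid, is not actually needed at that point.
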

\begin{proof}
  We show that the conditions of Theorem~\ref{thm:plotkin-strong-representable} are satisfied. By assumption we have that $\iota : I + N \to N$ is an isomorphism. Obviously $\underline{0} \neq \underline{1}$ in $\A$. Notice that the partial function $\overline{f} : \mathbb{N}^n \to \mathbb{N}$ strongly represented by some morphism $f : N^n \to N$ of $\A$ coincides with the partial function implemented by the associated abacus program. It follows that any such $\overline{f}$ is partial recursive. That the remaining conditions are satisfied follows from Lemma~\ref{lem:pre-elgot-representable}.
\end{proof}

In particular, since any partial function that is strongly representable in $\A$ is partial recursive, we have:
\begin{corollary}\label{cor:final}
$\A$ strongly represents all and only the partial recursive functions.
\end{corollary}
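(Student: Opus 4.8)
The plan is to combine Theorem~\ref{thm:abacus-strong-representability} with the observation already developed in the text that the partial function strongly represented by a morphism of $\A$ is exactly the one computed by the associated abacus program. Theorem~\ref{thm:abacus-strong-representability} already gives one direction: every partial recursive function is strongly represented in $\A$. What remains for the corollary is the converse containment, namely that \emph{only} partial recursive functions arise this way, so that the class of strongly represented functions is precisely the partial recursive functions and no larger.

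First I would recall that a morphism $f : N^n \to N$ of $\A$ strongly represents at most one partial function $\overline{f} : \mathbb{N}^n \to \mathbb{N}$, as established in the discussion of strong representability following Theorem~\ref{thm:plotkin-strong-representable}. Next I would invoke the identification of morphisms of $\A$ with (modified) abacus programs: each such $f$ corresponds to an abacus program, and by the soundness of the axioms \textbf{[N1]}--\textbf{[N11]} (which, as the text stresses, never alter the operational effect of the corresponding program) the partial function $\overline{f}$ that $f$ strongly represents coincides with the input-output behaviour of that program on numeral inputs.

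The key step is then the classical fact, due to Lambek~\cite{Lambek1961} and recounted in~\cite[Chapter~5]{Boolos2002}, that the partial function computed by any abacus program is partial recursive. Since $\overline{f}$ is computed by an abacus program, it is partial recursive. Thus every partial function strongly representable in $\A$ is partial recursive, which is the ``only'' half of the statement.

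Combining the two directions yields the corollary: by Theorem~\ref{thm:abacus-strong-representability} the partial recursive functions are all strongly represented in $\A$, and by the argument above nothing outside the partial recursive functions is strongly represented, so $\A$ strongly represents all and only the partial recursive functions. I expect the main obstacle to be purely expository rather than mathematical, since the substantive content has already been discharged inside the proof of Theorem~\ref{thm:abacus-strong-representability}; the corollary is essentially a repackaging of the clause ``any partial function that is strongly representable in $\A$ is partial recursive'' together with the representability already proved.
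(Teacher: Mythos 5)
Your proposal is correct and matches the paper's own treatment: the corollary is just the clause ``every partial function strongly representable in $\A$ is partial recursive'' (established inside the proof of Theorem~\ref{thm:abacus-strong-representability} via the identification of morphisms of $\A$ with abacus programs and Lambek's classical result) combined with the strong representability of all partial recursive functions from that same theorem. The paper states the corollary as immediate for exactly this reason, so your argument is a faithful, slightly more explicit unpacking of the intended proof.
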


\section{Conclusions Future Work}\label{sec:conclusions}
We have introduced (pre-)Elgot categories and shown that any such category represents the partial recursive functions. Moreover, we have constructed an initial Elgot category of abacus programs, and have shown that it \emph{strongly} represents the partial recursive functions, and so, in a sense, generates them.

We end by mentioning a few directions for future work. First, it seems very sensible to impose further assumptions on Elgot categories to the effect that the multiplicative monoidal structure forms a cartesian restriction category (see e.g.,~\cite{Cockett2007,DiLiberti2021,Nester2024}). This more structured setting would more faithfully represent the partial recursive functions, and one imagines that stronger results would be obtainable. In particular, it seems likely that the initial such category is a Turing category~\cite{Cockett08}.

Second, there are natural questions concerning the relationship between Elgot categories and Elgot monads~\cite{Adamek2010}. The identity functor on any Elgot category is an Elgot monad, and we might for example ask when the Kleisli category of an Elgot monad is an Elgot category. Moreover, the reformulation of Elgot monads as \emph{while monads} given by Goncharov~\cite{Goncharov2022} suggests that the ideas presented here might lead to a categorical semantics for at least some of the ``While'' languages one encouters in computer science.

\bibliographystyle{./entics}
\bibliography{citations}

\begin{thebibliography}{1}
\providecommand{\url}[1]{\texttt{#1}}
\providecommand{\urlprefix}{ }
\providecommand{\eprint}[2][]{\url{#2}}

\bibitem{Civin-Yood}
Civin, P. and B.~Yood, \emph{Involutions on {B}anach algebras}, Pac. J. Math
  \textbf{9}, pages 219--226 (1959).
\newline\urlprefix\url{https://doi.org/10.2140/pjm.1959.9.415}

\bibitem{Clifford-Preston}
Clifford, A.~H. and G.~B. Preston, \emph{The Algebraic Theory of Semigroups},
  volume~7 of \emph{Math. Surveys}, AMS (1961).
\newline\urlprefix\url{https://www.ams.org/books/surv/007.1/surv007.1-endmatter.pdf}

\bibitem{Easdown-Munn}
Easdown, D. and W.~D. Munn, \emph{Trace functions on inverse semigroup
  algebras}, Bulletin of the Australian Mathematical Society \textbf{52}, pages
  359--372 (1995).
\newline\urlprefix\url{https://doi.org/10.1017/S0004972700014854}

\bibitem{Freyd}
Freyd, P., P.~O'Hearn, J.~Power, R.~Tennent and M.~Takeyama,
  \emph{Bireflectivity}, Electronic Notes in Theoretical Computer Science
  \textbf{1} (1995).
\newline\urlprefix\url{https://doi.org/10.1017/S0004972700014854}

\bibitem{Roscoe}
Roscoe, A.~W., \emph{The Theory and Practice of Concurrency}, Series in
  Computer Science, Prentice Hall (1987), ISBN 0136744095.

\bibitem{Shehadah}
Shehadah, A.~A., \emph{Embedding theorems for semigroups with involution},
  Ph.D. thesis, Purdue University, Indiana (1992). PhD thesis.
\newline\urlprefix\url{https://docs.lib.purdue.edu/dissertations/AAI8300958/}

\bibitem{Weyl}
Weyl, H., \emph{The Classical Groups}, Princeton University Press (1946), ISBN
  9780691057569.

\end{thebibliography}


\begin{thebibliography}{10}
\providecommand{\url}[1]{\texttt{#1}}
\providecommand{\urlprefix}{ }
\providecommand{\eprint}[2][]{\url{#2}}

\bibitem{Adamek2010}
Ad{\'a}mek, J., S.~Milius and J.~Velebil, \emph{{Equational Properties of
  Iterative Monads}}, Information and Computation \textbf{208}, pages
  1306--1348 (2010).

\bibitem{Bloom1993}
Bloom, S.~L., Z.~{\'E}sik, S.~L. Bloom and Z.~{\'E}sik, \emph{{Iteration
  Theories}}, Springer (1993).

\bibitem{Bonchi2024}
Bonchi, F., A.~Di~Giorgio and E.~Di~Lavore, \emph{{A Diagrammatic Algebra for
  Program Logics}}, arXiv preprint arXiv:2410.03561  (2024).

\bibitem{Bonchi2023}
Bonchi, F., A.~Di~Giorgio and A.~Santamaria, \emph{{Deconstructing the Calculus
  of Relations with Tape Diagrams}}, Proceedings of the ACM on Programming
  Languages \textbf{7}, pages 1864--1894 (2023).

\bibitem{Boolos2002}
Boolos, G.~S., J.~P. Burgess and R.~C. Jeffrey, \emph{{Computability and
  Logic}}, Cambridge university press (2002).

\bibitem{Bowman2011}
Bowman, W.~J., R.~P. James and A.~Sabry, \emph{{Dagger Traced Symmetric
  Monoidal Categories and Reversible Programming}}, Reversible Computation
  \textbf{7165}, pages 51--56 (2011).

\bibitem{Carboni1993}
Carboni, A., S.~Lack and R.~F. Walters, \emph{{Introduction to Extensive and
  Distributive Categories}}, Journal of Pure and Applied Algebra \textbf{84},
  pages 145--158 (1993).

\bibitem{Chen2021}
Chen, C.-H. and A.~Sabry, \emph{{A Computational Interpretation of Compact
  Closed Categories: Reversible Programming with Negative and Fractional
  Types}}, Proceedings of the ACM on Programming Languages \textbf{5}, pages
  1--29 (2021).

\bibitem{Cockett08}
Cockett, J. and P.~Hofstra, \emph{Introduction to turing categories}, Annals of
  Pure and Applied Logic \textbf{156}, pages 183--209 (2008).

\bibitem{Cockett2023}
Cockett, J. and J.-S. Lemay, \emph{{Classical Distributive Restriction
  Categories}}, arXiv preprint arXiv:2305.16524  (2023).

\bibitem{Cockett2007}
Cockett, J. and S.Lack, \emph{{Restriction Categories III: colimits, partial
  limits, and extensivity}}, Mathematical Structures in Computer Science
  \textbf{17}, pages 775--817 (2007).

\bibitem{Cockett1993}
Cockett, J. R.~B., \emph{{Introduction to Distributive Categories}},
  Mathematical Structures in Computer Science \textbf{3}, pages 277--307
  (1993).

\bibitem{DiLiberti2021}
Di~Liberti, I., F.~Loregian, C.~Nester and P.~Soboci{\'n}ski, \emph{Functorial
  semantics for partial theories}, Proceedings of the ACM on Programming
  Languages \textbf{5}, pages 1--28 (2021).

\bibitem{Elgot1975}
Elgot, C.~C., \emph{{Monadic Computation and Iterative Agebraic Theories}}, in:
  \emph{Studies in Logic and the Foundations of Mathematics}, volume~80, pages
  175--230, Elsevier (1975).

\bibitem{Fox1976}
Fox, T., \emph{{Coalgebras and Cartesian Categories}}, Communications in
  Algebra \textbf{4}, pages 665--667 (1976).
\newline\urlprefix\url{https://doi.org/10.1080/00927877608822127}

\bibitem{Goncharov2022}
Goncharov, S., \emph{{Shades of Iteration: From Elgot to Kleene}}, in:
  \emph{International Workshop on Algebraic Development Techniques}, pages
  100--120, Springer (2022).

\bibitem{Hasegawa1997}
Hasegawa, M., \emph{{Recursion from Cyclic Sharing: Traced Monoidal Categories
  and Models of Cyclic Lambda Calculi}}, in: \emph{Proceedings of the Third
  International Conference on Typed Lambda Calculi and Applications}, TLCA '97,
  page 196–213, Springer-Verlag, Berlin, Heidelberg (1997), ISBN 3540626883.

\bibitem{Hasegawa2003}
Hasegawa, M., \emph{The uniformity principle on traced monoidal categories},
  Electronic Notes in Theoretical Computer Science \textbf{69}, pages 137--155
  (2003).

\bibitem{Hofstra2021}
Hofstra, P. and P.~Scott, \emph{{Aspects of Categorical Recursion Theory}},
  Joachim Lambek: The Interplay of Mathematics, Logic, and Linguistics pages
  219--269 (2021).

\bibitem{James2012}
James, R.~P. and A.~Sabry, \emph{{Information Effects}}, ACM SIGPLAN Notices
  \textbf{47}, pages 73--84 (2012).

\bibitem{JohnsonYau2024}
Johnson, N. and D.~Yau, \emph{{Bimonoidal Categories, $E_n$-Monoidal
  Categories, and Algebraic $K$-Theory}}, American Mathematical Society (2024),
  ISBN 978-1-4704-7941-1.

\bibitem{Joyal1996}
Joyal, A., R.~Street and D.~Verity, \emph{{Traced Monoidal Categories}},
  Mathematical Proceedings of the Cambridge Philosophical Society \textbf{119},
  pages 447--468 (1996).

\bibitem{Kaarsgaard2021}
Kaarsgaard, R. and M.~Rennela, \emph{{Join Inverse Rig Categories for
  Reversible Functional Programming, and Beyond}}, arXiv preprint
  arXiv:2105.09929  (2021).

\bibitem{Kaarsgaard2019}
Kaarsgaard, R. and N.~Veltri, \emph{{En garde! Unguarded Iteration for
  Reversible Computation in the Delay Monad}}, in: \emph{International
  Conference on Mathematics of Program Construction}, pages 366--384, Springer
  (2019).

\bibitem{Khalil1993}
Khalil, W. and R.~F.~C. Walters, \emph{{An Imperative Language Based on
  Distributive Categories II}}, RAIRO-Theoretical Informatics and Applications
  \textbf{27}, pages 503--522 (1993).

\bibitem{Labella2003}
Labella, A., \emph{{Categories With Sums and Right Distributive Tensor
  Product}}, Journal of Pure and Applied Algebra \textbf{178}, pages 273--296
  (2003).

\bibitem{Lambek1961}
Lambek, J., \emph{{How to Program an Infinite Abacus}}, Canadian Mathematical
  Bulletin \textbf{4}, pages 295--302 (1961).

\bibitem{Lambek1986}
Lambek, J. and P.~Scott, \emph{Introduction to Higher-Order Categorical Logic},
  Cambridge University Press (1986).

\bibitem{Laplaza1971}
Laplaza, M.~L., \emph{{Coherence for Distributivity}}, in: \emph{Coherence in
  categories}, pages 29--65, Springer (2006).

\bibitem{Lawvere1964}
Lawvere, F.~W., \emph{{An Elementary Theory of the Category of Sets}},
  Proceedings of the national academy of sciences \textbf{52}, pages 1506--1511
  (1964).

\bibitem{Nester2024}
Nester, C., \emph{Partial and Relational Algebraic Theories}, Ph.D. thesis,
  Tallinn University of Technology (2024).

\bibitem{Pare1989}
Par{\'e}, R. and L.~Rom{\'a}n, \emph{{Monoidal Categories With Natural Numbers
  Object}}, Studia Logica \textbf{48}, pages 361--376 (1989).

\bibitem{Plotkin2013}
Plotkin, G., \emph{{Partial Recursive Functions and Finality}}, in:
  \emph{Computation, Logic, Games, and Quantum Foundations. The Many Facets of
  Samson Abramsky: Essays Dedicated to Samson Abramsky on the Occasion of His
  60th Birthday}, pages 311--326, Springer (2013).

\bibitem{Selinger2010}
Selinger, P., \emph{{A Survey of Graphical Languages for Monoidal Categories}},
  in: \emph{New Structures for Physics}, pages 289--355, Springer (2010).
\newline\urlprefix\url{https://doi.org/10.1007/978-3-642-12821-9_4}

\bibitem{Simpson2000}
Simpson, A. and G.~Plotkin, \emph{{Complete Axioms for Categorical Fixed-Point
  Operators}}, in: \emph{Proceedings of the 15th Annual IEEE Symposium on Logic
  in Computer Science}, LICS '00, page~30, IEEE Computer Society, USA (2000),
  ISBN 0769507255.

\bibitem{Stefanescu2000}
Stefanescu, G., \emph{{Network Algebra}}, Springer Verlag (2000).

\bibitem{Walters1989}
Walters, R., \emph{{Data Types in Distributive Categories}}, Bulletin of the
  Australian Mathematical Society \textbf{40}, page 79–82 (1989).
\newline\urlprefix\url{https://doi.org/10.1017/S0004972700003506}

\bibitem{Walters1992}
Walters, R. F.~C., \emph{{An Imperative Language Based on Distributive
  Categories}}, Mathematical Structures in Computer Science \textbf{2}, page
  249–256 (1992).
\newline\urlprefix\url{https://doi.org/10.1017/S0960129500001468}

\end{thebibliography}

\appendix

\section{Distributive Monoidal Categories}\label{sec:distributive-monoidal-categories}
\begin{definition}
  A \emph{distributive monoidal category} is a tuple $(\C,(+,0,\sigma^+),(\otimes,I),(\mu,\eta),(\lambda^\bullet,\rho^\bullet),(\delta^l,\delta^r))$ such that:
  \begin{itemize}
  \item $((\C,+,0,\sigma^+),(\mu,\eta))$ is a cocartesian monoidal category.
  \item $(\C,\otimes,I)$ is a monoidal category.
  \item $\lambda^\bullet$ and $\rho^\bullet$ are natural isomorphisms with components $\lambda^\bullet_A : 0 \otimes A \to 0$ and $\rho^\bullet_A : A \otimes 0 \to 0$ called the left and right \emph{annihilator}, respectively.
  \item $\delta^l$ and $\delta^r$ are natural isomorphisms with components
    \begin{mathpar}
      \delta^l_{A,B,C} : A \otimes (B + C) \to (A \otimes B) + (A \otimes C)

      \delta^r_{A,B,C} : (A + B) \otimes C \to (A \otimes C) + (B \otimes C)
    \end{mathpar}
    called the left and right \emph{distributor}, respectively.
  \item The coherence axioms \textbf{[B1]}-\textbf{[B22]} are satisfied. Before enumerating these axioms, we recall a common convention: When referring to the objects of a distributive monoidal category we will denote the multiplicative tensor product $\otimes$ by juxtaposition. For example, $A \otimes B$ becomes $AB$, $A \otimes B \otimes C$ becomes $ABC$, and $A \otimes (B + (C \otimes D))$ becomes $A(B + CD)$. The coherence axioms ask that for all objects $A,B,C,D$ of $\C$, we have:
    \begin{mathparpagebreakable}
          \begin{tikzcd}
      A(B+ C) \ar[rd,phantom,"{\textbf{[B1]}}"] \ar[d,"1_A \otimes \sigma^+_{B,C}"'] \ar[r,"\delta^l_{A,B,C}"] & AB + AC \ar[d,"\sigma^+_{AB,AC}"] \\
      A(C+ B) \ar[r,"\delta^l_{A,C,B}"'] & AC + AB
    \end{tikzcd}

    \begin{tikzcd}[column sep=large]
      (A + B + C)D \ar[rd,phantom,"\textbf{[B3]}"] \ar[r,"\delta^r_{A+ B,C,D}"] \ar[d,"\delta^r_{A,B + C,D}"'] & (A + B)D + CD  \ar[d,"\delta^r_{A,B,D} + 1_{CD}"] \\
      AD + (B + C)D \ar[r,"1_{AD} + \delta^r_{B,C,D}"'] & AD + BD + CD
    \end{tikzcd}

    \begin{tikzcd}
      (A + B)C \ar[rd,phantom,"{\textbf{[B2]}}"] \ar[d,"\sigma^+_{A,B} \otimes 1_C"'] \ar[r,"\delta^r_{A,B,C}"] & AC + BC \ar[d,"\sigma^+_{AC,BC}"] \\
      (B + A)C \ar[r,"\delta^r_{B,A,C}"'] & BC + AC
    \end{tikzcd}

    \begin{tikzcd}[column sep=large]
      A(B + C + D) \ar[d,"\delta^l_{A,B,C + D}"'] \ar[r,"\delta^l_{A,B+ C,D}"] \ar[rd,phantom,"\textbf{[B4]}"] & A(B + C) + AD \ar[d,"\delta^l_{A,B,C} + 1_{AD}"] \\
      AB + A(C + D) \ar[r,"1_{AB} + \delta^l_{A,C,D}"'] & AB + AC + AD
    \end{tikzcd}
    
    \begin{tikzcd}
      AB(C + D) \ar[r,"\delta^l_{AB,C,D}"] \ar[d,"1_A \otimes \delta^l_{B,C,D}"'] \ar[rd,phantom,"\textbf{[B5]}"] & ABC + ABD \ar[d,equal]\\
      A(BC + BD) \ar[r,"\delta^l_{A,BC,BD}"'] & ABC + ABD
    \end{tikzcd}

    \begin{tikzcd}
      (A + B)CD \ar[d,"\delta^r_{A,B,C} \otimes 1_D"'] \ar[r,"\delta^r_{A,B,CD}"] \ar[rd,phantom,"\textbf{[B6]}"] & ACD + BCD \ar[d,equal] \\
      (AC + BC)D \ar[r,"\delta^r_{AC,BC,D}"'] & ACD + BCD
    \end{tikzcd}

    \begin{tikzcd}
      A(B + C)D \ar[r,"\delta^l_{A,B,C} \otimes 1_D"] \ar[d,"1_A \otimes \delta^r_{B,C,D}"'] \ar[rd,phantom,"\textbf{[B7]}"] & (AB + AC)D \ar[d,"\delta^r_{AB,AC,D}"] \\
      A(BD + CD) \ar[r,"\delta^l_{A,BD,CD}"'] & ABD + ACD
    \end{tikzcd}

    \begin{tikzcd}
      & A + B + C + D \ar[rd,"\delta^r_{A,B,C+ D}"] \ar[dl,"\delta^l_{A + B,C,D}"'] \ar[dd,phantom,"\textbf{[B8]}"]  \\
      (A + B)C + (A + B)D \ar[d,"\delta^r_{A,B,C} + \delta^r_{A,B,D}"'] && A(C + D) + B(C + D) \ar[d,"\delta^l_{A,C,D} + \delta^l_{B,C,D}"] \\
      AC + BC + AD + BD \ar[rr,"1_{AC} + \sigma^+_{BC,AD} + 1_{BD}"'] & \text{} & AC + AD + BC + BD
    \end{tikzcd}

    \begin{tikzcd}
      00 \ar[rr,phantom,"\textbf{[B9]}"] \ar[rr,bend left=60,"\lambda^\bullet_0"] \ar[rr,bend right=60,"\rho^\bullet_0"'] && 0
    \end{tikzcd}
    
    \begin{tikzcd}
      0(A + B) \ar[rd,phantom,"\textbf{[B10]}"] \ar[d,"\lambda^\bullet_{A+ B}"']  \ar[r,"\delta^l_{0,A,B}"] & 0A + 0B \ar[d,"\lambda^\bullet_A + \lambda^\bullet_B"] \\
      0 & 0 + 0 \ar[l,equal] 
    \end{tikzcd}

    \begin{tikzcd}
      (A + B)0 \ar[rd,phantom,"\textbf{[B11]}"] \ar[r,"\delta^r_{A,B,0}"] \ar[d,"\rho^\bullet_{A+ B}"'] & A0 + B0 \ar[d,"\rho^\bullet_A + \rho^\bullet_B"] \\
      0 & 0 + 0 \ar[l,equal] 
    \end{tikzcd}

    \begin{tikzcd}
      0I \ar[rr,phantom,"\textbf{[B12]}"] \ar[rr,bend left=60,"\lambda^\bullet_I"] \ar[rr,bend right=60,equal] && 0
    \end{tikzcd}

    \begin{tikzcd}
      I0 \ar[rr,bend left=60,"\rho^\bullet_I"] \ar[rr,bend right=60,equal] \ar[rr,phantom,"\textbf{[B13]}"] && 0 
    \end{tikzcd}
    \\
    \begin{tikzcd}
      AB0 \ar[r,"\rho^\bullet_{AB}"] \ar[d,"1_A \otimes \rho^\bullet_B"'] \ar[rd,phantom,"\textbf{[B14]}"] & 0 \ar[d,equal]\\
      A0 \ar[r,"\rho^\bullet_A"'] & 0
    \end{tikzcd}

    \begin{tikzcd}
      A(0 + B) \ar[rd,phantom,"\textbf{[B17]}"] \ar[d,equal] \ar[r,"\delta^l_{A,0,B}"] & A0 + AB \ar[d,"\rho^\bullet_A + 1_{AB}"] \\
      AB & 0 + AB \ar[l,equal] 
    \end{tikzcd}

    \begin{tikzcd}
      (0 + B)A \ar[d,equal] \ar[rd,phantom,"\textbf{[B18]}"] \ar[r,"\delta^r_{0,B,A}"] & 0A + BA \ar[d,"\lambda^\bullet_A + 1_{BA}"] \\
      BA & 0 + BA \ar[l,equal]
    \end{tikzcd}

    \begin{tikzcd}
      A0B \ar[r,"1_A \otimes \lambda^\bullet_B"] \ar[d,"\rho^\bullet_A \otimes 1_B"'] \ar[rd,phantom,"\textbf{[B15]}"] & A0 \ar[d,"\rho^\bullet_A"] \\
      0B \ar[r,"\lambda^\bullet_B"'] & 0
    \end{tikzcd}
    
    \begin{tikzcd}
      A(B + 0) \ar[rd,phantom,"\textbf{[B19]}"] \ar[r,"\delta^l_{A,B,0}"] \ar[d,equal] & AB + A0 \ar[d,"1_{AB} + \rho^\bullet_A"] \\
      AB & AB + 0 \ar[l,equal] 
    \end{tikzcd}

    \begin{tikzcd}
      (B + 0)A \ar[rd,phantom,"\textbf{[B20]}"] \ar[r,"\delta^r_{B,0,A}"] \ar[d,equal] & BA + 0A \ar[d,"1_{BA} + \lambda^\bullet_{A}"] \\
      BA & BA + 0 \ar[l,equal] 
    \end{tikzcd}

    \begin{tikzcd}
      0AB \ar[r,"\lambda^\bullet_{AB}"] \ar[d,"\lambda^\bullet_A \otimes 1_B"'] \ar[rd,phantom,"\textbf{[B16]}"] & 0 \ar[d,equal] \\
      0B \ar[r,"\lambda^\bullet_B"'] & 0
    \end{tikzcd}
    
    \begin{tikzcd}
      I(A + B) \ar[d,equal] \ar[rd,phantom,"\textbf{[B21]}"] \ar[r,"\delta^l_{I,A,B}"] & IA + IB \ar[d,equal] \\
      A + B \ar[r,equal] & A + B
    \end{tikzcd}
    
    \begin{tikzcd}
      (A + B)I \ar[rd,phantom,"\textbf{[B22]}"] \ar[d,equal] \ar[r,"\delta^r_{A,B,I}"] & AI + BI \ar[d,equal]  \\
      A + B \ar[r,equal] & A + B
    \end{tikzcd}
    \end{mathparpagebreakable}
  \end{itemize}
\end{definition}

\begin{definition}
A distributive monoidal category $(\C,(+,0,\sigma^+),(\otimes,I),(\mu,\eta),(\lambda^\bullet,\rho^\bullet),(\delta^l,\delta^r))$ is said to be \emph{right-strict} in case $\lambda^\bullet$,$\rho^\bullet$, and $\delta^r$ are identity natural transformations. Explicitly, a \emph{right-strict distributive monoidal category} is a tuple
  \[
  (\C,(+,0,\sigma^+),(\otimes,I),(\mu,\eta),\delta^l)
  \]
  such that:
  \begin{itemize}
  \item $(\C,+,0,\sigma^+)$ is a symmetric strict monoidal category.
  \item $(\C,\otimes,I)$ is a strict monoidal category.
  \item $\delta^l$ is a natural isomorphism with components $\delta^l_{A,B,C} : A(B + C) \to AB + AC$. That is, for all $f : A \to A', g : B \to B', h : C \to C'$ of $\C$ we must have:
    \begin{mathpar}
      \begin{tikzcd}
        A(B + C) \ar[r,"\delta^l_{A,B,C}"] \ar[d,"f \otimes (g + h)"'] & AB + AC \ar[d,"(f \otimes g) + (f \otimes h)"] \\ 
        A'(B' + C') \ar[r,"\delta^l_{A',B',C'}"'] & A'B' + A'C'
      \end{tikzcd}
    \end{mathpar}
  \item For all objects $A,B,C$ of $\C$, we have:
    \begin{mathpar}
      A0 = 0
      
      0A = 0
      
      (A + B)C = AC + BC
    \end{mathpar}
    Similarly, for all morphisms $f,g,h$ of $\C$ we have:
    \begin{mathpar}
      f \otimes 1_0 = 1_0

      1_0 \otimes f = 1_0
      
      (f + g) \otimes h = (f \otimes h) + (g \otimes h)
    \end{mathpar}
  \item The following equations are satisfied for all objects $A,B,C,D$ of $\C$:
    \begin{mathpar}
      \delta^l_{0,A,B} = 1_0

      \delta^l_{A,0,B} = 1_{AB}

      \delta^l_{A,B,0} = 1_{AB}

      \delta^l_{I,A,B} = 1_{A + B}

      \delta^l_{A,B,C} \sigma^+_{AB,AC}  = (1_A \otimes \sigma^+_{B,C})\delta^l_{A,C,B}

      \sigma^+_{AC,BC} = \sigma^+_{A,B} \otimes 1_C

      \delta^l_{A,B+ C,D}(\delta^l_{A,B,C} + 1_{AD}) = \delta^l_{A,B,C + D}(1_{AB} + \delta^l_{A,C,D})

      \delta^l_{AB,C,D} = (1_A \otimes \delta^l_{B,C,D})\delta^l_{A,BC,BD}

      \delta^l_{A,BD,CD} = \delta^l_{A,B,C} \otimes 1_D

      \delta^l_{A + B,C,D} = (\delta^l_{A,C,D} + \delta^l_{B,C,D})(1_{AC} + \sigma^+_{AD,BC} + 1_{BD})
    \end{mathpar}
    
  \end{itemize}
\end{definition}

\section{Proof of Theorem~\ref{thm:abacus-elgot}}\label{sec:proof-appendix}
This section contains the proof of Theorem~\ref{thm:abacus-elgot}, restated here as Theorem~\ref{thm:appendix-abacus-elgot}. First, notice that by construction, we have:
\begin{lemma}
  \begin{itemize}
  \item $((\A,+,0,\sigma^+),(\mu,\eta))$ is a cocartesian monoidal category.
  \item $((\A,+,0,\sigma^+),\mathsf{Tr})$ is a uniform traced monoidal category.
  \end{itemize}
\end{lemma}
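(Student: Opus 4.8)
The plan is to observe that the category $\A$ was assembled in Definition~\ref{def:abacus-programs} precisely so that the required axioms hold essentially by fiat, and then to discharge the small amount of bookkeeping needed to lift those axioms from the generating data to arbitrary objects. For the first claim, the symmetric monoidal structure of $(\A,+,0,\sigma^+)$ is guaranteed by \textbf{[S1]}--\textbf{[S7]}: functoriality and associativity/unitality of $+$ come from \textbf{[S1]}--\textbf{[S5]}, while \textbf{[S6]} and \textbf{[S7]} make $\sigma^+$ a self-inverse natural symmetry. Here one must check that the inductively defined $\sigma^+_{P,Q}$ satisfies the hexagon coherence for all polynomials $P,Q$, which I would do by structural induction on $P$ and $Q$; the inductive clauses were chosen exactly so that each case reduces to \textbf{[S6]}, \textbf{[S7]}, and the inductive hypothesis.

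For the cocartesian structure, naturality of $\mu$ and $\eta$ is \textbf{[S11]} and \textbf{[S12]}, the commutative monoid laws are \textbf{[S8]} (associativity), \textbf{[S9]} (commutativity), and \textbf{[S10]} (unitality), and monoidality of $\mu$ and $\eta$ is built into their inductive definitions. The only thing to confirm is that the defining clauses $\mu_{U+P} = (1_U + \sigma^+_{P,U} + 1_P)(\mu_U + \mu_P)$ and $\eta_{U+P} = \eta_U + \eta_P$, which branch on a leading monomial, actually yield the monoidal-naturality identities for arbitrary polynomials in place of the monomial $U$; again this is a routine induction using \textbf{[S5]}--\textbf{[S10]}.

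For the second claim, the trace axioms \textbf{[TR3]}--\textbf{[TR6]} are exactly the imposed equations \textbf{[S13]}--\textbf{[S16]}, while \textbf{[TR1]} and \textbf{[TR2]} are the base case and inductive step of the definition $\mathsf{Tr}^0_{P,Q}(f) = f$, $\mathsf{Tr}^{W+R}_{P,Q}(f) = \mathsf{Tr}^W_{P,Q}(\mathsf{Tr}^R_{P+W,Q+W}(f))$. As before, extending \textbf{[TR1]}--\textbf{[TR6]} from the generating cases to traces over arbitrary polynomial objects proceeds by structural induction on the traced object. Finally, uniformity is imposed directly as part of the defining axioms of $\A$, namely the conditional equation of Definition~\ref{def:uniform-traced-monoidal-category}; as the discussion following Theorem~\ref{thm:abacus-elgot} records, this assumption is unavoidable for the construction to cohere, so it is taken as part of what it means for the arrows of $\A$ to be subject to the axioms of a traced monoidal category.

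The main obstacle --- really the only non-clerical point --- is confirming that the inductively specified structural morphisms $\sigma^+_{P,Q}$, $\mu_P$, $\eta_P$, and $\mathsf{Tr}^R_{P,Q}$ satisfy the structural axioms uniformly in all objects rather than merely on the monomial and base cases against which the equations are stated. I expect each such verification to be a short induction whose inductive step collapses, by design, onto one of the imposed equations together with the inductive hypothesis, so that none presents any genuine difficulty; this is why the claim can be asserted to hold ``by construction.''
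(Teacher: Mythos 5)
Your proposal is correct and takes essentially the same route as the paper, which offers no argument at all: it simply asserts the lemma ``by construction,'' since the equations \textbf{[S1]}--\textbf{[S16]} are imposed in Definition~\ref{def:abacus-programs} and uniformity is (implicitly, as you correctly infer from the discussion after Theorem~\ref{thm:abacus-elgot} and its use in Lemma~\ref{lem:polynomial-interchange}) part of the congruence defining $\A$. The routine inductions you flag --- extending the inductively defined $\sigma^+_{P,Q}$, $\mu_P$, $\eta_P$, and $\mathsf{Tr}^R_{P,Q}$ from their leading-monomial defining clauses to arbitrary polynomial decompositions --- are exactly the bookkeeping the paper silently elides, so your write-up is if anything more explicit than the original.
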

To show that $\A$ is an Elgot category, we must show that our proposed multiplicative monoidal structure satisfies the axioms of a monoidal category, and that it distributes over the existing additive structure. We begin by showing that our $\otimes$ operation forms a monoid on the objects of $\A$:
\begin{lemma}
  For all polynomials $P,Q,R$ and monomials $U$:
  \begin{enumerate}
  \item $P0 = 0$
  \item $U(Q+R) = UQ + UR$
  \item $(P+Q)R = PR + QR$
  \item $P(QR) = (PQ)R$
  \item $IP = P$
  \item $PI = P$
  \end{enumerate}
\end{lemma}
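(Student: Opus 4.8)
The plan is to prove all six items by structural induction, leaning throughout on the free monoid structure underlying the objects of $\A$: the monomials $\{N\}^*$ form a free monoid under concatenation, so that $U(VW) = (UV)W$ and $IU = U = UI$ for monomials $U, V, W$; and the polynomials $(\{N\}^*)^*$ form a free monoid under $+$, so that $+$ is associative with unit $0$. I would also make free use of the two base clauses of the definition of $\otimes$ on objects, namely $U0 = 0$ and $0Q = 0$. The items are not independent, so the order of proof matters: items (2) and (3) are needed for item (4), and the monomial unit laws feed items (5) and (6).

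First I would dispatch items (1), (2), and (3), each by a single induction. Item (1), $P0 = 0$, goes by induction on $P$: the base case is $0 \cdot 0 = 0$, and the step $(U + P')0 = U0 + P'0 = 0 + 0 = 0$ combines the defining equation $(U+P)Q = UQ + PQ$ with the clause $U0 = 0$ and the induction hypothesis. Item (2), $U(Q+R) = UQ + UR$, goes by induction on $Q$, where the step rewrites $U((V+Q')+R) = U(V + (Q'+R)) = UV + U(Q'+R)$ using the defining equation $U(V+Q) = UV + UQ$, applies the induction hypothesis, and reassociates $+$. Item (3), $(P+Q)R = PR + QR$, goes by induction on $P$ in the same style, using the defining equation $(U+P)R = UR + PR$ and associativity of $+$.

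The main obstacle is item (4), associativity $P(QR) = (PQ)R$, which is the only part requiring nested inductions. Because the definition of $\otimes$ singles out its first argument, I would establish associativity at successively more general levels of that argument, building a short ladder in which each rung reduces to the one below at the cost of one application of distributivity. At the bottom, $U(VW) = (UV)W$ for monomials is just the free monoid law on $\{N\}^*$. Next, $U(VR) = (UV)R$ for monomials $U, V$ and a polynomial $R$ follows by induction on $R$, using item (2) and the monomial case. Then $U(QR) = (UQ)R$ for a monomial $U$ and polynomials $Q, R$ follows by induction on $Q$, using items (2) and (3) together with the preceding rung. Finally, $P(QR) = (PQ)R$ for all polynomials follows by induction on $P$, using item (3) and the monomial case just proved.

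Finally, items (5) and (6) are short inductions on $P$. For item (5), $IP = P$, the step $I(V + P') = IV + IP' = V + P'$ uses the defining equation together with the monomial unit law $IU = U$ and the induction hypothesis, with base case $I0 = 0$. For item (6), $PI = P$, the step $(U + P')I = UI + P'I = U + P'$ uses the defining equation $(U+P)Q = UQ + PQ$ with $Q = I$, the monomial unit law $UI = U$, and the induction hypothesis, with base case $0I = 0$. In total, every part is a routine structural induction; essentially all of the genuine work lies in organising item (4) into the ladder above.
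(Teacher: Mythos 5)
Your proposal is correct and takes the same approach as the paper, whose entire proof is the single line ``Straightforward structural induction''; your write-up merely supplies the details, with the ladder of nested inductions for item (4) being the natural way to organise them. Note also that your reading of the base clause of polynomial multiplication as $0Q = 0$ (rather than the paper's evidently typographical ``$0P = P$'') is the one forced by the stated summation formula $P \otimes Q = \Sigma_i (\Sigma_j U_i V_j)$, so your use of it is sound.
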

\begin{proof}
Straightforward structural induction.
\end{proof}

Next, we establish some elementary properties of monomial whiskering:
\begin{lemma}\label{lem:monomial-whiskering-properties}
  For all monomials $U$ and polynomials $Q,R,S$, we have:
  \begin{multicols}{2}
  \begin{enumerate}
  \item $U \ltimes 1_Q = 1_{UQ}$
  \item $U \ltimes \sigma^+_{Q,R} = \sigma^+_{UQ,UR}$
  \item $U \ltimes \eta_Q = \eta_{UQ}$
  \item $U \ltimes \mu_Q = \mu_{UQ}$
  \item $U \ltimes \mathsf{Tr}^Q_{R,S}(f) = \mathsf{Tr}^{UR}_{UP,UQ}(U \ltimes f)$
  \item $1_Q \rtimes U = 1_{QU}$
  \item $\sigma^+_{P,Q} \rtimes U = \sigma^+_{PU,QU}$
  \item $\eta_Q \rtimes U = \eta_{QU}$
  \item $\mu_Q \rtimes U = \mu_{QU}$
  \item $\mathsf{Tr}^R_{P,Q}(f) \rtimes U = \mathsf{Tr}^{RU}_{PU,QU}(f \rtimes U)$
  \end{enumerate}
  \end{multicols}
\end{lemma}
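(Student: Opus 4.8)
The plan is to observe that every clause of the statement merely \emph{lifts} an equation which already holds by definition in the case of monomial subscripts to the case of arbitrary polynomial subscripts. Indeed, the defining clauses of monomial whiskering record exactly the identities (1)--(10) when all subscripts range over monomials; for instance $U \ltimes 1_V = 1_{UV}$, $U \ltimes \sigma^+_{V,W} = \sigma^+_{UV,UW}$, and $U \ltimes \mathsf{Tr}^W_{P,Q}(f) = \mathsf{Tr}^{UW}_{UP,UQ}(U \ltimes f)$ are literally among the clauses defining $(U \ltimes -)$. The content of the lemma is that these identities persist once the polynomial-indexed structural morphisms $1_Q$, $\sigma^+_{P,Q}$, $\eta_Q$, $\mu_Q$, and $\mathsf{Tr}^Q_{R,S}(f)$ are substituted in. The single tool used throughout is that $(U \ltimes -)$ and $(- \rtimes U)$ are the functors fixed in the construction, which by definition commute with $+$ and with composition, i.e. $U \ltimes (f+g) = (U \ltimes f) + (U \ltimes g)$ and dually, so that each structural induction reduces to its base (monomial) clause together with the inductive hypothesis.

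First I would prove (1) by induction on $Q$: when $Q = 0$ both sides are $1_0$ since $U0 = 0$, and when $Q = V + Q'$ I expand $1_{V+Q'} = 1_V + 1_{Q'}$, push $U \ltimes (-)$ through the $+$, and apply the monomial clause $U \ltimes 1_V = 1_{UV}$ together with the inductive hypothesis to obtain $1_{UV} + 1_{UQ'} = 1_{U(V+Q')}$. Identity (2) requires a nested induction matching the two-stage definition of $\sigma^+$: I would first fix a monomial first index and induct on the second index (with base case $\sigma^+_{V,0} = 1_V$ reducing to (1)), and then induct on the polynomial first index, in each step unfolding the definition of $\sigma^+$, distributing $U \ltimes (-)$ over composition and $+$, and invoking the monomial clause and the two inductive hypotheses. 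Identities (3) and (4) then follow by induction on $Q$ from their inductive definitions, where the step case of (4) calls on (2) because $\mu_{V+Q}$ is defined using $\sigma^+$. Identity (5) is an induction on the traced object: the base case $\mathsf{Tr}^0_{R,S}(f) = f$ is immediate, and the step case uses the recursive definition $\mathsf{Tr}^{W+Q}_{R,S}(f) = \mathsf{Tr}^W_{R,S}(\mathsf{Tr}^Q_{R+W,S+W}(f))$ together with the monomial trace clause of the whiskering definition. The right-whiskering identities (6)--(10) are established by exactly the same arguments read on the other side.

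The only real subtlety, and hence the step I would treat most carefully, is (2): because $\sigma^+_{P,Q}$ is built by induction on $P$ \emph{and} on $Q$, one must organise the argument as a genuine double induction and be careful to apply the correct inductive hypothesis in each of the two stages, whereas everything else is bookkeeping. I would also fix the ordering of the clauses so that (2) precedes (4), since the inductive definition of $\mu$ refers to $\sigma^+$; with that dependency respected, each remaining clause is a direct structural induction whose work is entirely routine.
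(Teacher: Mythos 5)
Your proposal matches the paper's proof essentially clause for clause: structural induction on the polynomial subscript for each identity, a nested (double) induction for $\sigma^+$ mirroring its two-stage definition, placing (2) before (4) because $\mu_{V+Q}$ is defined via $\sigma^+$, and induction on the traced polynomial for (5) and (10) using the recursive unfolding $\mathsf{Tr}^{W+Q}_{R,S}(f) = \mathsf{Tr}^W_{R,S}(\mathsf{Tr}^Q_{R+W,S+W}(f))$, all driven by the defining clauses that whiskering commutes with $+$ and composition. The one detail worth stating explicitly, which the paper does and you leave implicit, is that in the trace cases the inductive hypothesis must be quantified over all domain and codomain polynomials, since the recursive clause changes them from $R,S$ to $R+W,S+W$.
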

\begin{proof}
  \begin{enumerate}
  \item By induction on $Q$. The base case is $U \ltimes 1_0 = 1_0 = 1_{U0}$. For the inductive case suppose $U \ltimes 1_Q = 1_{UQ}$. Then we have:
    \begin{align*}
      & U \ltimes 1_{V + Q}
      = U \ltimes (1_V + 1_Q)
      = (U\ltimes 1_V) + (U\ltimes 1_Q)
      = 1_{UV} + 1_{UQ}
      = 1_{U(V+Q)}
    \end{align*}
    and the claim follows.
  \item First, we show that $U \ltimes \sigma^+_{V,R} = \sigma^+_{UV,UR}$ by induction on $R$. The base case is $U \ltimes \sigma^+_{V,0}  = U \ltimes 1_V = 1_{UV} = \sigma^+_{UV,U0}$. For the inductive case, suppose $U \ltimes \sigma^+_{V,R} = \sigma^+_{UV,UR}$. Then we have:
    \begin{align*}
      & U \ltimes \sigma^+_{V,W+R}
      = U \ltimes (\sigma^+_{V,W} + 1_R)(1_W + \sigma^+_{V,R})
      \\&= ((U \ltimes \sigma^+_{V,W}) + 1_{UR})(1_{UW} + (U \ltimes \sigma^+_{V,R}))
      \\&= (\sigma^+_{UV,UW} + 1_{UR})(1_{UW} + \sigma^+_{UV,UR})
      \\&= \sigma^+_{UV,UW+UR} = \sigma^+_{UV,U(W+R)}
    \end{align*}
    and it follows that $U \ltimes \sigma^+_{V,R} = \sigma^+_{UV,UR}$. We proceed to prove the full claim by induction on $Q$. The base case is $U \ltimes \sigma^+_{0,R} = U \ltimes 1_R = 1_{UR} = \sigma^+_{U0,UR}$. For the inductive case, suppose $U \ltimes \sigma^+_{Q,R} = \sigma^+_{UQ,UR}$. Then we have:
    \begin{align*}
      & U \ltimes \sigma^+_{V+Q,R}
      = U \ltimes (1_V + \sigma^+_{Q,R})(\sigma^+_{V,R} + 1_Q)
      \\&= (1_{UV} + (U \ltimes \sigma^+_{Q,R}))((U \ltimes \sigma^+_{V,R}) + 1_{UQ})
      \\&= (1_{UV} + \sigma^+_{UQ,UR})(\sigma^+_{UV,UR} + 1_{UQ})
      \\&= \sigma^+_{UV+UQ,UR} = \sigma^+_{U(V+Q),UR}
    \end{align*}
    and the claim follows.
  \item By induction on $Q$. The base case is $U \ltimes \eta_0 = U \ltimes 1_0 = 1_0 = \eta_0 = \eta_{U0}$. For the inductive case, suppose $U \ltimes \eta_Q = \eta_{UQ}$. Then we have:
    \begin{align*}
      & U \ltimes \eta_{V + Q}
      = U \ltimes (\eta_V + \eta_Q)
      = (U \ltimes \eta_V) + (U \ltimes \eta_Q)
      = \eta_{UV} + \eta_{UQ}
      = \eta_{U(V+Q)}
    \end{align*}
    and the claim follows.
  \item By induction on $Q$. The base case is $U \ltimes \mu_0 = U \ltimes 1_0 = 1_0 = \mu_0 = \mu_{U0}$. For the inductive case, suppose $U \ltimes \mu_Q = \mu_{UQ}$. Then we have:
    \begin{align*}
      & U \ltimes \mu_{V+Q}
      = U \ltimes (1_V + \sigma^+_{Q,V} + 1_Q)(\mu_V + \mu_Q)
      \\&= (1_{UV} + \sigma^+_{UQ,UV} + 1_{UQ})((U \ltimes \mu_V) + (U \ltimes \mu_Q))
      \\&= (1_{UV} + \sigma^+_{UQ,UV} + 1_{UQ})(\mu_{UV} + \mu_{UQ})
      \\&= \mu_{UV + UQ}
      = \mu_{U(V+Q)}
    \end{align*}
    and the claim follows.
      \item The base case is $U \ltimes \mathsf{Tr}^0_{R,S}(f) = U \ltimes f = \mathsf{Tr}^{U0}_{UR,US}(U \ltimes f)$. For the inductive case, suppose $U \ltimes \mathsf{Tr}^Q_{R',S'}(f) = \mathsf{Tr}^{UQ}_{UR',US'}(U \ltimes f)$ for all polynomials $R',S'$. Then we have:
    \begin{align*}
      & U \ltimes \mathsf{Tr}^{V + Q}_{R,S}(f)
      = U \ltimes \mathsf{Tr}^V_{R,S}(\mathsf{Tr}^Q_{R+V,S+V}(f))
      \\&= \mathsf{Tr}^{UV}_{UR,US}(U \ltimes \mathsf{Tr}^Q_{R+V,S+V}(f))
      = \mathsf{Tr}^{UV}_{UR,US}(\mathsf{Tr}^{UQ}_{U(R+V),S(R+V)}(U \ltimes f))
      \\&= \mathsf{Tr}^{UV+UQ}_{UR,US}(U \ltimes f)
      = \mathsf{Tr}^{U(V+Q)}_{UR,US}(U \ltimes f)
    \end{align*}
    and the claim follows.
  \item By induction on $Q$. The base case is $1_0 \rtimes U = 1_0 = 1_{0U}$. For the inductive case, suppose $1_P \rtimes U = 1_{PU}$. Then we have:
    \begin{align*}
    & 1_V \rtimes (U+P)
      = \delta^l_{V,U,P}((1_V \rtimes U) + (1_V \rtimes P))(\delta^l_{V,U,P})^{-1}
      \\&= 1_{VU+VP}(1_{VU} + 1_{VP})1_{VU+VP}
      = 1_{V(U+P)}
    \end{align*}
    and the claim follows.
  \item First, we show $\sigma^+_{V,Q} \rtimes U = \sigma^+_{VU,QU}$ for all monomials $V$, polynomials $Q$. We proceed by induction on $Q$. The base case is $\sigma^+_{V,0} \rtimes U = 1_V \rtimes U = 1_{VU} = \sigma^+_{VU,0U}$. For the inductive case, suppose $\sigma^+_{V,Q} \rtimes U = \sigma^+_{VU,QU}$. Then we have:
    \begin{align*}
      & \sigma^+_{V,W + Q} \rtimes U
      = (\sigma^+_{V,W} + 1_Q)(1_W + \sigma^+_{V,Q}) \rtimes U
      \\&= (\sigma^+_{VU,WU} + 1_{QU})(1_{WU} + \sigma^+_{VU,QU})
      = \sigma^+_{VU,WU + QU}
      = \sigma^+_{VU,(W+Q)U}
    \end{align*}
    and it follows that $\sigma^+_{V,Q} \rtimes U = \sigma^+_{VU,QU}$ for all monomials $V$, polynomials $Q$. We are now ready to prove the claim, which we do by induction on $P$. The base case is $\sigma^+_{0,Q} \rtimes U = 1_Q \rtimes U = 1_{QU} = \sigma^+_{0U,QU}$. For the inductive case, suppse $\sigma^+_{P,Q} \rtimes U = \sigma^+_{PU,QU}$. Then we have:
    \begin{align*}
      & \sigma^+_{V+P,Q} \rtimes U
      = (1_V + \sigma^+_{P,Q})(\sigma^+_{V,Q} + 1_P) \rtimes U
      \\&= (1_{VU} + \sigma^+_{PU,QU})(\sigma^+_{VU,QU} + 1_{PU})
      = \sigma^+_{VU + PU,QU}
      = \sigma^+_{(V+P)U,QU}
    \end{align*}
    and the claim follows.
  \item By induction on $Q$. The base case is $\eta_0 \rtimes U = 1_0 \rtimes U = 1_0 = \eta_0 = \eta_{0U}$. For the inductive case, suppose $\eta_Q \rtimes U = \eta_{QU}$. Then we have:
    \begin{align*}
      & \eta_{V + Q} \rtimes U
      = (\eta_{V} + \eta_{Q})\rtimes U
      = (\eta_{V} \rtimes U) + (\eta_{Q} \rtimes U)
      = \eta_{VU} + \eta_{QU}
      = \eta_{(V+Q)U}
    \end{align*}
    and the claim follows.
  \item By induction on $Q$. The base case is $\mu_0 \rtimes U = 1_0 \rtimes U = 1_0 = \mu_0 = \mu_{0U}$. For the inductive case, suppose $\mu_Q \rtimes U = \mu_{QU}$. Then we have:
    \begin{align*}
      & \mu_{V+Q} \rtimes U
      = (1_V + \sigma^+_{Q,V} + 1_Q)(\mu_V + \mu_Q) \rtimes U
      \\&= (1_{VU} + \sigma^+_{QU,VU} + 1_{QU})(\mu_{VU} + \mu_{QU})
      \\&= \mu_{VU+QU}
      = \mu_{(V+Q)U}
    \end{align*}
    and the claim follows.
      \item By induction on $Q$. The base case is $\mathsf{Tr}^0_{P,Q}(f) \rtimes U = f \rtimes U$. For the inductive case, suppose $\mathsf{Tr}^{R}_{P,Q}(f) \rtimes U = \mathsf{Tr}^{RU}_{PU,QU}(f \rtimes U)$ for all $P,Q$. Then we have:
    \begin{align*}
      & \mathsf{Tr}^{W+R}_{P,Q}(f) \rtimes U
      = \mathsf{Tr}^W_{P,Q}(\mathsf{Tr}^R_{P+W,Q+W}(f)) \rtimes U
      = \mathsf{Tr}^{WU}_{PU,QU}(\mathsf{Tr}^R_{P+W,Q+W}(f) \rtimes U)
      \\&= \mathsf{Tr}^{WU}_{PU,QU}(\mathsf{Tr}^{RU}_{(P+W)U,(Q+W)U}(f \rtimes U))
      = \mathsf{Tr}^{WU}_{PU,QU}(\mathsf{Tr}^{RU}_{PU+WU,QU+WU}(f \rtimes U))
      \\&= \mathsf{Tr}^{WU + RU}_{PU,QU}(f \rtimes U)
      = \mathsf{Tr}^{(W+R)U}_{PU,QU}(f \rtimes U)
    \end{align*}
    and the claim follows.
  \end{enumerate}
\end{proof}

The analogous properties of polynoimial whiskering are:
\begin{lemma}\label{lem:polynomial-whiskering-properties}
  For polynomial whiksering we have:
  \begin{multicols}{2}
  \begin{enumerate}
   \item $P \ltimes 1_Q = 1_{PQ}$

   \item $(P \ltimes \sigma^+_{Q,R})\delta^l_{P,R,Q} = \delta^l_{P,Q,R}\sigma^+_{PQ,PR}$
     
   \item $P \ltimes \eta_Q = \eta_{PQ}$
     
   \item $P \ltimes \mu_Q = \delta^l_{P,Q,Q}\mu_{PQ}$ 

   \item $P \ltimes \mathsf{Tr}^Q_{R,S}(f) \\ = \mathsf{Tr}^{PQ}_{PR,PS}((\delta^l_{P,R,Q})^{-1}(P \ltimes f)\delta^l_{P,S,Q})$
     
   \item $P \ltimes 1_0 = 1_0$

   \item $P \ltimes ts = (P \ltimes t)(P \ltimes s)$
     
   \item $(P \ltimes t + s)\delta^l_{P,Q_1,Q_2} \\ = \delta^l_{P,P_1,P_2}((P \ltimes t) + (P \ltimes s))$
  
   \item $1_Q \rtimes P = 1_{QP}$
     
   \item $\sigma^+_{P,Q} \rtimes R = \sigma^+_{PR,QR}$

   \item $\eta_Q \rtimes P = \eta_{QP}$

   \item $\mu_Q \rtimes P = \mu_{QP}$

   \item $\mathsf{Tr}^R_{P,Q}(f) \rtimes S \\ = \mathsf{Tr}^{RS}_{PS,QS}(f \rtimes S)$
          
   \item $1_0 \rtimes P = 1_0$

   \item $ts \rtimes P = (t \rtimes P)(s \rtimes P)$
     
   \item $t + s \rtimes P \\ = (t \rtimes P) + (s \rtimes P)$
  \end{enumerate}
  \end{multicols}
\end{lemma}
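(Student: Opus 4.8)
The plan is to prove all sixteen identities by structural induction on the polynomial by which one whiskers: on $P$ for the left-whiskering statements (items 1--8), and on the polynomial argument (e.g.\ $P$, $R$, or $S$) for the right-whiskering statements (items 9--16). In each case the base case collapses to $1_0$ --- for left whiskering via $0 \ltimes f = 1_0$ together with $0Q = 0$, and for right whiskering via $f \rtimes 0 = 1_0$ --- and is immediate. The organizing observation is that items 1, 6, and 7 (respectively 9, 14, and 15) together say that $(P \ltimes -)$ (respectively $(- \rtimes P)$) preserves $1_0$, identities, and composition, hence is a functor, while the remaining items record its compatibility with the symmetric, cocartesian, traced, and distributive structure; it is convenient to dispatch these three functoriality items first so that the others may treat the whiskerings as functors.

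For the left-whiskering inductive step one unfolds $(U + P) \ltimes f = (U \ltimes f) + (P \ltimes f)$, applies the matching clause of Lemma~\ref{lem:monomial-whiskering-properties} to the monomial summand and the induction hypothesis to the polynomial summand, and reassembles using \textbf{[S3]} and the recursive definition of whichever structural morphism appears ($\eta$, $\mu$, $\sigma^+$, or $\mathsf{Tr}$). For the four ``decorated'' items whose statements carry distributors --- the symmetry item (2), the multiplication item (4), the trace item (5), and the sum item (8) --- one additionally rearranges the $\delta^l$'s using the recursive clause $\delta^l_{U+P,Q,R} = (1_{UQ+UR} + \delta^l_{P,Q,R})(1_{UQ} + \sigma^+_{UR,PQ} + 1_{PR})$, which is precisely what splices the monomial-level distributor (an identity, since $\delta^l_{U,Q,R} = 1$) to the inductive one, together with the right-strict coherence axioms of Appendix~\ref{sec:distributive-monoidal-categories}.

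For the right-whiskering inductive step one unfolds $f \rtimes (U + P) = \delta^l_{R,U,P}((f \rtimes U) + (f \rtimes P))(\delta^l_{S,U,P})^{-1}$ and must show that the conjugating distributors cancel, leaving the clean, distributor-free right-strict form (as expected, since $\delta^r = 1$). After substituting the monomial clause of Lemma~\ref{lem:monomial-whiskering-properties} and the induction hypothesis, the middle sum becomes a structural morphism through which $\delta^l$ is natural, so the two distributors annihilate one another. The identity items (9, 14) and composition item (15) follow immediately in this way; the symmetry (10), unit (11), multiplication (12), and sum (16) items require in addition the corresponding naturality square or coherence axiom for $\delta^l$ to move the structural morphism past the distributor before cancelling.

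The main obstacle is the distributor bookkeeping in the trace items, namely the left-whiskering trace item (5) and the right-whiskering trace item (13). In (5) one must show that $(\delta^l_{P,R,Q})^{-1}(P \ltimes f)\delta^l_{P,S,Q}$ combines with the inductive trace by pairing the trace axioms \textbf{[S13]}--\textbf{[S15]} with the naturality of $\delta^l$, so that the monomial and inductive traces fuse into a single trace over $PQ = UQ + P'Q$; in (13) one must verify symmetrically that the conjugating distributors can be slid through the trace before they cancel. Every other item is mechanical reassembly once the matching clause of Lemma~\ref{lem:monomial-whiskering-properties} and the induction hypothesis are in hand.
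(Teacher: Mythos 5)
Your proposal matches the paper's proof essentially clause for clause: each item is handled by structural induction on the whiskering polynomial with base case collapsing to $1_0$, the inductive step combining the matching clause of Lemma~\ref{lem:monomial-whiskering-properties} with the induction hypothesis and the recursive clause of $\delta^l$ for the ``decorated'' items, and the only genuinely delicate steps being exactly the two trace items (5) and (13), which the paper also resolves by fusing the sum of traces into a single trace via \textbf{[S13]}--\textbf{[S15]} and symmetries (its marked diagrammatic equations). One small caution: where you attribute the cancellation of the conjugating distributors to ``naturality of $\delta^l$'', note that naturality of $\delta^l$ in $\A$ is only established \emph{after} this lemma, so, as in the paper's computations, you should instead slide the middle structural morphism past the (so far merely defined) morphism $\delta^l$ using naturality of $\mu$, $\eta$, $\sigma^+$ and the trace axioms --- i.e.\ \textbf{[S7]}, \textbf{[S11]}, \textbf{[S12]}, \textbf{[S13]}, \textbf{[S15]} instantiated at $\delta^l$ --- which is available at this stage and suffices for every case you list.
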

\begin{proof}
  \begin{enumerate}
  \item By induction on P. The base case is $0 \ltimes 1_Q = 1_0 = 1_{0Q}$. For the inductive case, suppose $P \ltimes 1_Q = 1_{PQ}$. Then using Lemma~\ref{lem:monomial-whiskering-properties} we have:
    \begin{align*}
      & (U + P) \ltimes 1_Q
      = (U \ltimes 1_Q) + (P \ltimes 1_Q)
      = 1_{UQ} + 1_{PQ} = 1_{(U + P)Q}
    \end{align*}
    and the claim follows.
  \item By induction on $P$. The base case is $(0 \ltimes \sigma^+_{Q,R})\delta^l_{0,R,Q} = 1_0 = \sigma^+_{0,0} = \delta^l_{0,Q,R}\sigma^+_{0Q,0R}$. For the inductive case, suppose $(P \ltimes \sigma^+_{Q,R})\delta^l_{P,R,Q} = \delta^l_{P,Q,R}\sigma^+_{PQ,PR}$. Then using Lemma~\ref{lem:monomial-whiskering-properties}we have:
    \begin{align*}
      & ((U + P) \ltimes \sigma^+_{Q,R})\delta^l_{U + P,R,Q}
      \\&= ((U \ltimes \sigma^+_{Q,R}) + (P \ltimes \sigma^+_{Q,R}))(1_{UR+UQ} + \delta^l_{P,R,Q})(1_{UR} + \sigma^+_{UQ,PR} + 1_{PQ})
      \\&= (\sigma^+_{UQ,UR} + (P \ltimes \sigma^+_{Q,R})\delta^l_{P,R,Q})(1_{UR} + \sigma^+_{UQ,PR} + 1_{PQ})
      \\&= (\sigma^+_{UQ,UR} + \delta^l_{P,Q,R}\sigma^+_{PQ,PR})(1_{UR} + \sigma^+_{UQ,PR} + 1_{PQ})
      \\&= (1_{UQ+UR} + \delta^l_{P,Q,R})(1_{UQ} + \sigma^+_{UR,PQ} + 1_{PR})\\&\hspace{0.5cm}(1_{UQ} + \sigma^+_{PQ,UR} + 1_{PR})(\sigma^+_{UQ,UR} + \sigma^+_{PQ,PR})(1_{UR} + \sigma^+_{UQ,PR} + 1_{PQ})
      \\&= \delta^l_{U+P,Q,R}\sigma^+_{UQ+PQ,UR+PR}
      = \delta^l_{U+P,Q,R}\sigma^+_{(U+P)Q,(U+P)R}
    \end{align*}
    and the claim follows.
  \item By induction on $P$. The base case is $0 \ltimes \eta_Q = 1_0 = \eta_0 = \eta_{0Q}$. For the inductive case, suppose $P \ltimes \eta_Q = \eta_{PQ}$. Then using Lemma~\ref{lem:monomial-whiskering-properties} we have:
    \begin{align*}
      & (U + P) \ltimes \eta_Q
      = (U \ltimes \eta_Q) + (P \ltimes \eta_Q)
      = \eta_{UQ} + \eta_{PQ}
      = \eta_{(U+P)Q}
    \end{align*}
    and the claim follows.
  \item By induction on $P$. The base case is $0 \ltimes \mu_Q = 1_0 = \mu_0 = \mu_{0Q}$. For the inductive case, suppose $P \ltimes \mu_Q = \delta^l_{P,Q,Q}\mu_{PQ}$. Then using Lemma~\ref{lem:monomial-whiskering-properties} we have:
    \begin{align*}
      & (U + P) \ltimes \mu_Q
      = (U \ltimes \mu_Q) + (P \ltimes \mu_Q)
      \\&= \mu_{UQ} + \delta^l_{P,Q,Q}\mu_{PQ}
      = (1_{UQ+UQ} + \delta^l_{P,Q,Q})(\mu_{UQ} + \mu_{PQ})
      \\&= (1_{UQ+UQ} + \delta^l_{P,Q,Q})(1_{UQ} + \sigma^+_{UQ,PQ} + 1_{UQ})(1_{UQ} + \sigma^+_{PQ,UQ} + 1_{PQ})(\mu_{UQ} + \mu_{PQ})
      \\&= \delta^l_{U+P,Q,Q}\mu_{(U+P)Q}
    \end{align*}
    and the claim follows.
  \item By induction on $P$. The base case is $0 \ltimes \mathsf{Tr}^Q_{R,S}(f) = 1_0 = \mathsf{Tr}^0_{0,0}(1_0) = \mathsf{Tr}^{0Q}_{0R,0S}((\delta^l_{0,R,Q})^{-1}(0 \ltimes f)\delta^l_{0,S,Q})$. For the inductive case, suppose $P \ltimes \mathsf{Tr}^Q_{R,S}(f) = \mathsf{Tr}^{PQ}_{PR,PS}((\delta^l_{P,R,Q})^{-1}(P \ltimes f)\delta^l_{P,S,Q})$. Then using Lemma~\ref{lem:monomial-whiskering-properties} we have:
    \begin{align*}
      & (U + P) \ltimes \mathsf{Tr}^{Q}_{R,S}(f)
      = (U \ltimes \mathsf{Tr}^{Q}_{R,S}(f)) + (P \ltimes \mathsf{Tr}^Q_{R,S}(f))
      \\&= \mathsf{Tr}^{UQ}_{UR,US}(U \ltimes f) + \mathsf{Tr}^{PQ}_{PR,PS}((\delta^l_{P,R,Q})^{-1}(P \ltimes f)\delta^l_{P,S,Q})
      \\&\stackrel{*}{=} \mathsf{Tr}^{UQ+PQ}_{UR+PR,US+PS}( (1_{UR} + \sigma^+_{PR,UQ} + 1_{PQ})(1_{UR+UQ} + (\delta^l_{P,R,Q})^{-1})\\&\hspace{0.5cm}((U \ltimes f) + (P \ltimes f))(1_{US+UQ} + \delta^l_{P,S,Q})(1_{US} + \sigma^+_{UQ,PS} + 1_{PQ}))
      \\&= \mathsf{Tr}^{(U+P)Q}_{(U+P)R,(U+P)S}((\delta^l_{U+P,R,Q})^{-1}((U+P) \ltimes f)\delta^l_{U+P,S,Q})
    \end{align*}
    where the marked equation ($\stackrel{*}{=}$) holds as in:
    \[
    \includegraphics[height=5cm,align=c]{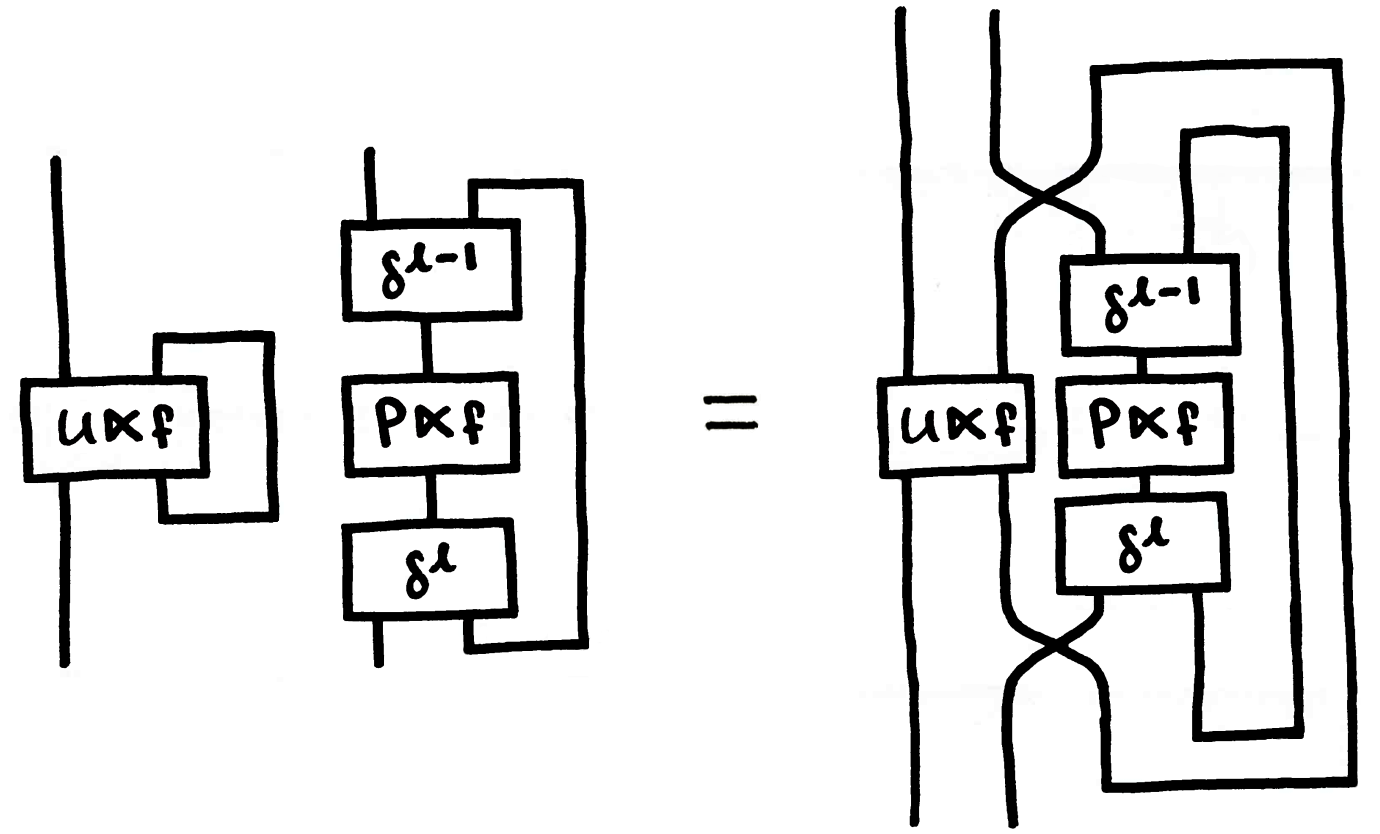}
    \]
    The claim follows.
  \item By induction on $P$. The base case is $0 \ltimes 1_0 = 1_0$. For the inductive case, suppose $P \ltimes 1_0 = 1_0$. Then we have:
    \begin{align*}
      & (U + P) \ltimes 1_0
      = (U \ltimes 1_0) + (P \ltimes 1_0)
      = 1_0 + 1_0 = 1_0
    \end{align*}
    and the claim follows.
  \item By induction on P. The base case is $0 \ltimes ts = 1_0 = (0 \ltimes t)(0 \ltimes s)$. For the inductive case, suppose $P \ltimes ts = (P \ltimes t)(P \ltimes s)$. Then we have:
    \begin{align*}
      & (U + P) \ltimes ts
      = (U \ltimes ts) + (P \ltimes ts)
      = (U \ltimes t)(U \ltimes s) + (P \ltimes t)(P \ltimes s)
      \\&= ((U \ltimes t) + (P \ltimes T))((U \ltimes s) + (P \ltimes S))
      = ((U + P) \ltimes t)((U + P) \ltimes s)
    \end{align*}
    and the claim follows.
  \item By induction on $P$. The base case is $(0 \ltimes (t + s))\delta^l_{0,Q_1,Q_2} = 1_0 = 1_0 + 1_0 = \delta^l_{0,P_1,P_2}((0 \ltimes t) + (0 \ltimes s))$. For the inductive case, suppose $(P \ltimes (t + s))\delta^l_{P,Q_1,Q_2} = \delta^l_{P,P_1,P_2}((P \ltimes t) + (P \ltimes s))$. Then we have:
    \begin{align*}
      & ((U+P) \ltimes (t + s))\delta^l_{U + P,Q_1,Q_2}
      \\&= ((U \ltimes (t + s)) + (P \ltimes (t + s))\delta^l_{P,Q_1,Q_2})(1_{UQ_1} + \sigma^+_{UQ_2,PQ_1} + 1_{PQ_2})
      \\&= ((U \ltimes t) + (U \ltimes s) + \delta^l_{P,P_1,P_2}((P \ltimes t) + (P \ltimes s)))(1_{UQ_1} + \sigma^+_{UQ_2,PQ_1} + 1_{PQ_2})
      \\&= (1_{UP_1 + UP_2} + \delta^l_{P,P_1,P_2})(1_{UP_1} + \sigma^+_{UP_2,PP_1} + 1_{PP_2})((U \ltimes t) + (P \ltimes t) + (U \ltimes s) + (P \ltimes s))
      \\&= \delta^l_{U+P,P_1,P_2}(((U+P) \ltimes t) + ((U+P) \ltimes s))
    \end{align*}
    and the claim follows.
  \item By induction on $P$. The base case is $1_Q \rtimes 0 = 1_0 = 1_{Q0}$. For the inductive case, suppose $1_Q \rtimes P = 1_{QP}$. Then using Lemma~\ref{lem:monomial-whiskering-properties} we have:
    \begin{align*}
      & 1_Q \rtimes (U + P)
      = \delta^l_{Q,U,P}( (1_Q \rtimes U) + (1_Q \rtimes P) )(\delta^l_{Q,U,P})^{-1}
      \\&= \delta^l_{Q,U,P}(1_{QU} + 1_{QP})(\delta^l_{Q,U,P})^{-1}
      = 1_{Q(U+P)}
    \end{align*}
    and the claim follows.
  \item By induction on $R$. The base case is $\sigma^+_{P,Q} \rtimes 0 = 1_0 = \sigma^+_{0,0} = \sigma^+_{P0,Q0}$. For the inductive case, suppose $\sigma^+_{P,Q} \rtimes R = \sigma^+_{PR,QR}$. Then using Lemma~\ref{lem:monomial-whiskering-properties} we have:
    \begin{align*}
      & \sigma^+_{P,Q} \rtimes (W + R)
      = \delta^l_{P+Q,W,R}((\sigma^+_{P,Q} \rtimes W) + (\sigma^+_{P,Q} \rtimes R))(\delta^l_{Q+P,W,R})^{-1}
      \\&= (\delta^l_{P,W,R} + \delta^l_{Q,W,R})(1_{PW} + \sigma^+_{PR,QW} + 1_{QR})(\sigma^+_{PW,QW} + \sigma^+_{PR,QR})\\&\hspace{0.5cm}(1_{QW} + \sigma^+_{PW,QR} + 1_{PW})((\delta^l_{Q,W,R})^{-1} + (\delta^l_{P,W,R})^{-1})
      \\&= (\delta^l_{P,W,R} + \delta^l_{Q,W,R})\sigma^+_{PW+PR,QW+QR}((\delta^l_{Q,W,R})^{-1} + (\delta^l_{P,W,R})^{-1})
      \\&= \sigma^+_{P(W+R),Q(W+R)}
    \end{align*}
    and the claim follows.
  \item By induction on $P$. The base case is $\eta_Q \rtimes 0 = 1_0 = \eta_0 = \eta_{Q0}$. For the inductive case, suppose $\eta_Q \rtimes P = \eta_{QP}$. Then using Lemma~\ref{lem:monomial-whiskering-properties} we have:
    \begin{align*}
      & \eta_Q \rtimes (U+P)
      = \delta^l_{0,U,P}((\eta_Q \rtimes U) + (\eta_Q \rtimes P))(\delta^l_{Q,U,P})^{-1}
      \\&= 1_0(\eta_{QU} + \eta_{QP})(\delta^l_{Q,U,P})^{-1}
      = \eta_{QU + QP}(\delta^l_{Q,U,P})^{-1}
      = \eta_{Q(U+P)}
    \end{align*}
    and the claim follows.
  \item By induction on $P$. The base case is $\mu_Q \rtimes 0 = 1_0 = \mu_0 = \mu_{Q0}$. For the inductive case, suppose $\mu_Q \rtimes P = \mu_{QP}$. Then using Lemma~\ref{lem:monomial-whiskering-properties} we have:
    \begin{align*}
      & \mu_Q \rtimes (U+P)
      = \delta^l_{Q+Q,U,P}((\mu_Q \rtimes U) + (\mu_Q \rtimes P))(\delta^l_{Q,U,P})^{-1}
      \\&= (\delta^l_{Q,U,P} + \delta^l_{Q,U,P})(1_{QU} + \sigma^+_{QP,QU} + 1_{QP})(\mu_{QU} + \mu_{QP})(\delta^l_{Q,U,P})^{-1}
      \\&= (\delta^l_{Q,U,P} + \delta^l_{Q,U,P})\mu_{QU+QP}(\delta^l_{Q,U,P})^{-1}
      = \mu_{Q(U+P)}\delta^l_{Q,U,P}(\delta^l_{Q,U,P})^{-1}
      = \mu_{Q(U+P)}
    \end{align*}
    and the claim follows.
  \item By induction on $P$. The base is $\mathsf{Tr}^Q_{R,S}(f) \rtimes 0 = 1_0 \mathsf{Tr}^0_{0,0}(1_0) = \mathsf{Tr}^{Q0}_{R0,S0}(f \rtimes 0)$. For the inductive case, suppose $\mathsf{Tr}^Q_{R,S}(f) \rtimes P = \mathsf{Tr}^{QP}_{RP,SP}(f \rtimes P)$. Then using Lemma~\ref{lem:monomial-whiskering-properties} we have:
    \begin{align*}
      & \mathsf{Tr}^Q_{R,S}(f) \rtimes (U + P)
      = \delta^l_{R,U,P}((\mathsf{Tr}^Q_{R,S}(f) \rtimes U) + (\mathsf{Tr}^Q_{R,S}(f) \rtimes P))(\delta^l_{S,U,P})^{-1}
      \\&= \delta^l_{R,U,P}(\mathsf{Tr}^{QU}_{RU,SU}(f \rtimes U) + \mathsf{Tr}^{QP}_{RP,SP}(f \rtimes P))(\delta^l_{S,U,P})^{-1}
      \\&\stackrel{*}{=} \mathsf{Tr}^{Q(U+P)}_{R(U+P),S(U+P)}( (\delta^l_{R,U,P} + \delta^l_{Q,U,P})(1_{RU} + \sigma^+_{RP,QU} + 1_{QP})((f \rtimes U) + (f \rtimes P))\\&\hspace{0.5cm}(1_{SU} + \sigma^+_{QU,SP} + 1_{QP})( (\delta^l_{S,U,P})^{-1} + (\delta^l_{Q,U,P})^{-1} ))
      \\&= \mathsf{Tr}^{Q(U+P)}_{R(U+P),S(U+P)}(\delta^l_{R+Q,U,P}((f \rtimes U) + (f \rtimes P))(\delta^l_{S+Q,U,P})^{-1})
      \\&= \mathsf{Tr}^{Q(U+P)}_{R(U+P),S(U+P)}(f \rtimes (U+P))
    \end{align*}
    where the marked equation ($\stackrel{*}{=}$) holds as in:
    \[
    \includegraphics[height=5cm]{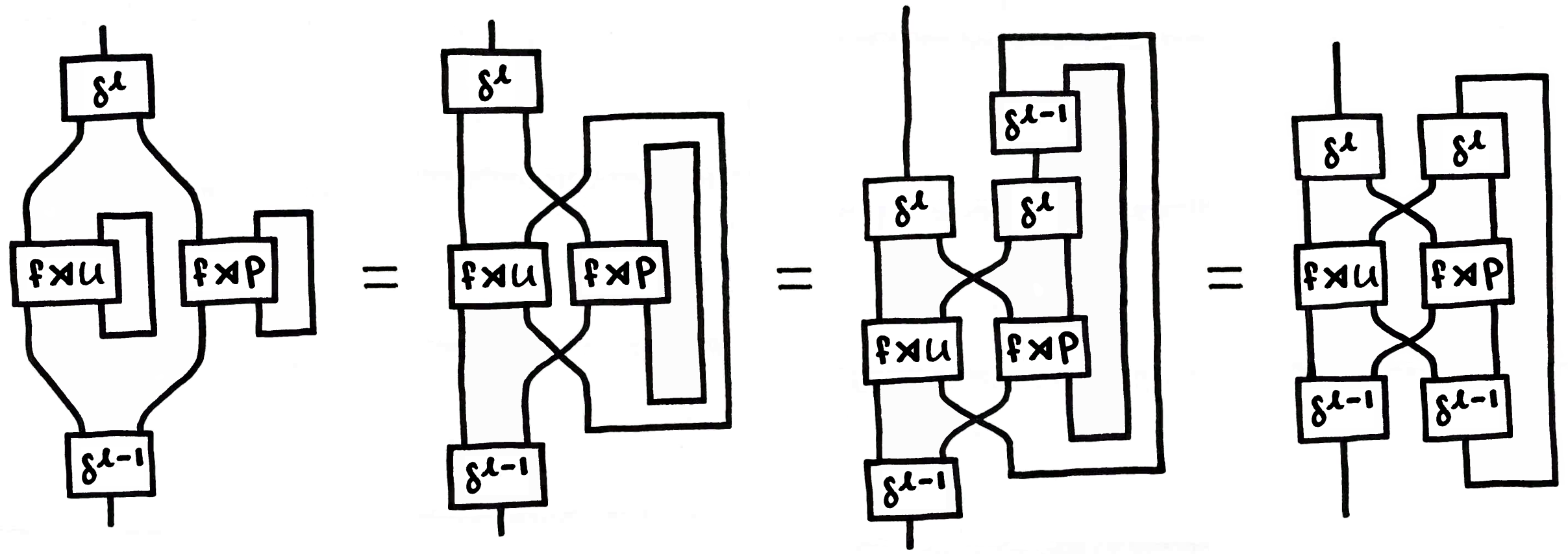}
    \]
    The claim follows. 
  \item By induction on $P$. The base case is $1_0 \rtimes 0 = 1_0$. For the inductive case, suppose $1_0 \rtimes P = 1_0$. Then we have:
    \begin{align*}
      & 1_0 \rtimes (U+P)
      = \delta^l_{0,U,P}((1_0 \rtimes U) + (1_0 \rtimes P))(\delta^l_{0,U,P})^{-1}
      = 1_0(1_0 + 1_0)1_0
      = 1_0
    \end{align*}
    and the claim follows.
      \item By induction on $P$. The base case is $ts \rtimes 0 = 1_0 = 1_01_0 = (t \rtimes 0)(s \rtimes 0)$. For the inductive case, suppose $ts \rtimes P = (t \rtimes P)(s \rtimes P)$. Then we have:
    \begin{align*}
      & ts \rtimes (U+P)
      = \delta^l_{R,U,P}((ts \rtimes U) + (ts \rtimes P))(\delta^l_{T,U,P})^{-1}
      \\&= \delta^l_{R,U,P}((t \rtimes U)(s \rtimes U) + (t \rtimes P)(s \rtimes P))(\delta^l_{T,U,P})^{-1}
      \\&= \delta^l_{R,U,P}((t \rtimes U) + (t \rtimes P))(\delta^l_{S,U,P})^{-1}\delta^l_{S,U,P}((s \rtimes U) + (s \rtimes P))(\delta^l_{T,U,P})^{-1}
      \\&= (t \rtimes (U+P))(s \rtimes (U+P))
    \end{align*}
    where $t : R \to S$ and $s : S \to T$. The claim follows.
  \item By induction on $P$. The base case is $(t + s) \rtimes 0 = 1_0 = 1_0 + 1_0 = (t \rtimes 0) + (s \rtimes 0)$. For the inductive case, suppose $(t + s) \rtimes P = (t \rtimes P) + (s \rtimes P)$. Then we have:
    \begin{align*}
      & (t + s) \rtimes (U + P)
      = \delta^l_{P_1 + P_2,U,P}(((t + s) \rtimes U) + ((t + s) \rtimes P))(\delta^l_{Q_1 + Q_2,U,P})^{-1}
      \\&= (\delta^l_{P_1,U,P} + \delta^l_{P_2,U,P})(1_{P_1U} + \sigma^+_{P_1P,P_2U} + 1_{P_2P})((t \rtimes U) + (s \rtimes Y) + (t \rtimes P) + (s \rtimes P))\\&\hspace{0.5cm}(1_{Q_1U} + \sigma^+_{Q_2U,Q_1P} + 1_{Q_2P})((\delta^l_{Q_1,U,P})^{-1} + (\delta^l_{Q_2,U,P})^{-1})
      \\&= \delta^l_{P_1,U,P}((t \rtimes U) + (t \rtimes P))(\delta^l_{Q_1,U,P})^{-1} + \delta^l_{P_2,U,P}((s \rtimes U) + (s \rtimes P))(\delta^l_{Q_2,U,P})^{-1}
      \\&= (t \rtimes (U+P)) + (s \rtimes (U + P))
    \end{align*}
    and the claim follows.
  \end{enumerate}
\end{proof}

Next, we show that the distributors behave as they ought to:
\begin{lemma}\label{lem:left-distributor-properties}
  We have:
  \begin{enumerate}
  \item $\delta^l_{0,P,Q} = 1_0$
  \item $\delta^l_{P,0,Q} = 1_{PQ}$
  \item $\delta^l_{P,Q,0} = 1_{PQ}$
  \item $\delta^l_{I,P,Q} = 1_{P + Q}$
  \item $\delta^l_{P+Q,R,S} = (\delta^l_{P,R,S} + \delta^l_{Q,R,S})(1_{PR} + \sigma^+_{PS,QR} + 1_{QS})$
  \item $\delta^l_{P,Q+R,S}(\delta^l_{P,Q,R} + 1_{PS}) = \delta^l_{P,Q,R+S}(1_{PQ} + \delta^l_{P,R,S})$
  \item $\delta^l_{PQ,R,S} = (P \ltimes \delta^l_{Q,R,S})\delta^l_{P,QR,QS}$
  \item $\delta^l_{P,QS,RS} = \delta^l_{P,Q,R} \rtimes S$
  \item $\delta^l_{P,Q,R}((P \ltimes f) + (P \ltimes g)) = (P \ltimes f + g)\delta^l_{P,Q',R'}$ for $f : Q \to Q', g : R \to R'$
  \end{enumerate}
\end{lemma}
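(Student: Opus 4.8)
The plan is to prove each of the nine statements by structural induction on the polynomial $P$ that occurs as the first argument of $\delta^l$, following the inductive definition $\delta^l_{0,Q,R} = 1_0$ and $\delta^l_{U+P,Q,R} = (1_{UQ+UR} + \delta^l_{P,Q,R})(1_{UQ} + \sigma^+_{UR,PQ} + 1_{PR})$. Two observations make the base and monomial cases uniform: $\delta^l_{0,Q,R} = 1_0$ by definition, and for any monomial $U$ one has $\delta^l_{U,Q,R} = 1_{UQ+UR}$ (the instance $P = U + 0$). Statement (1) is then literally the base case, and statement (4) is the monomial observation at $U = I$ together with $IX = X$.

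Statements (2) and (3) are short inductions on $P$: in the inductive step for $P = U + P'$ one unfolds the definition and simplifies the symmetry factors at $0$ using $\sigma^+_{X,0} = 1_X$, $\sigma^+_{0,X} = 1_X$, and the absorption $U0 = 0 = P'0$, after which the composite collapses to the claimed identity. These use only the additive strictness equations and the inductive hypothesis.

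Statements (5) and (6) are the heart of the lemma, encoding respectively additivity of $\delta^l$ in its first argument and the associativity (pentagon) coherence of the distributor. Both go by induction on $P$. For (5) the base case $P = 0$ reduces the right-hand side to $\delta^l_{Q,R,S}$ using (1), $0X = 0$, and $\sigma^+_{0,X} = 1_X$; the inductive step for $P = U + P'$ rewrites $\delta^l_{(U+P')+Q,R,S} = \delta^l_{U+(P'+Q),R,S}$ via the definition, applies the inductive hypothesis to $\delta^l_{P'+Q,R,S}$, and must then transform the single braiding $\sigma^+_{US,(P'+Q)R}$ produced there into the two-block shape $1_{P'R} + \sigma^+_{P'S,QR} + 1_{QS}$ demanded by the right-hand side. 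This is carried out by decomposing $\sigma^+$ over the sum $(P'+Q)R = P'R + QR$ and applying its naturality \textbf{[S7]}, and it is where essentially all of the tedium lives. Statement (6) is an analogous induction whose monomial case makes both sides equal to $1_{PQ+PR+PS}$ by the monomial observation, and whose inductive step unfolds the definition and applies the inductive hypothesis.

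Statements (7) and (8) connect $\delta^l$ to the whiskering operations; each is again an induction on $P$ whose inductive step invokes the whiskering identities of Lemma~\ref{lem:monomial-whiskering-properties} and Lemma~\ref{lem:polynomial-whiskering-properties} --- notably $1_Q \rtimes P = 1_{QP}$, the functoriality of $-\rtimes S$ on composites and sums, and the action of $\rtimes$ on symmetries --- to match the two unfolded sides, using $U(QS) = (UQ)S$ to reconcile the indices. Statement (9) needs no separate argument: it is exactly the naturality identity already recorded as Lemma~\ref{lem:polynomial-whiskering-properties}(8) with $t = f$ and $s = g$. The main obstacle is the inductive step of (5): tracking the braidings as the monomial block $U$ is commuted past the block corresponding to $P' + Q$ is delicate, and it is cleanest to verify the required rearrangement string-diagrammatically rather than through a long equational chain. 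Once (5) is in hand, (6)--(8) follow by routine induction and (9) is immediate.
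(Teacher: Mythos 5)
Your proposal follows the paper's proof almost item for item: each statement is proved by structural induction on the first argument of $\delta^l$, statements (1) and (4) fall out of the base clause and the monomial observation $\delta^l_{U,Q,R} = 1_{UQ+UR}$, statements (2) and (3) are the same short inductions collapsing the symmetry factors at $0$, and your treatment of (5) --- reassociating $\delta^l_{(U+P')+Q,R,S}$ as $\delta^l_{U+(P'+Q),R,S}$, applying the inductive hypothesis, and then redistributing the braiding $\sigma^+_{US,P'R+QR}$ via its decomposition over sums and naturality \textbf{[S7]} --- is exactly the computation in the paper's inductive step (whether one verifies the rearrangement diagrammatically or equationally is immaterial). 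Two points of difference are worth recording. First, on (9) you genuinely streamline the paper: it is indeed literally Lemma~\ref{lem:polynomial-whiskering-properties}(8) with $t = f$ and $s = g$ (so $P_1 = Q$, $P_2 = R$, $Q_1 = Q'$, $Q_2 = R'$), and the citation is legitimate since that lemma precedes this one and its proof makes no use of the present lemma; the paper instead re-proves the identity by a fresh induction on $P$, which your route shows to be redundant. Second, on (7) your sketch is slightly too thin: a single induction on $P$ does not suffice as stated, because after writing $(U+P')Q = UQ + P'Q$ the first index $UQ$ is a polynomial rather than a monomial, so the definitional unfolding of $\delta^l$ no longer applies; one needs the auxiliary identity $\delta^l_{UQ,R,S} = U \ltimes \delta^l_{Q,R,S}$, which is not among the whiskering identities you cite and requires its own nested induction on $Q$, after which item (5) splits $\delta^l_{UQ+P'Q,R,S}$ and the inductive hypothesis finishes --- this is precisely how the paper structures its proof of (7). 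That omission is repairable by routine means and does not threaten the overall strategy, which is otherwise the same as the paper's.
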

\begin{proof}
  \begin{enumerate}
  \item Immediate.
  \item By induction on $P$. The base case is $\delta^l_{0,0,Q} = 1_0 = 1_{0Q}$. For the inductive case, suppose $\delta^l_{P,0,Q} = 1_{PQ}$. Then we have:
    \begin{align*}
      &\delta^l_{U + P,0,Q}
      = (1_{U0 + UQ} + \delta^l_{P,0,Q})(1_{U0} + \sigma^+_{UQ,P0} + 1_{PQ}) 
      \\&= (1_{UQ} + 1_{PQ})(1_{UQ} + 1_{PQ})
      = 1_{UQ + PQ}
      = 1_{(U + P)Q}
    \end{align*}
    and the claim follows.
  \item By induction on $P$. The base case is $\delta^l_{0,Q,0} = 1_0 = 1_{0Q}$. For the inductive case, suppose $\delta^l_{P,Q,0} = 1_{PQ}$. Then we have:
    \begin{align*}
      & \delta^l_{U+P,Q,0}
      = (1_{UQ + U0} + \delta^l_{P,Q,0})(1_{UQ} + \sigma^+_{U0,PQ} + 1_{P0})
      \\ &= (1_{UQ} + 1_{PQ})(1_{UQ} + 1_{PQ})
      = 1_{UQ + PQ}
      = 1_{(U+P)Q}
    \end{align*}
    and the claim follows.
  \item $I$ is a monomial so we have $\delta^l_{I,P,Q} = 1_{IP + IQ} = 1_{P + Q}$ as required.
  \item By induction on $P$. The base cases is $\delta^l_{0+Q,R,S} = \delta^l_{Q,R,S} = (1_0 + \delta^l_{Q,R,S})(1_0 + \sigma^+_{0,QR} + 1_{QS}) = (\delta^l_{0,R,S} + \delta^l_{Q,R,S})(1_{0R} + \sigma^+_{0S,QR} + 1_{QS})$. For the inductive case, suppose $\delta^l_{P+Q,R,S} = (\delta^l_{P,R,S} + \delta^l_{Q,R,S})(1_{PR} + \sigma^+_{PS,QR} + 1_{QS})$. Then we have:
    \begin{align*}
      & \delta^l_{(U+P)+Q,R,S}
      = \delta^l_{U+(P+Q),R,S}
      = (1_{UR+US} + \delta^l_{P+Q,R,S})(1_{UR} + \sigma^+_{US,PR+QR} + 1_{PS+QS})
      \\&= (1_{UR+US} + (\delta^l_{P,R,S} + \delta^l_{Q,R,S})(1_{PR} + \sigma^+_{PS,QR} + 1_{QS}))(1_{UR} + \sigma^+_{US,PR+QR} + 1_{PS+QS})
      \\&= (1_{UR+US} + \delta^l_{P,R,S} + \delta^l_{Q,R,S})(1_{UR} + \sigma^+_{US,PR} + \sigma^+_{PS,QR} + 1_{QS})(1_{UR+PR} + \sigma^+_{US,QR} + 1_{PS+QS})
      \\&= ((1_{UR+US} + \delta^l_{P,R,S})(1_{UR} + \sigma^+_{US,PR} + 1_{PS}) + \delta^l_{Q,R,S})(1_{UR+PR} + \sigma^+_{US+PS,QR} + 1_{QS})
      \\&= (\delta^l_{U+P,R,S} + \delta^l_{Q,R,S})(1_{(U+P)R}+ \sigma^+_{(U+P)S,QR} + 1_{QS})
    \end{align*}
    and the claim follows.
  \item By induction on $P$. The base case is $\delta^l_{0,Q+R,S}(\delta^l_{0,Q,R} + 1_{0S}) = 1_0 = \delta^l_{0,Q,R+S}(1_{0Q} + \delta^l_{0,R,S})$. For the inductive case, suppose $\delta^l_{P,Q+R,S}(\delta^l_{P,Q,R} + 1_{PS}) = \delta^l_{P,Q,R+S}(1_{PQ} + \delta^l_{P,R,S})$. Then we have:
    \begin{align*}
      & \delta^l_{U+P,Q+R,S}(\delta^l_{U+P,Q,R} + 1_{U+P}S)
      \\&= (1_{UQ+UR+US} + \delta^l_{P,Q+R,S})(1_{UQ+UR} + \sigma^+_{US,P(Q+R)} + 1_{PS})
      \\&\hspace{0.5cm}(1_{UQ+UR} + \delta^l_{P,Q,R} + 1_{US+PS})(1_{UQ} + \sigma^+_{UR,PQ} + 1_{PR+US+PS})
      \\&= (1_{UQ+UR+US}+\delta^l_{P,Q+R,S})(1_{UQ+UR+US}+\delta^l_{P,Q,R}+1_{PS})
      \\&\hspace{0.5cm}(1_{UQ+UR} + \sigma^+_{US,PQ+PR} + 1_{PS})(1_{UQ} + \sigma^+_{UR,PQ} + 1_{PR+US+PS})
      \\&= (1_{UQ+UR+US} + \delta^l_{P,Q+R,S}(\delta^l_{P,Q,R} + 1_{PS}))(1_{UQ+UR} + \sigma^+_{US,PQ} + 1_{PR,PS})
      \\&\hspace{0.5cm}(1_{UQ} + \sigma^+_{UR,PQ} + \sigma^+_{US,PR} + 1_{PS})
      \\&= (1_{UQ+UR+US} + \delta^l_{P,Q,R+S}(1_{PQ} + \delta^l_{P,R,S}))(1_{UQ+UR} + \sigma^+_{US,PQ} + 1_{PR,PS})
      \\&\hspace{0.5cm}(1_{UQ} + \sigma^+_{UR,PQ} + \sigma^+_{US,PR} + 1_{PS})
      \\&= (1_{UQ+UR+US} + \delta^l_{P,Q,R+S})(1_{UQ} + \sigma^+_{UR+US,PQ} + 1_{P(R+S)})
      \\&\hspace{0.5cm}(1_{UQ+PQ+UR+US} + \delta^l_{P,R,S})(1_{UQ+PQ+UR} + \sigma^+_{US,PR} + 1_{PS})
      \\&= \delta^l_{U+P,Q,R+S}(1_{(U+P)Q} + \delta^l_{U+P,R,S})
    \end{align*}
  \item First, we show that for all monomials $U$ we have $\delta^l_{UQ,R,S} = U \ltimes \delta^l_{Q,R,S}$. We do this by induction on $Q$. For the base case we have $\delta^l_{U0,R,S} = 1_0 = U \ltimes 1_0 = U \ltimes \delta^l_{0,R,S}$. For the inductive case, suppose $\delta^l_{UQ,R,S} = U \ltimes \delta^l_{Q,R,S}$. Then we have:
    \begin{align*}
      & \delta^l_{U(V+Q),R,S}
      = \delta^l_{UV+UQ,R,S}
      = (1_{UVR + UVS} + \delta^l_{UQ,R,S})(1_{UVR} + \sigma^+_{UVS,UQR} + 1_{UQS})
      \\&= ((U \ltimes 1_{VR+VS}) + (U \ltimes \delta^l_{Q,R,S}))(U \ltimes (1_{VR} + \sigma^+_{VS,QR} + 1_{RS}))
      = U \ltimes \delta^l_{V+Q,R,S}
    \end{align*}
    as required. We proceed to prove the main claim by induction on $P$. The base case is $\delta^l_{0Q,R,S} = 1_0 = (0 \ltimes \delta^l_{Q,R,S})\delta^l_{0,QR,QS}$. For the inductive case, suppose $\delta^l_{PQ,R,S} = (P \ltimes \delta^l_{Q,R,S})\delta^l_{P,QR,QS}$. Then we have:
    \begin{align*}
      & ((U+P) \ltimes \delta^l_{Q,R,S})\delta^l_{U+P,QR,QS}
      \\&= ((U \ltimes \delta^l_{Q,R,S}) + (P \ltimes \delta^l_{Q,R,S}))(1_{UQR+UQS} + \delta^l_{P,QR,QS})(1_{UQR} + \sigma^+_{UQS,PQR} + 1_{PQS})
      \\&= (\delta^l_{UQ,R,S} + \delta^l_{PQ,R,S})(1_{UQR} + \sigma^+_{UQS,PQR} + 1_{PQS})
      = \delta^l_{UQ+PQ,R,S} = \delta^l_{(U+P)Q,R,S}
    \end{align*}
    and the claim follows.
  \item By induction on $P$. The base case is $\delta^l_{0,QS,RS} = 1_0 = 1_0\rtimes S = \delta^l_{0,P,Q} \rtimes S$ For the inductive case, suppose $\delta_{P,Q,R} \rtimes S = \delta_{P,QS,RS}$. Then we have:
    \begin{align*}
      & \delta^l_{U+P,QS,RS}
      = (1_{UQS + URS} + \delta^l_{P,QS,RS})(1_{UQS} + \sigma^+_{URS,PQS} + 1_{PRS})
      \\&= (1_{UQS+URS} + (\delta^l_{P,Q,R} \rtimes S))(1_{UQS} + (\sigma^+_{UR,PQ} \rtimes S) + 1_{PRS})
      \\&= (1_{UQ+UR} + \delta^l_{P,Q,R})(1_{UQ} + \sigma^+_{UR,PQ} + 1_{PR}) \rtimes S
      = \delta^l_{U+P,Q,R} \rtimes S
    \end{align*}
    and the claim follows.
  \item By induction on $P$. For the base case we have $\delta^l_{0,Q,R}((0 \ltimes f) + (0 \ltimes g)) = 1_0 = (0 \ltimes f + g)\delta^l_{P,Q',R'}$. For the inductive case, suppose $\delta^l_{P,Q,R}((P \ltimes f) + (P \ltimes g)) = (P \ltimes f + g)\delta^l_{P,Q',R'}$. Then we have:
    \begin{align*}
      & \delta^l_{U+P,Q,R}( ((U+P) \ltimes f) + ((U + P) \ltimes g) )
      \\&= (\delta^l_{U,Q,R} + \delta^l_{P,Q,R})(1_{UQ} + \sigma^+_{UR,PQ} + 1_{PR})((U \ltimes f) + (P \ltimes f) + (U \ltimes g) + (P \ltimes g))
      \\&= (\delta^l_{U,Q,R} + \delta^l_{P,Q,R})((U \ltimes f) + (U \ltimes g) + (P \ltimes f) + (P \ltimes g))(1_{UQ'} + \sigma^+_{UR',PQ'} + 1_{PR'})
      \\&= (\delta^l_{U,Q,R}((U \ltimes f) + (U \ltimes g)) + \delta^l_{P,Q,R}((P \ltimes f) + (P \ltimes g)))(1_{UQ'} + \sigma^+_{UR',PQ'} + 1_{PR'})
      \\&= ((U \ltimes f + g)\delta^l_{U,Q',R'} + (P \ltimes f + g)\delta^l_{P,Q',R'})(1_{UQ'} + \sigma^+_{UR',PQ'} + 1_{PR'})
      \\&= ((U \ltimes f + g) + (P \ltimes f + g))\delta^l_{U+P,Q,R}
      = ((U + P) \ltimes f + g)\delta^l_{U+P,Q,R}
    \end{align*}
    and the claim follows.
  \end{enumerate}
\end{proof}

We establish a few more properties of the whiskering functors:
\begin{lemma}
  For any morphism $f : R \to S$ of $\A$ and any monomials $U,V$ we have:
  \begin{enumerate}
  \item $UV \ltimes f = U \ltimes (V \ltimes f)$
  \item $f \rtimes UV = (f \rtimes U) \rtimes V$
  \item $(U \ltimes f) \rtimes V = U \ltimes (f \rtimes V)$
  \end{enumerate}
\end{lemma}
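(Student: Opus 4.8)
The plan is to prove each of the three identities by structural induction on $f$, following the inductive generation of the morphisms of $\A$. The three statements are established in exactly the same fashion, so I would carry out the argument in full for the first and then observe that the remaining two go through mutatis mutandis. In each case the recursion is driven entirely by the defining clauses of monomial whiskering, so that no appeal to the distributors is required and the induction never leaves the generating morphisms; the only external fact needed is that $\otimes$ is associative and unital on objects, which was recorded earlier.

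For the first identity $UV \ltimes f = U \ltimes (V \ltimes f)$ the base cases are the generators $\s$, $\z$, $\p$, $1_A$, $\eta_A$, $\mu_A$, $\sigma^+$, and $1_0$, and each is immediate from the definitions. For example $UV \ltimes \s_{A,B} = \s_{UVA,B}$, while $U \ltimes (V \ltimes \s_{A,B}) = U \ltimes \s_{VA,B} = \s_{U(VA),B}$, and these coincide because concatenation of monomials is associative; the other generators behave identically. For the inductive step, the composition clause $U \ltimes fg = (U \ltimes f)(U \ltimes g)$ lets both $UV \ltimes fg$ and $U \ltimes (V \ltimes fg)$ unfold to composites of whiskered factors that agree by the inductive hypothesis applied to $f$ and $g$, and the case of $f + g$ is identical with $+$ in place of composition.

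The hard part will be the trace case, although ``hard'' overstates it. Applying the clause $U \ltimes \mathsf{Tr}^W_{P,Q}(f) = \mathsf{Tr}^{UW}_{UP,UQ}(U \ltimes f)$ twice yields $U \ltimes (V \ltimes \mathsf{Tr}^W_{P,Q}(f)) = \mathsf{Tr}^{U(VW)}_{U(VP),U(VQ)}(U \ltimes (V \ltimes f))$, while a single application with whiskering monomial $UV$ yields $UV \ltimes \mathsf{Tr}^W_{P,Q}(f) = \mathsf{Tr}^{(UV)W}_{(UV)P,(UV)Q}(UV \ltimes f)$. These two traces have the same type precisely because $(UV)W = U(VW)$, and likewise for $P$ and $Q$, by associativity of $\otimes$ on objects, and their arguments agree by the inductive hypothesis. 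The obstacle is thus purely the bookkeeping of sub- and superscripts rather than any genuine difficulty.

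Finally, for the second identity $f \rtimes UV = (f \rtimes U) \rtimes V$ the same induction applies verbatim with the clauses for $\rtimes$ replacing those for $\ltimes$, and for the third identity $(U \ltimes f) \rtimes V = U \ltimes (f \rtimes V)$ the induction again runs over $f$: on each generator both sides reduce to the same one (for instance $(U \ltimes \s_{A,B}) \rtimes V = \s_{UA,BV} = U \ltimes (\s_{A,B} \rtimes V)$), the composition and sum cases follow from the relevant distributivity clauses together with the inductive hypothesis, and the trace case once more comes down to associativity of $\otimes$ on objects. Since none of the three inductions relies on the others, they may be performed independently.
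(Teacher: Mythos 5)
Your proposal is correct and coincides with the paper's approach: the paper disposes of this lemma with the single line ``Straightforward, by induction on $f$,'' and your structural induction on $f$ --- base cases on the generators reducing to strict associativity of monomial concatenation, with the composition, sum, and trace clauses handling the inductive steps --- is exactly the argument being elided, including the correct observation that the monomial whiskering clauses never invoke the distributors $\delta^l$.
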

\begin{proof}
  Straightforward, by induction on $f$. 
\end{proof}

\begin{lemma}\label{lem:polynomial-coherence}
  For any morphism $f : R \to S$ of $\A$ and any polynomials $P,Q$, we have:
  \begin{enumerate}
  \item $I \ltimes f = f$
  \item $f \rtimes I = f$
  \item $PQ \ltimes f = P \ltimes (Q \ltimes f)$
  \item $f \rtimes PQ = (f \rtimes P) \rtimes Q$
  \item $(P \ltimes f) \rtimes Q = P \ltimes (f \rtimes Q)$
  \end{enumerate}
\end{lemma}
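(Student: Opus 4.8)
The plan is to prove all five statements by structural induction on the morphism $f : R \to S$, handling the generating morphisms and the closure operations ($+$, composition, and $\mathsf{Tr}$) in turn. This is the standard shape for all the whiskering lemmas in this section, and the three statements proved in the immediately preceding (unnumbered) lemma — namely $UV \ltimes f = U \ltimes (V \ltimes f)$, $f \rtimes UV = (f \rtimes U) \rtimes V$, and $(U \ltimes f) \rtimes V = U \ltimes (f \rtimes V)$ for \emph{monomials} $U,V$ — are exactly the monomial-level facts I expect to bootstrap from. The key observation is that polynomial whiskering $(P \ltimes -)$ and $(- \rtimes P)$ are \emph{defined} by induction on the polynomial $P$ in terms of monomial whiskering, so each of the five polynomial-level identities should reduce, via that definition, to the corresponding monomial-level identity together with an inductive hypothesis.

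For parts (1) and (2), since $I$ is a monomial, I would simply unfold the definitions: $I \ltimes f$ and $f \rtimes I$ are the monomial whiskerings by $I$, and I would prove $I \ltimes f = f$ and $f \rtimes I = f$ by a direct structural induction on $f$ using the clauses defining monomial whiskering (e.g.\ $I \ltimes \s_{V,W} = \s_{IV,W} = \s_{V,W}$ since $IV = V$, and similarly for the other generators, with the compositional clauses following immediately from the inductive hypothesis). For parts (3), (4), (5), I would induct on the \emph{first} polynomial argument that drives the relevant definition. For (3), inducting on $P$: the base case $P = 0$ gives $0 \ltimes f = 1_0 = 0 \ltimes (Q \ltimes f)$, and the inductive step $P = U + P'$ expands $(U+P') \ltimes (Q \ltimes f) = (U \ltimes (Q \ltimes f)) + (P' \ltimes (Q \ltimes f))$, where I rewrite $U \ltimes (Q \ltimes f)$ using a sub-induction on $Q$ (reducing to the monomial fact $UV \ltimes f = U \ltimes (V \ltimes f)$) and apply the outer inductive hypothesis to the $P'$ summand. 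Parts (4) and (5) go the same way, inducting on whichever polynomial sits outermost in the defining recursion, with the $\delta^l$ conjugation in the definition of right polynomial whiskering managed by the naturality and coherence identities for $\delta^l$ collected in Lemma~\ref{lem:left-distributor-properties} (particularly the interaction laws $\delta^l_{PQ,R,S} = (P \ltimes \delta^l_{Q,R,S})\delta^l_{P,QR,QS}$ and $\delta^l_{P,QS,RS} = \delta^l_{P,Q,R} \rtimes S$).

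\textbf{The main obstacle} I anticipate is part (5), $(P \ltimes f) \rtimes Q = P \ltimes (f \rtimes Q)$, because both outer operations are nontrivial simultaneously: the left side first forms a coproduct of monomial left-whiskerings (one per summand of $P$) and then conjugates by distributors indexed by $Q$, while the right side conjugates \emph{inside} each summand before taking the coproduct. Reconciling the two requires commuting a $\delta^l$ conjugation past a coproduct of morphisms, which is precisely the content of the distributor coherence laws in Lemma~\ref{lem:left-distributor-properties}, especially part (5) there ($\delta^l_{P+Q,R,S}$ splitting as a sum of distributors composed with a symmetry reshuffle) and the interchange law relating $\delta^l$ to whiskering of a sum. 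I expect the bookkeeping of the symmetry term $1 + \sigma^+ + 1$ to be the fiddliest point, but no genuinely new idea beyond those already deployed in Lemma~\ref{lem:polynomial-whiskering-properties} and Lemma~\ref{lem:left-distributor-properties}; the computation is, as the paper says of the larger proof, long, tedious, and straightforward.
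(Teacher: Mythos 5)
Your proposal is correct and matches the paper's proof in essentially every respect: parts (1)--(2) are proved by structural induction on $f$ using that $I$ is a monomial, and for each of (3)--(5) the paper does exactly your two-stage induction, first establishing the monomial instance as an auxiliary claim (your sub-induction on the second argument) and then the outer polynomial induction, with the $\delta^l$ conjugations handled by the coherence laws of Lemma~\ref{lem:left-distributor-properties} (in particular the factorization $\delta^l_{PQ,R,S} = (P \ltimes \delta^l_{Q,R,S})\delta^l_{P,QR,QS}$ and the interchange law $\delta^l_{P,Q,R}((P \ltimes f) + (P \ltimes g)) = (P \ltimes (f+g))\delta^l_{P,Q',R'}$ that you cite). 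The only cosmetic deviation is your additional mention of the sum-decomposition of $\delta^l_{P+Q,R,S}$ for part (5), which the paper's computation does not need, but this changes nothing.
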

\begin{proof}
  \begin{enumerate}
  \item By induction on $f$. We use in particular that $I$ is monomial. The base cases are as follows:
    \begin{itemize}
    \item ($\s$): If $f$ is $\s_{U,V}$ then we have $I \ltimes \s_{U,V} = \s_{IU,V} = \s_{U,V}$.

    \item ($\z$): If $f$ is $\z_{U,V}$ then we have $I \ltimes \z_{U,V} = \z_{IU,V} = \z_{U,V}$.

    \item ($\p$): If $f$ is $\p_{U,V}$ then we have $I \ltimes \p_{U,V} = \p_{IU,V} = \p_{U,V}$.

    \item ($1$): If $f$ is $1_U$ then we have $I \ltimes 1_U = 1_{IU} = 1_{U}$.

    \item ($\eta$): If $f$ is $\eta_U$ then we have $I \ltimes \eta_U = \eta_{IU} = \eta_{U}$.
      
    \item ($\mu$): If $f$ is $\mu_U$ then we have $I \ltimes \mu_U = \mu_{IU} = \mu_U$.

    \item ($0$): If $f$ is $1_0$ then we have $I \ltimes 1_0 = 1_0$.

    \item ($\sigma$): If $f$ is $\sigma^+_{U,V}$ then we have $I \ltimes \sigma^+_{U,V} = \sigma^+_{IU,IV} = \sigma^+_{U,V}$. 
    \end{itemize}
    For the inductive cases, suppose $I \ltimes f = f$ and $I \ltimes g = g$. Then we have:
   \begin{itemize}
     \item ($\circ$): $I \ltimes fg = (I \ltimes f)(I \ltimes g) = fg$

     \item ($+$): $I \ltimes (f+g) = (I \ltimes f) + (I \ltimes g) = f + g$

     \item ($\mathsf{Tr}$): $I \ltimes \mathsf{Tr}^W_{P,Q}(f) = \mathsf{Tr}^{IW}_{IP,IQ}(I \ltimes f) = \mathsf{Tr}^{W}_{P,Q}(f)$
   \end{itemize}
   The claim follows.
 \item Similar to (i).
 \item We begin by showing that for any monomial $U$ we have $U \ltimes (Q \ltimes f) = UQ \ltimes f$, which we do by induction on $Q$. The base case is $U \ltimes (0 \ltimes f) = 1_0 = U0 \ltimes f$. For the inductive case suppose $U \ltimes (Q \ltimes f) = UQ \ltimes f$. Then we have:
   \begin{align*}
     & U \ltimes ((V + Q) \ltimes f)
     = U \ltimes ((V \ltimes f) + (Q \ltimes f))
     = (U \ltimes (V \ltimes f)) + (U \ltimes (Q \ltimes f))
     \\&= (UV \ltimes f) + (UQ \ltimes f)
     = (UV + UQ)\ltimes f
     = U(V+Q) \ltimes f
   \end{align*}
   as required. We are now ready to prove the primary claim, which we do by induction on $P$. The base case is $0 \ltimes (Q \ltimes f) = 1_0 = 0Q \ltimes f$. For the inductive case suppose $P \ltimes (Q \ltimes f) = PQ \ltimes f$. Then we have:
   \begin{align*}
     & (U+P) \ltimes (Q \ltimes f)
     = (U \ltimes (Q \ltimes f)) + (P \ltimes (Q \ltimes f))
     = (UQ \ltimes f) + (UP \ltimes f)
     \\&= (UQ + PQ) \ltimes f
     = (U+P)Q \ltimes f
   \end{align*}
   The claim follows. 
 \item We begin by showing that for any monomial $U$ we have $(f \rtimes U) \rtimes Q = f \rtimes UQ$, which we do by induction on $Q$. The base case is $(f \rtimes U) \rtimes 0 = 1_0 = f \rtimes U0$. For the inductive case suppose $(f \rtimes U) \rtimes Q = f \rtimes UQ$. Then we have:
   \begin{align*}
     & (f \rtimes U) \rtimes (V+Q)
     = \delta^l_{RU,V,Q}(((f \rtimes U) \rtimes V) + ((f \rtimes U) \rtimes Q))(\delta^l_{SU,V,Q})^{-1}
     \\&= (R \ltimes \delta^l_{U,V,Q})\delta^l_{R,UV,UQ}((f \rtimes UV) + (f \rtimes UQ))(\delta^l_{S,UV,UQ})^{-1}(S \ltimes (\delta^l_{U,V,Q})^{-1})
     \\&= (R \ltimes 1_{UV+UQ})(f \rtimes (UV+UQ))(S \ltimes 1_{UV+UQ})
     = f \rtimes U(V+Q)
   \end{align*}
   as required. We are now ready to prove the primary claim, which we do by induction on $P$. The base case is $(f \rtimes 0) \rtimes Q = 1_0 = f \rtimes 0Q$. For the inductive case suppose $(f \rtimes P) \rtimes Q = f \rtimes PQ$. Then we have:
   \begin{align*}
     & (f \rtimes (U+P)) \rtimes Q
     = (\delta^l_{R,U,P}((f \rtimes U) + (f \rtimes P))(\delta^l_{S,U,P})^{-1}) \rtimes Q
     \\&= (\delta^l_{R,U,P} \rtimes Q)(((f \rtimes U) \rtimes Q) + ((f \rtimes P) \rtimes Q))((\delta^l_{S,U,P})^{-1} \rtimes Q)
     \\&= \delta^l_{R,UQ,PQ}((f \rtimes UQ) + (f \rtimes PQ))(\delta^l_{S,UQ,PQ})^{-1}
     \\&= f \rtimes (UQ + PQ)
     = f \rtimes (U+P)Q
   \end{align*}
   The claim follows.
 \item We begin by showing that for any monomial $V$, $(P \ltimes f) \rtimes V = P \ltimes (f \rtimes V)$, which we do by induction on $P$. The base case is $(0 \ltimes f) \rtimes V = 1_0 = 0 \ltimes (f \rtimes V)$. For the inductive case suppose $(P \ltimes f) \rtimes V = P \ltimes (f \rtimes V)$. Then we have:
   \begin{align*}
     & ((U+P) \ltimes f) \rtimes V
     = ((U \ltimes f) + (P \ltimes f)) \rtimes V
     = ((U \ltimes f) \rtimes V) + ((P \ltimes f) \rtimes V)
     \\&= (U \ltimes (f \rtimes V)) + (P \ltimes (f \rtimes V))
     = (U+P) \ltimes (f \rtimes V)
   \end{align*}
   as required. We are now ready to prove the primary claim, which we do by induction on $Q$. The base case is $(P \ltimes f) \rtimes 0 = 1_0 = P \ltimes (f \rtimes 0)$. For the inductive case suppose $(P \ltimes f) \rtimes Q = P \ltimes (f \rtimes Q)$. Then we have:
   \begin{align*}
     & (P \ltimes f) \rtimes (V+Q)
     = \delta^l_{PR,V,Q}(((P \ltimes f) \rtimes V) + ((P \ltimes f) \rtimes Q))(\delta^l_{PS,V,Q})^{-1}
     \\&= \delta^l_{PR,V,Q}((P \ltimes + (f \rtimes V)) + (P \ltimes (f \rtimes Q)))(\delta^l_{PS,V,Q})^{-1}
     \\&= \delta^l_{PR,V,Q}(\delta^l_{P,RV,RQ})^{-1}(P \ltimes ((f \rtimes V) + (f \rtimes Q)))\delta^l_{P,SV,SQ}(\delta^l_{PS,V,Q})^{-1}
     \\&= (P \ltimes \delta^l_{R,V,Q})(P \ltimes ((f \rtimes V) + (f \rtimes Q)))(P \ltimes (\delta^l_{S,V,Q})^{-1})
     \\&= P \ltimes (\delta^l_{R,V,Q}((f \rtimes V) + (f \rtimes Q))(\delta^l_{S,V,Q})^{-1})
     \\&= P \ltimes (f \rtimes (V+Q))
   \end{align*}
   The claim follows.
  \end{enumerate}
\end{proof}

At long last, we are able to show that left and right whiskering interchange in the necessary manner:
\begin{lemma}\label{lem:polynomial-interchange}
  For all $f : P \to Q$ and $g : R \to S$ of $\A$, $(f \rtimes R)(Q \ltimes g) = (P \ltimes g)(f \rtimes S)$. 
\end{lemma}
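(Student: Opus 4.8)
The plan is to prove the interchange law by structural induction on $f$, with a secondary structural induction on $g$ once $f$ has been reduced to a generator. I expect the combinator cases to dissolve using the structural properties of whiskering already established. For a composite $f = f_1 f_2$ one has $f \rtimes R = (f_1 \rtimes R)(f_2 \rtimes R)$ by Lemma~\ref{lem:polynomial-whiskering-properties}, so the left-hand side telescopes through the inductive hypotheses for $f_1$ and $f_2$ back into $(P \ltimes g)(f \rtimes S)$. For a sum $f = f_1 + f_2$ I would use that $\rtimes R$ is additive (Lemma~\ref{lem:polynomial-whiskering-properties}) and that polynomial left whiskering is additive in its polynomial index, together with functoriality \textbf{[S3]}, to reduce to the hypotheses for $f_1$ and $f_2$. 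It then remains to treat $f$ a generator, for which a symmetric secondary induction on $g$ reduces matters to $g$ also a generator; here the secondary sum case $g = g_1 + g_2$ is the one place the distributors genuinely intervene, and I would discharge it using the naturality of $\delta^l$ (Lemma~\ref{lem:left-distributor-properties}) and the defining clauses of polynomial whiskering on sums.

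The generator-versus-generator base cases should split cleanly. When both $f$ and $g$ are structural (one of $1$, $\eta$, $\mu$, $1_0$, $\sigma^+$) the identity follows from the cocartesian and symmetric-monoidal naturality axioms \textbf{[S6]}--\textbf{[S12]} after rewriting the whiskerings via Lemma~\ref{lem:monomial-whiskering-properties} and Lemma~\ref{lem:polynomial-whiskering-properties}, and the mixed cases pairing a structural generator with one of $\s,\z,\p$ are of the same character. When both $f$ and $g$ lie in $\{\s,\z,\p\}$ the interchange law becomes, after whiskering, literally one of \textbf{[N3]}--\textbf{[N11]}: for example $f = \s_{U_1,V_1}$ and $g = \s_{U_2,V_2}$ gives $(\s_{U_1,V_1}\rtimes R)(Q\ltimes\s_{U_2,V_2}) = \s_{U_1,V_1U_2NV_2}\,\s_{U_1NV_1U_2,V_2}$, which equals $(Q\ltimes\s_{U_2,V_2})(\s_{U_1,V_1}\rtimes S)$ precisely by \textbf{[N3]} (with $U=U_1$, $V=V_1U_2$, $W=V_2$), the remaining eight combinations being covered by \textbf{[N4]}--\textbf{[N11]}. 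This is the conceptual reason those axioms were imposed.

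The hard part will be the trace case $f = \mathsf{Tr}^W_{\mathcal P,\mathcal Q}(f')$ with $f' : \mathcal P + W \to \mathcal Q + W$, and it is exactly here that uniformity is needed. Using Lemma~\ref{lem:polynomial-whiskering-properties} to compute $f \rtimes R = \mathsf{Tr}^{WR}_{\mathcal PR,\mathcal QR}(f'\rtimes R)$ and $f \rtimes S = \mathsf{Tr}^{WS}_{\mathcal PS,\mathcal QS}(f'\rtimes S)$, and absorbing the surviving left-whiskerings into the trace via \textbf{[S13]}, I would bring the two sides to $\mathsf{Tr}^{WR}_{\mathcal PR,\mathcal QS}(K)$ and $\mathsf{Tr}^{WS}_{\mathcal PR,\mathcal QS}(L)$, where $K = (f'\rtimes R)((\mathcal Q\ltimes g)+1_{WR})$ and $L = ((\mathcal P\ltimes g)+1_{WS})(f'\rtimes S)$. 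The inductive hypothesis applied to the subterm $f'$ gives $(f'\rtimes R)((\mathcal Q+W)\ltimes g) = ((\mathcal P+W)\ltimes g)(f'\rtimes S)$; since left whiskering is additive in its polynomial index this reads $(f'\rtimes R)((\mathcal Q\ltimes g)+(W\ltimes g)) = ((\mathcal P\ltimes g)+(W\ltimes g))(f'\rtimes S)$, and factoring each side through \textbf{[S3]} yields exactly $K(1_{\mathcal QS}+(W\ltimes g)) = (1_{\mathcal PR}+(W\ltimes g))L$. This is precisely the hypothesis of the uniformity axiom (Definition~\ref{def:uniform-traced-monoidal-category}) for the connecting morphism $h = W\ltimes g : WR \to WS$, whence $\mathsf{Tr}^{WR}(K) = \mathsf{Tr}^{WS}(L)$ and the two sides agree. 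I expect the object bookkeeping, and in particular the careful factoring that converts the inductive hypothesis into the uniformity premise with the loop objects $WR$ and $WS$ correctly matched, to be the delicate part; everything else should be routine rewriting with the whiskering and distributor lemmas already in hand.
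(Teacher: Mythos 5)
Your proposal is correct and follows essentially the same route as the paper's own proof: a double structural induction in which the whiskering lemmas dissolve the composition and sum cases, the axioms \textbf{[N3]}--\textbf{[N11]} discharge the $\s/\z/\p$ generator pairs (your instantiation of \textbf{[N3]} with $U=U_1$, $V=V_1U_2$, $W=V_2$ is exactly the paper's $\s$-$\s$ case), and uniformity with connecting morphism $W \ltimes g : WR \to WS$, after tightening via \textbf{[S13]}, closes the trace case just as in the paper. The only quibble is that the distributors intervene beyond the secondary sum case --- also when $g$ is $\mu$ or a trace, since $P \ltimes \mu_Q$ and $S \ltimes \mathsf{Tr}^W_{P,Q}(g')$ are themselves given by $\delta^l$-conjugation --- but the lemmas you cite cover those cases as well.
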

\begin{proof}
  By induction on $f$ and $g$. The base cases are split into two groups. First, in case both $f$ and $g$ are instances of $\s$,$\z$, or $\p$ we have:
  \begin{itemize}
  \item ($\s$-$\s$): If $f$ is $\s_{U,V} : UNV \to UNV$ and $g$ is $\s_{U',V'} : U'NV' \to U'NV'$ then \textbf{[N3]} gives:
    \begin{align*}
      & (\s_{U,V} \rtimes U'NV')(UNV \ltimes \s_{U',V'})
      = \s_{U,VU'NV'}\s_{UNVU',V'}
      \\&= \s_{UNVU',V'}\s_{U,VU'NV'}
      = (UNV \ltimes \s_{U',V'})(\s_{U,V} \rtimes U'NV')
    \end{align*}
    as required.
  \item ($\s$-$\z$): If $f$ is $\s_{U,V} : UNV \to UNV$ and $g$ is $\z_{U',V'} : U'V' \to U'NV'$ then \textbf{[N4]} gives:
    \begin{align*}
      & (\s_{U,V} \rtimes U'V')(UNV \ltimes \z_{U',V'})
      = \s_{U,VU'V'}\z_{UNVU',V'}
      \\&= \z_{UNVU',V'}\s_{U,VU'NV'}
      = (UNV \ltimes \z_{U',V'})(\s_{U,V} \rtimes U'V')
    \end{align*}
  \item ($\z$-$\s$): If $f$ is $\z_{U,V} : UV \to UNV$ and $g$ is $\s_{U',V'} : U'NV' \to U'NV'$ then \textbf{[N5]} gives:
    \begin{align*}
      & (\z_{U,V} \rtimes U'NV')(UNV \ltimes \s_{U',V'})
      = \z_{U,VU'NV'}\s_{UNVU',V'}
      \\&= \s_{UVU',V'}\z_{U,VU'NV'}
      = (UV \ltimes \s_{U',V'})(\z_{U,V} \rtimes U'NV')
    \end{align*}
    as required.
  \item ($\z$-$\z$): If $f$ is $\z_{U,V} : UV \to UNV$ and $g$ is $\z_{U',V'} : U'V' \to U'NV'$ then \textbf{[N6]} gives:
    \begin{align*}
      & (\z_{U,V} \rtimes U'V')(UNV \ltimes \z_{U',V'})
      = \z_{U,VU'V'}\z_{UNVU',V'}
      \\&= \z_{UVU',V'}\z_{U,VU'NV'}
      = (UV \ltimes \z_{U',V'})(\z_{U,V} \rtimes U'NV')
    \end{align*}
  \item ($\s$-$\p$): If $f$ is $\s_{U,V} : UNV \to UNV$ and $g$ is $\p_{U',V'} : U'NV' \to U'V' + U'NV'$ then \textbf{[N7]} gives:
    \begin{align*}
      & (\s_{U,V} \rtimes U'NV')(UNV \ltimes \p_{U',V'})
      = \s_{U,VU'NV'}\p_{UNVU',V'}
      \\&= \p_{UNVU',V'}(\s_{U,VU'V'} + \s_{U,VU'NV'})
      \\&= (UNV \ltimes \p_{U',V'})((\s_{U,V} \rtimes U'V') + (\s_{U,V} \rtimes U'NV'))
      \\&= (UNV \ltimes \p_{U',V'})(\s_{U,V} \rtimes U'V' + U'NV')
    \end{align*}
    as required.
  \item ($\z$-$\p$): If $f$ is $\z_{U,V} : UV \to UNV$ and $g$ is $\p_{U',V'} : U'NV' \to U'V' + U'NV'$ then \textbf{[N8]} gives:
    \begin{align*}
      & (\z_{U,V} \rtimes U'NV')(UNV \ltimes \p_{U',V'})
      = \z_{U,VU'NV'}\p_{UNVU',V'}
      \\&= \p_{UVU',V'}(\z_{U,VU'V'} + \z_{U,VU'NV'})
      \\&= (UV \ltimes \p_{U',V'})( (\z_{U,V} \rtimes U'V') + (\z_{U,V} \rtimes U'NV'))
      \\&= (UV \ltimes \p_{U',V'})(\z_{U,V} \rtimes U'V' + U'NV')
    \end{align*}
    as required.
  \item ($\p$-$\s$): If $f$ is $\p_{U,V} : UNV \to UV + UNV$ and $g$ is $\s_{U',V'} : U'NV' \to U'NV'$ then $\textbf{[N9]}$ gives:
    \begin{align*}
      & (\p_{U,V} \rtimes U'NV')((UV + UNV) \ltimes \s_{U',V'})
      \\&= (\p_{U,V} \rtimes U'NV')((UV \ltimes \s_{U',V'}) + (UNV + \ltimes \s_{U',V'}))
      \\&= \p_{U,VU'NV'}(\s_{UVU',V'} + \s_{UNVU',V'})
      = \s_{UNVU',V'}\p_{U,VU'NV'}
      \\&= (UNV \ltimes \s_{U',V'})(\p_{U,V} \rtimes U'NV')
    \end{align*}
    as required.
  \item ($\p$-$\z$): If $f$ is $\p_{U,V} : UNV \to UV + UNV$ and $g$ is $\z_{U',V'} : U'V' \to U'NV'$ then $\textbf{[N10]}$ gives:
    \begin{align*}
      & (\p_{U,V} \rtimes U'V')((UV + UNV) \ltimes \z_{U',V'})
      = \p_{U,VU'V'}(\z_{UVU',V'} + \z_{UNVU',V'})
      \\&= \z_{UNVU',V'}\p_{U,VU'NV'}
      = (UNV \ltimes \z_{U',V'})(\p_{U,V} \rtimes U'NV')
    \end{align*}
    as required.
  \item ($\p$-$\p$): If $f$ is $\p_{U,V} : UNV \to UV + UNV$ and $g$ is $p_{U',V'} : U'NV' \to U'V' + U'NV'$ then $\textbf{[N11]}$ gives:
    \begin{align*}
      & (\p_{U,V} \rtimes U'NV')((UV + UNV) \ltimes \p_{U',V'})
      \\&= \p_{U,VU'NV'}((UV \ltimes \p_{U',V'}) + (UNV \ltimes \p_{U',V'}))
      \\&= \p_{U,VU'NV'}(\p_{UVU',V'} + \p_{UNVU',V'})
      \\&= \p_{UNVU',V'}(\p_{U,VU'V'} + \p_{U,VU'NV'})(1_{UVU'V'} + \sigma^+_{UVU'NV',UNVU'V'} + 1_{UNVU'NV'})
      \\&= (UNV \ltimes \p_{U',V'})\delta^l_{UNV,U'V',U'NV'}((\p_{U,V} \rtimes U'V') + (\p_{U,V} \rtimes U'NV'))(\delta^l_{UV+UNV,U'V',U'NV'})^{-1}
      \\&= (UNV \ltimes \p_{U',V'})(\p_{U,V} \rtimes U'V' + U'NV')
    \end{align*}
    as required. 
  \end{itemize}
  Next, in case neither $f$ nor $g$ is an instance of $\s$,$\z$, or $\p$ we have:
  \begin{itemize}
  \item ($1$-$g$): If $f$ is $1_U : U \to U$ and $g : R \to S$ then we have:
    \begin{align*}
      & (1_U \rtimes R)(U \ltimes g)
      = 1_{UR}(U \ltimes g)
      = (U \ltimes g)1_{US}
      = (U \ltimes g)(1_U \rtimes S)
    \end{align*}
    as required.
  \item ($f$-$1$): If $f : R \to S$ and $g$ is $1_U : U \to U$ then we have:
    \begin{align*}
      & (f \rtimes U)(S \ltimes 1_U)
      = (f \rtimes U)1_{SU}
      = 1_{RU}(f \rtimes U)
      = (R \ltimes 1_U)(f \rtimes U)
    \end{align*}
    as required.
  \item ($\eta$-$g$): If $f$ is $\eta_U : 0 \to U$ and $g : R \to S$ then we have:
    \begin{align*}
      & (\eta_U \rtimes R)(U \ltimes g)
      = \eta_{UR}(U \ltimes g)
      = \eta_{US}
      = 1_0\eta_{US}
      = (0 \ltimes g)(\eta_U \rtimes S)
    \end{align*}
    as required.
  \item ($f$-$\eta$): If $f : R \to S$ and $g$ is $\eta_U : 0 \to U$ then we have:
    \begin{align*}
      & (f \rtimes 0)(S \ltimes \eta_U)
      = 1_0\eta_{SU}
      = \eta_{SU}
      = \eta_{RU}(f \rtimes U)
      = (R \ltimes \eta_U)(f \rtimes U)
    \end{align*}
    as required.
  \item ($\mu$-$g$): If $f$ is $\mu_U : U + U \to U$ and $g : R \to S$ then we have:
    \begin{align*}
      & (\mu_U \rtimes R)(U \ltimes g)
      = \mu_{UR}(U \ltimes g)
      = ((U \ltimes g) + (U \ltimes g))\mu_{US}
      = ((U+U) \ltimes g)(\mu_U \rtimes S)
    \end{align*}
    as required.
  \item ($f$-$\mu$): If $f : R \to S$ and $g$ is $\mu_U : U+U \to U$ then we have:
    \begin{align*}
      & (f \rtimes (U+U))(S \ltimes \mu_U)
      = \delta^l_{R,U,U}((f \rtimes U) + (f \rtimes U))(\delta^l_{S,U,U})^{-1}\delta^l_{S,U,U}\mu_{SU}
      \\&= \delta^l_{R,U,U}\mu_{RU}(f \rtimes U)
      = (R \ltimes \mu_U)(f \rtimes U)
    \end{align*}
    as required.
  \item ($0$-$g$): If $f$ is $1_0 : 0 \to 0$ and $g : R \to S$ then we have:
    \begin{align*}
      & (1_0 \rtimes R)(0 \ltimes g)
      = 1_0
      = (0 \ltimes g)(1_0 \rtimes S)
    \end{align*}
    as required.
  \item ($f$-$0$): If $f : R \to S$ and $g$ is $1_0 : 0 \to 0$ then we have:
    \begin{align*}
      & (f \rtimes 0)(S \ltimes 1_0)
      = 1_0
      = (R \ltimes 1_0)(f \rtimes 0)
    \end{align*}
    as required.
  \item ($\sigma$-$g$): If $f$ is $\sigma^+_{U,V} : U+V \to V+U$ and $g : R \to S$ then we have:
    \begin{align*}
      & (\sigma^+_{U,V} \rtimes R)((V+U) \ltimes g)
      = \sigma^+_{UR,VR}((V \ltimes g) + (U \ltimes g))
      \\&= ((U \ltimes g) + (V \ltimes g))\sigma^+_{US,VS}
      = ((U+V) \ltimes g)(\sigma^+_{U,V} \rtimes S)
    \end{align*}
    as required.
  \item ($f$-$\sigma$): If $f : R \to S$ and $g$ is $\sigma^+_{U,V} : U+V \to V+U$ then we have:
    \begin{align*}
      & (f \rtimes (U+V))(S \ltimes \sigma^+_{U,V})
      = \delta^l_{R,U,V}((f \rtimes U) + (f \rtimes V))(\delta^l_{S,U,V})^{-1}(S \ltimes \sigma^+_{U,V})
      \\&= \delta^l_{R,U,V}((f \rtimes U) + (f \rtimes V))\sigma^+_{SU,SV}(\delta^l_{S,V,U})^{-1}
      = \delta^l_{R,U,V}\sigma^+_{RU,RV}((f \rtimes U) + (f \rtimes V))(\delta^l_{S,V,U})^{-1}
      \\&= (R \ltimes \sigma^+_{U,V})\delta^l_{R,V,U}((f \rtimes V) + (f \rtimes U))(\delta^l_{S,V,U})^{-1}
      = (R \ltimes \sigma^+_{U,V})(f \rtimes (V+U))
    \end{align*}
    as required.
  \end{itemize}
  Finally, the inductive cases are as follows:
  \begin{itemize}
  \item ($\mathsf{Tr}$-$g$): Given $\mathsf{Tr}^U_{P,Q}(f) : P \to Q$ and $g : R \to S$ for some $f : P+U \to Q+U$ such that $(f \rtimes R)((Q+U) \ltimes g) = ((P+U) \ltimes g)(f \rtimes S)$ we have:
    \begin{align*}
      & (f \rtimes R)((Q \ltimes g) + (U \ltimes g))
      = (f \rtimes R)((Q+U) \ltimes g)
      \\&= ((P+U) \ltimes g)(f \rtimes S)
      = ((P \ltimes g) + (U \ltimes g))(f \rtimes S)
    \end{align*}
    and so by the uniformity axiom we have
    \[
    \mathsf{Tr}^{UR}_{PR,QR}(f \rtimes R)(Q \ltimes g)
    =
    (P \ltimes g)\mathsf{Tr}^{US}_{PS,QS}(f \rtimes S)
    \]
    as in:
    \[
    \includegraphics[height=3cm,align=c]{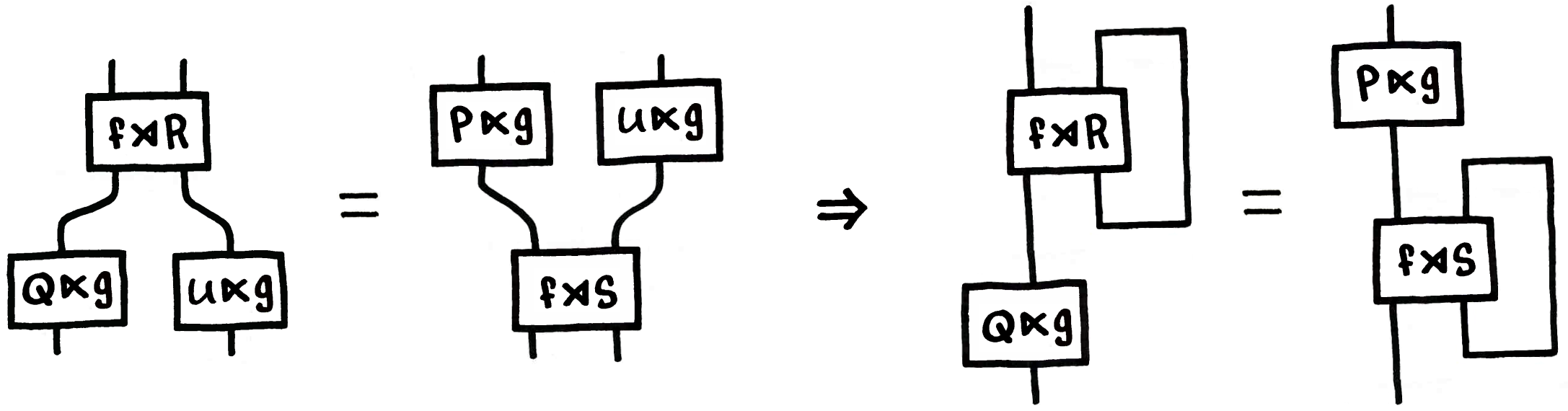}
    \]
    which in turn gives:
    \begin{align*}
      & (\mathsf{Tr}^U_{P,Q}(f) \rtimes R)(Q \ltimes g)
      = \mathsf{Tr}^{UR}_{PR,QR}(f \rtimes R)(Q \ltimes g)
      \\&= (P \ltimes g)\mathsf{Tr}^{US}_{PS,QS}(f \rtimes S)
      = (P \ltimes g)(\mathsf{Tr}^U_{P,Q}(f) \rtimes S)
    \end{align*}
    as required.
  \item ($f$-$\mathsf{Tr}$): Given $f : R \to S$ and $\mathsf{Tr}^{U}_{P,Q}(g) : P \to Q$ for some $g$ with $(f \rtimes (P+U))(S \ltimes g) = (R \ltimes g)(f \rtimes (Q+U))$ we have:
    \begin{align*}
      & (\delta^l_{R,P,U})^{-1}(R \ltimes g)\delta^l_{S,Q,U}((f \rtimes Q) + (f \rtimes U))
      = (\delta^l_{R,P,U})^{-1}(R \ltimes g)(f \rtimes (Q+U))\delta^l_{S,Q,U}
      \\&= (\delta^l_{R,P,U})^{-1}(f \rtimes (P+U))(S \ltimes g)\delta^l_{S,Q,U}
      = ((f \rtimes P) + (f \rtimes U))(\delta^l_{S,P,U})^{-1}(S \ltimes g)\delta^l_{S,Q,U}
    \end{align*}
    and so by the uniformity axiom we have:
    \[
    \mathsf{Tr}^{RU}_{RP,RQ}((\delta^l_{R,P,U})^{-1}(R \ltimes g)\delta^l_{R,Q,U})(f \rtimes Q)
    =
    (f \rtimes P)\mathsf{Tr}^{SU}_{SP,SQ}((\delta^l_{S,P,U})^{-1}(S \ltimes g)\delta^l_{S,Q,U})
    \]
    as in:
    \[
    \includegraphics[height=4.5cm,align=c]{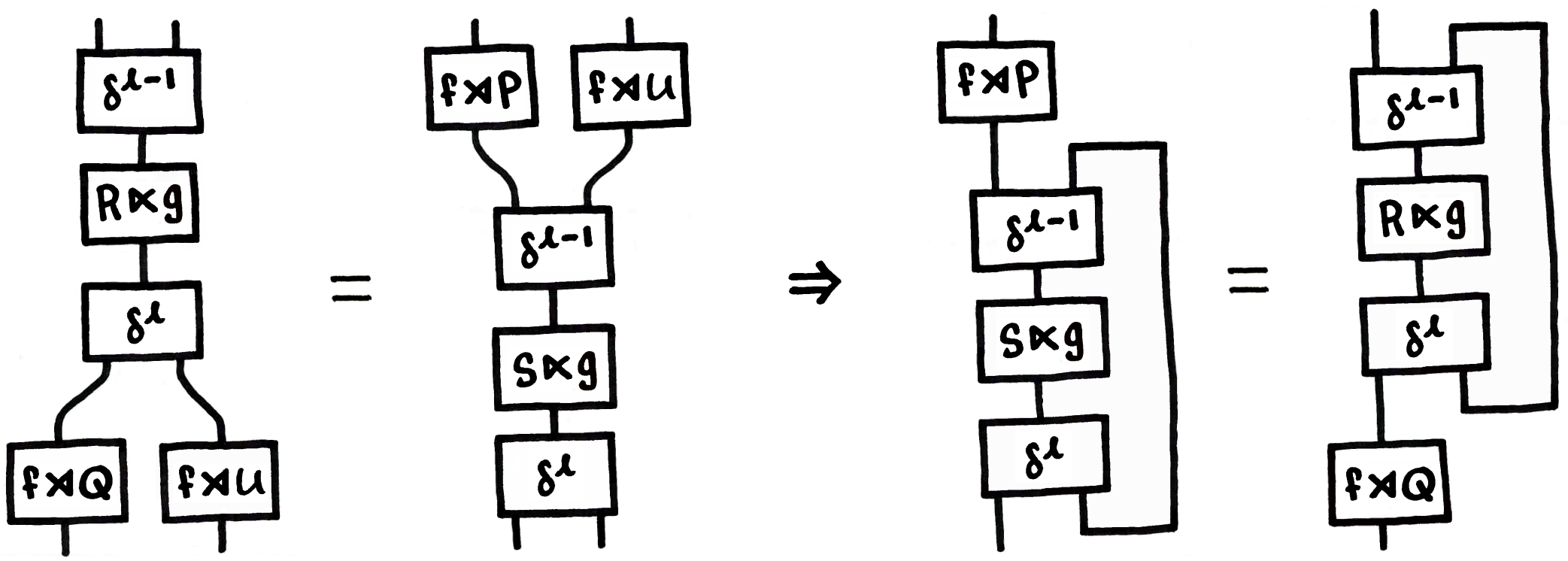}
    \]
    which in turn gives:
    \begin{align*}
      & (f \rtimes P)(S \ltimes \mathsf{Tr}^U_{P,Q}(g))
      = (f \rtimes P)\mathsf{Tr}^{SU}_{SP,SQ}((\delta^l_{S,P,U})^{-1}(S \ltimes g)\delta^l_{S,Q,U})
      \\&= \mathsf{Tr}^{RU}_{RP,RQ}((\delta^l_{R,P,U})^{-1}(R \ltimes g)\delta^l_{R,Q,U})(f \rtimes Q)
      = (R \ltimes \mathsf{Tr}^U_{P,Q}(g))(f \rtimes Q)
    \end{align*}
    as required.
  \item ($\circ$-$g$): Given $f_1 : P \to T$, $f_2 : T \to Q$, and $g : R \to S$ such that $(f_1 \rtimes R)(T \ltimes g) = (P \ltimes g)(f_1 \rtimes S)$ and $(f_2 \rtimes R)(Q \ltimes g) = (T \ltimes g)(f_2 \rtimes S)$, we have:
    \begin{align*}
      & (f_1f_2 \rtimes R)(Q \ltimes g)
      = (f_1 \rtimes R)(f_2 \rtimes R)(Q \ltimes g)
      \\&= (f_1 \rtimes R)(T \ltimes g)(f_2 \rtimes S)
      = (P \ltimes g)(f_1f_2 \rtimes S)
    \end{align*}
    as required. 
  \item ($f$-$\circ$): Given $f : R \to S$, $g_1 : P \to T$, and $g_2 : T \to Q$ such that $(f \rtimes P)(S \ltimes g_1) = (R \ltimes g_1)(f \rtimes T)$ and $(f \rtimes T)(S \ltimes g_2) = (R \ltimes g_2)(f \rtimes Q)$, we have:
    \begin{align*}
      & (f \rtimes P)(S \ltimes g_1g_2)
      = (f \rtimes P)(S \ltimes g_1)(S \ltimes g_2)
      \\&= (R \ltimes g_1)(f \rtimes T)(S \ltimes g_2)
      = (R \ltimes g_1g_2)(f \rtimes Q)
    \end{align*}
    as required. 
  \item ($+$-$g$): Given $f_1 + f_2 : P_1 + P_2 \to Q_1 + Q_2$ and $g : R \to S$ such that $(f_1 \rtimes R)(Q_1 \ltimes g) = (P_1 \ltimes g)(f_1 \rtimes S)$ and $(f_2 \rtimes R)(Q_2 \ltimes g) = (P_2 \ltimes g)(f_2 \rtimes S)$, we have:
    \begin{align*}
      & ((f_1 + f_2) \rtimes R)((Q_1 + Q_2) \ltimes g)
      = ((f_1 \rtimes R) + (f_2 \rtimes R))((Q_1 \ltimes g) + (Q_2 \ltimes g))
      \\&= (f_1 \rtimes R)(Q_1 \ltimes g) + (f_2 \rtimes R)(Q_2 \ltimes g)
      = (P_1 \ltimes g)(f_1 \rtimes S) + (P_2 \ltimes g)(f_2 \rtimes S)
      \\&= ((P_1 + P_2) \ltimes g) + ((f_1 + f_2) \rtimes S)
    \end{align*}
    as required. 
  \item ($f$-$+$): Given $f: R \to S$ and $g_1 + g_2 : P_1 + P_2 \to Q_1 + Q_2$ such that $(f \rtimes P_1)(S \ltimes g_1) = (R \ltimes g_1)(f \rtimes Q_1)$ and $(f \rtimes P_2)(S \ltimes g_2) = (R \ltimes g_2)(f \rtimes Q_2)$, we have:
    \begin{align*}
      & (f \rtimes (P_1 + P_2))(S \ltimes (g_1 + g_2))
      \\&= \delta^l_{R,P_1,P_2}((f \rtimes P_1) + (f \rtimes P_2))(\delta^l_{S,P_1,P_2})^{-1}\delta^l_{S,P_1,P_2}((S \ltimes g_1) + (S \ltimes g_2))(\delta^l_{S,Q_1,Q_2})^{-1}
      \\&= \delta^l_{R,P_1,P_2}((f \rtimes P_1)(S \ltimes g_1) + (f \rtimes P_2)(S \ltimes g_2))(\delta^l_{S,Q_1,Q_2})^{-1}
      \\&= \delta^l_{R,P_1,P_2}((R \ltimes g_1)(f \rtimes Q_1) + (R \ltimes g_2)(f \rtimes Q_2))(\delta^l_{S,Q_1,Q_2})^{-1}
      \\&= \delta^l_{R,P_1,P_2}((R \ltimes g_1) + (R \ltimes g_2))(\delta^l_{R,Q_1,Q_2})^{-1}\delta^l_{R,Q_1,Q_2}((f \rtimes Q_1) + (f \rtimes Q_2))(\delta^l_{S,Q_1,Q_2})^{-1}
      \\&= (R \ltimes (g_1 + g_2))(f \rtimes (Q_1 + Q_2))
    \end{align*}
  \end{itemize}
  and the claim follows by induction. 
\end{proof}

This allows us to prove that our proposed multiplicative monoidal structure on $\A$ does in fact define a monoidal category:
\begin{lemma}\label{lem:tensor-monoidal-structure}
  $(\A,\otimes,I)$ is a strict monoidal category with $- \otimes - : \A \times \A \to \A$ defined on arrows $f : P \to Q$ and $g : R \to S$ of $\A$ as in:
  \[
  f \otimes g = (f \rtimes R)(Q \ltimes g)
  \]
\end{lemma}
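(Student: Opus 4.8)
The plan is to verify directly the three conditions defining a strict monoidal category: that $-\otimes-$ is a functor $\A \times \A \to \A$, that $I$ is a strict two-sided unit, and that $\otimes$ is strictly associative. The object-level identities $IP = P = PI$ and $P(QR) = (PQ)R$ have already been recorded, so all of the remaining work lives at the level of morphisms. Throughout I would rely on three earlier results: the functoriality of the whiskering operations (Lemma~\ref{lem:polynomial-whiskering-properties}), their mutual coherence (Lemma~\ref{lem:polynomial-coherence}), and above all the interchange law (Lemma~\ref{lem:polynomial-interchange}).

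The crux of the argument is functoriality of $\otimes$. Preservation of identities is immediate, since $1_P \otimes 1_R = (1_P \rtimes R)(P \ltimes 1_R) = 1_{PR}1_{PR} = 1_{PR}$. For preservation of composition, given $f : P \to Q$, $f' : Q \to Q'$, $g : R \to S$, and $g' : S \to S'$, I would expand $(f \otimes g)(f' \otimes g') = (f \rtimes R)(Q \ltimes g)(f' \rtimes S)(Q' \ltimes g')$ and apply interchange to the middle pair, rewriting $(Q \ltimes g)(f' \rtimes S)$ as $(f' \rtimes R)(Q' \ltimes g)$. After this exchange, functoriality of the two whiskerings (items in Lemma~\ref{lem:polynomial-whiskering-properties}) collapses $(f \rtimes R)(f' \rtimes R)$ into $(ff') \rtimes R$ and $(Q' \ltimes g)(Q' \ltimes g')$ into $Q' \ltimes (gg')$, yielding $((ff') \rtimes R)(Q' \ltimes (gg')) = (ff') \otimes (gg')$. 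This is the step I expect to be the main obstacle: it is exactly here that the two equivalent formulas for $f \otimes g$ must be reconciled, and it is the only place where interchange is genuinely needed. Everything else is bookkeeping with the coherence identities.

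The unit laws are short. For $g : R \to S$ we have $1_I \otimes g = (1_I \rtimes R)(I \ltimes g) = 1_R\,g = g$, using $1_I \rtimes R = 1_{IR} = 1_R$ and $I \ltimes g = g$ from Lemma~\ref{lem:polynomial-coherence}; dually, for $f : P \to Q$ we have $f \otimes 1_I = (f \rtimes I)(Q \ltimes 1_I) = f\,1_Q = f$, using $f \rtimes I = f$.

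For associativity I would expand both sides against the coherence identities. With $f : P \to Q$, $g : R \to S$, $h : T \to U$, the left-hand side is $(f \otimes g)\otimes h = ((f \otimes g) \rtimes T)(QS \ltimes h)$. Functoriality of right whiskering splits $(f \otimes g) \rtimes T$, and the coherence equations $(f \rtimes R)\rtimes T = f \rtimes RT$, $(Q \ltimes g)\rtimes T = Q \ltimes (g \rtimes T)$, and $QS \ltimes h = Q \ltimes (S \ltimes h)$ of Lemma~\ref{lem:polynomial-coherence}, combined with functoriality of left whiskering, rewrite it as $(f \rtimes RT)(Q \ltimes (g \otimes h))$. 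Since $g \otimes h : RT \to SU$, this is precisely $f \otimes (g \otimes h)$. As all coherence isomorphisms are identities, no further coherence conditions remain, and the three verifications together establish that $(\A,\otimes,I)$ is a strict monoidal category.
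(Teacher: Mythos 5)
Your proposal is correct and follows essentially the same route as the paper's own proof: identity preservation by direct computation, composition preservation via the interchange law (Lemma~\ref{lem:polynomial-interchange}) applied to the middle pair followed by functoriality of the whiskerings, and unitality and strict associativity via the coherence identities of Lemma~\ref{lem:polynomial-coherence}. You also correctly identify interchange as the sole genuinely substantive step, which matches where the paper's preparatory work is concentrated.
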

\begin{proof}
  We begin by showing $-\otimes-$ is a functor. It preserves identities as in:
  \begin{align*}
    & 1_P \otimes 1_Q
    = (1_P \rtimes Q)(P \ltimes 1_Q)
    = 1_{PQ}1_{PQ}
    = 1_{PQ}
    = 1_{P \otimes Q}
  \end{align*}
  and given $f : P \to Q$, $g : R \to S$, $h : Q \to T$, $k : S \to H$ we have that $-\otimes-$ preserves composition as in:
  \begin{align*}
    & (f \otimes g)(h \otimes k)
    = (f \rtimes R)(Q \ltimes g)(h \rtimes S)(T \ltimes k)
    \\&= (f \rtimes R)(h \rtimes R)(T \ltimes g)(T \ltimes k)
    \\&= (hf \rtimes R)(T \ltimes gk)
    = hf \otimes gk
  \end{align*}
  Moreover, we have that $-\otimes-$ is associative as in:
  \begin{align*}
    & (f \otimes g) \otimes h
    = ((f \otimes g) \rtimes T)(QS \ltimes h)
    \\&= ((f \rtimes R)(Q \ltimes g) \rtimes T)(QS \ltimes h)
    \\&= ((f \rtimes R) \rtimes T)((Q \ltimes g) \rtimes T)(Q \ltimes (S \ltimes h))
    \\&= ((f \rtimes R) \rtimes T)(Q \ltimes (g \rtimes T))(Q \ltimes (S \ltimes h))
    \\&= (f \rtimes RT)(Q \ltimes (g \rtimes T)(S \ltimes h))
    \\&= (f \rtimes RT)(Q \ltimes (g \otimes h))
    = f \otimes (g \otimes h)
  \end{align*}
  and finally we have that $-\otimes-$ is unital as in:
  \[
  f \otimes 1_I = (f \rtimes I)(Q \ltimes 1_I) = f1_Q = f
  \]
  and
  \[
  1_I \otimes f = (1_I \rtimes P)(I \ltimes f) = 1_Pf = f
  \]
  The claim follows.
\end{proof}

From here it is easy to see that $\otimes$ distributes over $+$:
\begin{lemma}
  $(\A,(+,0,\sigma^+),(\otimes,I),(\mu,\eta),(1,1),(\delta^l,1))$ is a right-strict distributive monoidal category. 
\end{lemma}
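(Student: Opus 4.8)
The plan is to check each clause of the definition of right-strict distributive monoidal category (Appendix~\ref{sec:distributive-monoidal-categories}) against the structural facts assembled above, of which it is almost entirely a repackaging. The two substantive clauses --- that $(\A,+,0,\sigma^+)$ is a symmetric strict monoidal category, and that $(\A,\otimes,I)$ is a strict monoidal category --- are exactly the opening lemma of this appendix (cocartesian, hence symmetric monoidal) and Lemma~\ref{lem:tensor-monoidal-structure}; both are therefore available at once. What remains is to translate the whiskering and distributor identities into statements phrased in terms of $\otimes$.

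The key preliminary is the pair of identities $1_P \otimes f = P \ltimes f$ and $f \otimes 1_Q = f \rtimes Q$, which follow immediately from the definition $f \otimes g = (f \rtimes R)(Q \ltimes g)$ together with $1_P \rtimes R = 1_{PR}$ (Lemma~\ref{lem:polynomial-whiskering-properties}(ix)) and $S \ltimes 1_Q = 1_{SQ}$ (Lemma~\ref{lem:polynomial-whiskering-properties}(i)). With these, the ten coherence equations for $\delta^l$ become restatements of already-proven facts: the four degenerate cases are Lemma~\ref{lem:left-distributor-properties}(i)--(iv); the compatibility $\delta^l_{A,B,C}\sigma^+_{AB,AC} = (1_A \otimes \sigma^+_{B,C})\delta^l_{A,C,B}$ is Lemma~\ref{lem:polynomial-whiskering-properties}(ii); the identity $\sigma^+_{AC,BC} = \sigma^+_{A,B}\otimes 1_C$ is Lemma~\ref{lem:polynomial-whiskering-properties}(x); and the four associativity/additivity coherences are Lemma~\ref{lem:left-distributor-properties}(v)--(viii). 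The object-level strictness $A0 = 0$, $0A = 0$, $(A+B)C = AC+BC$ is the lemma recording the monoid laws for $\otimes$ on objects, and the morphism-level strictness $f \otimes 1_0 = 1_0$, $1_0 \otimes f = 1_0$, $(f+g)\otimes h = (f\otimes h)+(g\otimes h)$ unfolds through the same two identities using $f \rtimes 0 = 1_0$, $0 \ltimes f = 1_0$, and the additivity of right whiskering (Lemma~\ref{lem:polynomial-whiskering-properties}(xvi)).

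The one clause requiring genuine assembly is the naturality of $\delta^l$. I would prove it by treating the multiplicative and the two additive variables separately and recombining via the bifunctoriality of $\otimes$ from Lemma~\ref{lem:tensor-monoidal-structure}. Naturality in the multiplicative variable, namely $(f \otimes 1_{B+C})\delta^l_{A',B,C} = \delta^l_{A,B,C}((f\otimes 1_B) + (f\otimes 1_C))$, is simply the defining equation of polynomial right whiskering $f \rtimes (B+C) = \delta^l_{A,B,C}((f\rtimes B)+(f\rtimes C))(\delta^l_{A',B,C})^{-1}$ post-composed with $\delta^l_{A',B,C}$; naturality in the additive variables is Lemma~\ref{lem:left-distributor-properties}(ix). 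Factoring $f \otimes (g+h) = (f \otimes 1_{B+C})(1_{A'}\otimes (g+h))$ and pushing $\delta^l$ through each factor in turn then yields the full naturality square after one application of the interchange law \textbf{[S3]}.

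Finally, I would note that $\delta^l$ is invertible, by induction on its first subscript: the base case $\delta^l_{0,Q,R} = 1_0$ is invertible, and $\delta^l_{U+P,Q,R} = (1_{UQ+UR} + \delta^l_{P,Q,R})(1_{UQ} + \sigma^+_{UR,PQ} + 1_{PR})$ is a composite of an invertible symmetry with $1 + \delta^l_{P,Q,R}$, invertible by hypothesis. I do not expect a real obstacle here: the labour was front-loaded into the whiskering and distributor lemmas, so this proof is essentially a dictionary matching each axiom to its precomputed counterpart. The only place demanding care is keeping the variable bookkeeping in the naturality argument straight and invoking the identifications $1_P \otimes f = P \ltimes f$ and $f \otimes 1_Q = f \rtimes Q$ consistently throughout.
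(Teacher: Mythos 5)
Your proposal is correct and takes essentially the same approach as the paper's proof: both reduce the statement to Lemma~\ref{lem:tensor-monoidal-structure} and the additive structure built into the construction, match the coherence equations item-by-item against Lemma~\ref{lem:left-distributor-properties} and Lemma~\ref{lem:polynomial-whiskering-properties}, and establish naturality of $\delta^l$ from the defining equation of polynomial right whiskering together with Lemma~\ref{lem:left-distributor-properties}(ix) and one use of \textbf{[S3]} --- your factorisation $f \otimes (g+h) = (f \otimes 1_{Q+R})(1_{P'} \otimes (g+h))$ is the paper's single chain of equalities read in the opposite order. Your explicit inductive verification that $\delta^l$ is invertible is a detail the paper leaves tacit (it is already implicit in the use of $(\delta^l)^{-1}$ in the definition of right whiskering), and is a harmless addition.
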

\begin{proof}
  We have already seen that $(\A,\otimes,I)$ is a strict monoidal category (Lemma~\ref{lem:tensor-monoidal-structure}), and we know that $(\A,+,0,\sigma^+)$ is a symmetric strict monoidal category. To see that $\delta^l$ is a natural transformation, suppose $f : P \to P'$, $g : Q \to Q'$, and $h : R \to R'$ in $\A$. Then we have:
  \begin{align*}
    & \delta^l_{P,Q,R}((f \otimes g) + (f \otimes h))
    = \delta^l_{P,Q,R}( (f \rtimes Q)(P' \ltimes g) + (f \rtimes R)(P' \ltimes h))
    \\&= \delta^l_{P,Q,R} ((f \rtimes Q) + (f \rtimes R))((P' \ltimes g) + (P' \ltimes h))
    \\&= \delta^l_{P,Q,R} ((f \rtimes Q) + (f \rtimes R))(\delta^l_{P',Q,R})^{-1}\delta^l_{P',Q,R}((P' \ltimes g) + (P' \ltimes h))
    \\&= (f \rtimes (Q+R))(P' \ltimes (g + h))\delta^l_{P',Q',R'}
    = (f \otimes (g + h))\delta^l_{P',Q',R'}
  \end{align*}
  and so $\delta^l$ is natural, as required. Next, for right dsitributivity we have:
  \begin{align*}
    & (f + g) \otimes h
    = ((f + g) \rtimes R)((P' + Q') \ltimes h)
    \\&= ((f \rtimes R) + (g \rtimes R))((P' \ltimes h) + (Q' \ltimes h))
    \\&= (f \rtimes R)(P' \ltimes h) + (g \rtimes R)(Q' \ltimes h)
    = (f \otimes h) + (g \otimes h)
  \end{align*}
  and for left and right annihilation we have:
  \[
  f \otimes 1_0 = (f \rtimes 0)(P' \ltimes 1_0) = 1_01_0 = 1_0
  \]
  and
  \[
  1_0 \otimes f = (1_0 \rtimes P)(0 \ltimes f) = 1_01_0 = 1_0
  \]
  Of the remaining conditions $\A$ must satisfy to be a right-strict distributive monoidal category, all but one are shown to hold in Lemma~\ref{lem:left-distributor-properties}. For the final condition we have $\sigma^+_{PR,QR} = \sigma^+_{P,Q} \rtimes R = \sigma^+_{P,Q} \otimes 1_R$ via Lemma~\ref{lem:polynomial-whiskering-properties}. The claim follows. 
\end{proof}

Now since $\textbf{[N1]}$ and $\textbf{[N2]}$ give an isomorphism $I+N \cong N$ we may conclude:
\begin{theorem}\label{thm:appendix-abacus-elgot}
  $(\A,(+,0,\sigma^+),(\otimes,I),(\mu,\eta),\mathsf{Tr},(1,1),(\delta^l,1),(N,\iota))$ is a right-strict Elgot category.
\end{theorem}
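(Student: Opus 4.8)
The plan is to assemble the final statement directly from the lemmas already established in this appendix, since each clause of the definition of (right-strict) Elgot category has been verified separately. Recall that a pre-Elgot category consists of three ingredients: a distributive monoidal structure, a traced monoidal structure on the additive fragment, and a distinguished object $N$ equipped with an isomorphism $\iota : I + N \to N$; an Elgot category is a pre-Elgot category whose trace is uniform. So the proof is a matter of citing the right lemma for each ingredient and then checking the isomorphism condition by hand.

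First I would invoke the opening lemma of this section, which records that $((\A,+,0,\sigma^+),(\mu,\eta))$ is cocartesian monoidal and that $((\A,+,0,\sigma^+),\mathsf{Tr})$ is a \emph{uniform} traced monoidal category; both hold by construction, since the generating equations \textbf{[S1]}--\textbf{[S16]} together with the uniformity built into the trace definition are precisely the axioms in question. This dispatches the traced-structure requirement and, crucially, the uniformity condition that upgrades pre-Elgot to Elgot. Second I would cite the penultimate lemma, which establishes that $(\A,(+,0,\sigma^+),(\otimes,I),(\mu,\eta),(1,1),(\delta^l,1))$ is a right-strict distributive monoidal category. This is where the substantive content lives: it rests on Lemma~\ref{lem:tensor-monoidal-structure} (that $(\A,\otimes,I)$ is strict monoidal), which in turn depends on Lemma~\ref{lem:polynomial-interchange} (that left and right whiskering interchange), itself built from the whiskering and distributor coherences of Lemmas~\ref{lem:monomial-whiskering-properties}, \ref{lem:polynomial-whiskering-properties}, \ref{lem:left-distributor-properties}, and \ref{lem:polynomial-coherence}. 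Right-strictness is immediate, as the right distributor and the annihilators are taken to be identities by construction.

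Finally I would observe that the axioms \textbf{[N1]} and \textbf{[N2]}, instantiated at $U = V = I$, assert exactly that $(\z_{I,I}+\s_{I,I})\mu_N$ and $\p_{I,I}$ are mutually inverse, so that $\iota = (\z_{I,I}+\s_{I,I})\mu_N : I+N \to N$ is an isomorphism with inverse $\iota^{-1} = \p_{I,I} : N \to I+N$. Combining these three observations verifies every clause of the definition, and since the trace is uniform the result is a right-strict Elgot category.

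The only genuinely hard step in the whole development is the interchange law of Lemma~\ref{lem:polynomial-interchange}: bifunctoriality of $\otimes$ forces one to commute whiskered traces past one another, and this is exactly the point at which \emph{uniformity} of the trace is indispensable, since without it the two candidate whiskerings of a traced morphism need not agree. For the present theorem, however, that labour has already been carried out in the preceding lemmas, so the proof itself reduces to the short assembly sketched above.
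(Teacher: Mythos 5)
Your proposal is correct and takes essentially the same route as the paper, which likewise concludes the theorem by combining the opening lemma of the appendix (cocartesian structure and uniform trace hold by construction), the penultimate lemma (the right-strict distributive monoidal structure, resting on Lemma~\ref{lem:tensor-monoidal-structure} and ultimately Lemma~\ref{lem:polynomial-interchange}), and the observation that \textbf{[N1]} and \textbf{[N2]} make $\iota = (\z_{I,I}+\s_{I,I})\mu_N$ an isomorphism with inverse $\p_{I,I}$. Your identification of the trace cases of Lemma~\ref{lem:polynomial-interchange} as the one place where uniformity is indispensable also matches the paper's own commentary on the construction.
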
\qed

\end{document}